\newtheorem{theorem}{Theorem}[section]
\newtheorem{corollary}[theorem]{Corollary}
\newtheorem{proposition}[theorem]{Proposition}
\newtheorem{lemma}[theorem]{Lemma}
\newtheorem{example}[theorem]{Example}
\newtheorem{definition}[theorem]{Definition}
\newtheorem{remark}[theorem]{Remark}
\newtheorem{assumption}[theorem]{Assumption}
\def\substack#1#2{{\scriptstyle{#1}\atop\scriptstyle{#2}}}
\def\tilde{\widetilde}
\def\Bar{\overline}
\def\ra{\rangle}
\def\la{\langle}
\def\ox{\bar{x}}
\def\emp{\emptyset}
\def\ph{\varphi}
\def\dom{\mbox{\rm dom}\,}
\def\Bar{\overline}
 \title{
 Generalized Metric Subregularity with Applications to\\ High-Order Regularized Newton Methods}
 \author{Guoyin Li\footnote{University of New South Wales, Sydney 2052, Australia (g.li@unsw.edu.au). Research of this author was partly supported by the Australian Research Council under Discovery Project DP190100555.} \and Boris Mordukhovich\footnote{Wayne State University, Detroit, MI 48202, USA (aa1086@wayne.edu). Research of this author was partly supported by the  US National Science Foundation under grant DMS-2204519, by the Australian Research Council under Discovery Project DP190100555, and by Project~111 of China under grant D21024.}\and Jiangxing Zhu\footnote{Yunnan University, Kunming 650091, China
(jxzhu@ynu.edu.cn). The research of this author was partly supported by the Basic Research Program of Yunnan Province (Grant No.
202201AT070066), the Project for Young-notch Talents in the Ten Thousand Talent Program of Yunnan Province (Grant No. YNWR-QNBJ-2020-080), and the National Natural Science Foundation of People’s Republic of China (Grant Nos. 12171419 and 12261109).}}
\begin{document}
\maketitle\vspace*{-0.25in}
\begin{abstract}

\noindent This paper pursues a twofold goal. First, we introduce and study in detail a new notion of variational analysis called {\em generalized metric subregularity}, which is a far-going extension of the conventional metric subregularity conditions.  Our primary focus is on examining this concept concerning first-order and second-order stationary points. We develop an extended convergence framework that enables us to derive superlinear and quadratic convergence under the generalized metric subregularity condition, broadening the widely used KL convergence analysis framework. We present verifiable sufficient conditions to ensure the proposed generalized metric subregularity condition and provide examples demonstrating that the derived convergence rates are sharp. Second, we design a new high-order regularized Newton method with momentum steps, and apply the generalized metric subregularity to establish its superlinear convergence. Quadratic convergence is obtained under additional assumptions. Specifically, when applying the proposed method to solve the (nonconvex) over-parameterized compressed sensing model, we achieve global convergence with a quadratic local convergence rate towards a global minimizer under a strict complementarity condition. \\[1ex]
\noindent{\small {\bf Keywords}: Variational analysis and optimization, generalized metric subregularity, error bounds, high-order regularized Newton methods, Kurdyka-\L ojasiewicz property, superlinear and quadratic convergence\\[1ex]
{\bf Mathematics Subject Classification (2020)} 49J53, 49M15, 90C26}
\end{abstract}\vspace*{-0.15in}

\section{Introduction}\label{intro}\vspace*{-0.05in}

It has been well recognized, especially during the recent years, that concepts and tools of modern variational analysis play a crucial role in the design and justification of numerical algorithms in optimization and related areas. We mention, in particular, {\em generalized Newton methods} for which, in addition to the fundamental developments summarized in \cite{facc-pang,IS,kk}, new approaches and results have been recently proposed in \cite{AAFV,GO,kmpt,li-sun-toh,m24} among other publications. A common feature of these developments is the usage of {\em metric regularity/subregularity} assumptions in a certain form. Another common feature of the aforementioned publications is justifying superlinear convergence rates under some {\em nondegeneracy condition}. For example, \cite{np06} develops the cubic regularization (CR) method for minimizing a twice continuously differentiable function, which is now widely recognized as a globally convergent variant of Newton’s method with superior iteration complexity \cite{CGT11,np06}. The local quadratic convergence of CR method was derived under a nondegeneracy condition in \cite{np06}. In \cite{yzs}, the authors propose an {\em error bound} condition related to the second-order stationary set and examined the quadratic local convergence for the cubic regularized Newton method under this condition, extending the results in \cite{np06}. Interestingly, it was shown in \cite{yzs} that this error bound condition can be applied to nonconvex and degenerate cases being satisfied with overwhelming probability for some concrete nonconvex optimization problems that arise in phase retrieval and low-rank matrix recovery.

On the other hand, it is known that the metric subregularity of the subdifferential mapping has a close relationship with the {\em  Kurdyka-\L ojasiewicz $(KL)$ property}. The KL property is satisfied for various important classes of functions arising in diverse applications, and it has emerged as one of the most widely used tools for analyzing the convergence of important numerical algorithms \cite{ab09,abs}. In particular, if the potential function of the algorithm satisfies the KL property and the associated desingularization function $\vartheta$ involved in the KL  property takes the form that $\vartheta(t) = c \, t^{1-\theta}$ with $c>0$ and $\theta \in [0,1)$, then the corresponding exponent $\theta$ determines the linear or sublinear convergence rate of the underlying algorithms \cite{ab09,abs,lp,YLP22}. Although the KL property and its associated analysis framework are well-adopted in the literature,  this framework, in general, cannot obtain or detect superlinear or quadratic 
convergence rate, which is typical for many second-order numerical methods including Newton-type methods.

In this paper, we introduce and thoroughly investigate novel {\em generalized metric subregularity conditions}, significantly extending conventional metric subregularity and error bound conditions from \cite{yzs}. We develop an abstract extended convergence framework that enables us to derive superlinear and quadratic convergence towards specific target sets, such as first-order and second-order stationary points, under the introduced generalized metric subregularity condition. This new framework extends the widely used KL convergence analysis framework. We provide examples demonstrating that the derived quadratic convergence rates are sharp, meaning they can be attained. We also present verifiable sufficient conditions that ensure the proposed generalized metric subregularity condition and showcase that the sufficient conditions are satisfied for functions that arise in practical models including the over-parameterized compressed sensing, best rank-one matrix approximation, and generalized phase retrieval problems.  Finally, we use the generalized metric subregularity condition to establish {\em superlinear} (and in some cases {\em quadratic}) convergence of the newly proposed {\em high-order regularized Newton methods with momentum}. Specifically, when applying the proposed method to solve the (nonconvex) over-parameterized compressed sensing model, we achieve global convergence with a quadratic local convergence rate towards a global minimizer under a strict complementarity condition. \vspace*{0.03in}

{\bf Organization.}  The main achievements of the current research are summarized in the concluding Section~\ref{conc}. The rest of the paper is structured as follows.

Section~\ref{prel} presents basic definitions and {\em preliminaries} from variational analysis and generalized differentiation broadly used in what follows. In Section~\ref{sec:subreg}, we define two versions of {\em generalized metric subregularity} for subgradient mappings with respect to first-order and second-order stationary points and then discuss their relationships with known in the literature and with {\em growth conditions}.

Section~\ref{sec:abst} develops a new framework for {\em abstract convergence analysis} to establish superlinear and quadratic convergence rates of numerical algorithms under generalized metric subregularity. Being of their independent interest, the obtained results are instrumental to justify such fast convergent rates for our main high-order regularized Newtonian algorithm in the subsequent parts of the paper. In Section~\ref{sec:kl}, we provide some abstract convergence analysis under the {\em Kurdyka-\L ojasiewicz property}. 

Section~\ref{sec:2nd} revisits generalized metric subregularity while now concentrating on this property with respect to {\em second-order stationarity points}. Our special attention is paid to the {\em strict saddle point} and {\em weak separation} properties, which provide verifiable sufficient conditions for the fulfillment of generalized metric subregularity. Here we present several examples of {\em practical modeling}, where these properties are satisfied.

In Section~\ref{sec:alg}, we design the family of {\em high-order regularized Newton methods with momentum steps} for problems of ${\cal C}^2$ optimization, where the order of the method is determined by the H\"older exponent $q\in(0,1]$ of the Hessian matrix associated with the objective function. Using the machinery of generalized metric subregularity with respect to second-order stationary points leads us to establishing {\em superlinear convergence} of iterates with explicit convergence rates and also {\em quadratic convergence} in certain particular settings.

Section~\ref{conc} provides an overview of the major results obtained in the paper with discussing some directions of the future research. Section~\ref{appe} is an {\em appendix}, which contains the proofs of some technical lemmas.\vspace*{-0.15in}
 
\section{Preliminaries from Variational Analysis}\label{prel}\vspace*{-0.05in}

Let $\mathbb{R}^m$ be the $m$-dimensional Euclidean space with the inner product denoted by $\langle x,y \rangle =x^Ty$ for all $x,y \in \mathbb{R}^m$ and the norm $\|\cdot\|$. The symbols $B_{\mathbb{R}^m}(x,\gamma)$ and $B_{\mathbb{R}^m}[x,\gamma]$ stand for the open and closed ball centered at $x$ with radius $\gamma$, respectively. Given a symmetric matrix $M\in \mathbb{R}^{m\times m}$, the notation $M\succeq 0$ signifies that $M$ is positive-semidefinite. We use $\mathbb{N}$ to denote the set of nonnegative integrals $\{0,1,\ldots\}$. 

An extended-real-valued function $f\colon\mathbb{R}^m\to\Bar{\mathbb{R}}:=(-\infty,\infty]$ is {\em proper} if ${\rm dom}\,f:=\{x\in\mathbb{R}^m\;|\;f(x)<\infty\}\ne\emp$ for its effective domain. The (limiting, Mordukhovich) {\em subdifferential} for such functions $f$ at $\ox\in\dom f$ is 
\begin{equation}\label{sub}
\begin{array}{ll}
\partial f(\ox):=\Big\{v\in\mathbb{R}^m\;\Big|&\exists\,x_k\to\ox\;\mbox{ with }\;f(x_k)\to f(\ox)\;\mbox{ as }\;k\to\infty\;\mbox{ with}\\
&\displaystyle\liminf_{x\to x_k}\frac{f(x)-f(x_k)-\la v,x-x_k\ra}{\|x-x_k\|}\ge 0\;\mbox{ for all }\;k\in\mathbb N\Big\}.
\end{array}
\end{equation}
The subdifferential \eqref{sub} reduces to the subdifferential of convex analysis when $f$ is convex and to the gradient $\nabla f(\ox)$ when $f$ is continuously differentiable (${\cal C}^1$-smooth) around $\ox$. For the general class of lower semicontinuous (l.s.c.) functions, $\partial f$ enjoys comprehensive calculus rules and is used in numerous applications of variational analysis and optimization; see, e.g., the books \cite{bs06,m24,rw} and the references therein.

Having \eqref{sub}, consider the set of {\em first-order stationary points} of an l.s.c.\ function $f\colon\mathbb{R}^m\to\Bar{\mathbb{R}}$ defined by
\begin{equation}\label{1stat}
\Gamma:=\big\{x\in\mathbb R^m\;\big|\;0 \in \partial f(x)\big\}.
\end{equation}
Suppose that $f$ is a twice continuously differentiable (${\cal C}^2$-smooth) function and denote the collection of {\em second-order stationary points} of $f$ at $x\in\mathbb{R}^m$ by
\begin{equation}\label{2stat}
\Theta:=\big\{x\in\mathbb R^m\;\big|\;\nabla f(x)=0,\;\nabla^2 f(x)\succeq 0\big\}.
\end{equation}
As in \cite{zz16}, we say that $\varphi:\mathbb{R}_+\rightarrow\mathbb{R}_+$, where $\mathbb{R}_+:=\{t\;|\;t\ge 0\}$, is an {\em admissible function} if $\varphi(0)=0$ and $\varphi(t)\rightarrow0\Rightarrow t\rightarrow0$. It is well known that for convex admissible functions, the directional derivative 
$$
\varphi_+'(t):=\lim\limits_{s\rightarrow0^+}\frac{\varphi(t+s)-\varphi(t)}{s}
$$
always exists being nondecreasing on $\mathbb{R}_+$. Furthermore (see, e.g., \cite[Theorem 2.1.5]{za}), $\varphi_+'$ is increasing on $\mathbb{R}_+$ if and only if $\varphi$ is strictly convex, i.e.,
$$
\varphi(\lambda t_1+(1-\lambda)t_2)<\lambda \varphi(t_1)+(1-\lambda)\varphi(t_2)
$$
for any $\lambda\in(0,1)$ and $t_1,t_2\in\mathbb{R}_+$ with $t_1\neq t_2$.
It is also known that a convex function $\varphi$ is differentiable on $\mathbb{R}_+$ if and only if $\varphi_+'$ is continuous on $\mathbb{R}_+$.
The following lemma from \cite[Theorem~2.1.5]{za} is used below.

\begin{lemma}[\bf properties of convex admissible functions]\label{phid}
Let $\varphi:\mathbb{R}_+\to \mathbb{R}_+$ be a convex admissible function. Then we have the relationships:\\
{\bf(i)} $0<\frac{\varphi(t_1)}{t_1}\leq \varphi_+'(t_1)\leq\varphi_+'(t_2)\;\;{\rm for\;all}\;t_1,t_2\in(0,\;\infty)\;{\rm with}\;t_1\leq t_2.$\\
{\bf(ii)} $ \varphi(t)\geq \varphi_+'\left(\frac{t}{2}\right)\frac{t}{2}\;\mbox{ for all }\;\in (0,\infty).$
\end{lemma}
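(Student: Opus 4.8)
The plan is to obtain both inequalities from the standard one‑variable convexity toolkit — monotonicity of difference quotients, existence and monotonicity of the right derivative $\varphi_+'$, and the subgradient form of convexity — using the admissibility hypothesis only to upgrade a nonnegative quantity to a strictly positive one. (This is essentially the content of \cite[Theorem~2.1.5]{za}; I restate the argument for completeness.) The first thing I would record is the strict positivity $\varphi(t_1)>0$ for every $t_1>0$: if $\varphi(t_1)=0$ for some $t_1>0$, then the constant sequence $t_n\equiv t_1$ satisfies $\varphi(t_n)\to 0$ while $t_n\not\to 0$, contradicting admissibility; hence $\varphi(t_1)/t_1>0$.

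For (i), I would use that for a convex function the difference quotient $s\mapsto\frac{\varphi(s)-\varphi(s_0)}{s-s_0}$ is nondecreasing (the three‑point inequality). Applying this with $s_0=0$ and $\varphi(0)=0$, for any $s>t_1$ we get $\frac{\varphi(t_1)}{t_1}=\frac{\varphi(t_1)-\varphi(0)}{t_1}\le\frac{\varphi(s)-\varphi(t_1)}{s-t_1}$, and letting $s\downarrow t_1$ yields
\[
\frac{\varphi(t_1)}{t_1}\le\varphi_+'(t_1).
\]
The remaining inequality $\varphi_+'(t_1)\le\varphi_+'(t_2)$ for $0<t_1\le t_2$ is the classical monotonicity of the right derivative of a convex function, again a direct consequence of the monotonicity of difference quotients. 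Combining these with the positivity above gives the full chain in (i).

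For (ii), I would invoke the subgradient inequality in the form coming from the same monotone difference quotients: for $t\in(0,\infty)$ we have $t>t/2>0$, hence
\[
\varphi(t)-\varphi\!\left(\tfrac t2\right)\ge\varphi_+'\!\left(\tfrac t2\right)\!\left(t-\tfrac t2\right)=\varphi_+'\!\left(\tfrac t2\right)\frac t2.
\]
Since $\varphi(t/2)\ge 0$, this rearranges to $\varphi(t)\ge\varphi_+'(t/2)\,\frac t2$, which is exactly (ii).

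I do not expect a genuine obstacle here: every step is a textbook property of convex functions of one real variable, and admissibility is needed only for the strict inequality $\varphi(t_1)/t_1>0$ (convexity and nonnegativity alone give only $\ge 0$). The only point requiring care is keeping track of the direction of the inequalities when passing between chord slopes and one‑sided derivatives, i.e.\ using that $\varphi_+'(s_0)$ is the infimum of $\frac{\varphi(s)-\varphi(s_0)}{s-s_0}$ over $s>s_0$ and dominates every chord slope over an interval ending at $s_0$.
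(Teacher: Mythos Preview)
Your argument is correct. The paper does not give its own proof of this lemma; it simply attributes the statement to \cite[Theorem~2.1.5]{za} and moves on, so there is nothing to compare against beyond noting that your self-contained derivation via monotonicity of difference quotients and the subgradient inequality is exactly the standard route one would extract from that reference.
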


Finally in this section, we recall the celebrated property often used in convergence analysis.

\begin{definition}[\bf Kurdyka-{\L}ojasiewicz  property]\label{de1}
{\rm Let $f:\mathbb{R}^m\to\Bar{\mathbb{R}}$ be a proper lower l.s.c.\ function, and let $\vartheta:[0, \eta)\to \mathbb{R}_+$ be a continuous concave function. We say that $f$ has the {\em Kurdyka-{\L}ojasiewicz $(KL)$ property} at $\overline{x}$ with respect to $\vartheta$ if the following conditions hold:\\
{\bf(i)} $\vartheta(0)=0$ and $\vartheta$ is continuously differentiable on $(0, \eta)$.\\
{\bf(ii)} $\vartheta'(s)>0$ for all $s\in (0, \eta)$.\\
{\bf(iii)} There exists $\varepsilon>0$  such that
\begin{equation*}
\vartheta'(f(x)-f(\overline{x}))d(0, \partial f(x))\geq 1
\end{equation*}
for all $x\in B_{\mathbb{R}^m}(\overline{x}, \varepsilon)\cap [f(\overline{x})<f<f(\overline{x})+\eta]$, where $d(\cdot,S)$ stands for the {\em distance function} associated with the set $S$. If in addition $\vartheta$ takes the form of $\vartheta(t)=c \,  t^{1-\theta}$ for some $c>0$ and $\theta\in[0,1)$, then we say $f$ satisfies the {\em KL property} at $\bar x$ with the {\em KL exponent $\theta$}.} 
\end{definition}

A widely used class of functions satisfying the KL property consists of {\em subanalytic functions}.  It is known that the maximum and minimum of finitely many analytic functions and also semialgebraic functions (such as $\|x\|_p=[\sum_{i=1}^m |x_i|^p]^{\frac{1}{p}}$ with $p$ being a nonnegative rational number) are subanalytic. We refer the reader to \cite{ab09,abs,BM,lp} for more details on the KL property and its exponent version, as well as on subanalytic functions.\vspace*{-0.1in}  

\section{Generalized Metric Subregularity}\label{sec:subreg}
\setcounter{equation}{0}\vspace*{-0.05in}

The main variational concept for the subdifferential \eqref{sub} of an extended-real-valued function used in this paper is introduced below, where we consider its two versions.

\begin{definition}[\bf generalized metric subregularity]\label{def3.1}
{\em Let $f:\mathbb{R}^m\to\Bar{\mathbb{R}}$ be a proper l.s.c.\ function, let $\Gamma$ be taken from \eqref{1stat}, and let $\psi:\mathbb{R}_+\to \mathbb{R}_+$ be an admissible function. Given a target set $\Xi \subset \Gamma$ and a point  $\overline{x}\in \Xi$, we say that:\\
{\bf(i)} The subdifferential  $\partial f$ satisfies the ({\rm pointwise}) {\em generalized metric subregularity property}  with respect to $(\psi, \Xi)$  at 
$\overline{x}$ if there exist $\kappa,\delta\in (0,\infty)$ such that
\begin{equation}\label{1ms}
\psi\big( d(x,\Xi)\big) \leq \kappa \, d\big(0, \partial f(x)\big)\;\mbox{ for all }\;x\in B_{\mathbb{R}^m}(\overline{x}, \delta).
\end{equation}
{\bf(ii)} The subdifferential $\partial f$ satisfies the {\em uniform generalized metric subregularity property} with respect to $(\psi,\Xi)$ if there exist $ \kappa, \rho\in (0,\infty)$ such that
\begin{equation}\label{2ms}
\psi\big( d(x,\Xi)\big) \leq  \kappa \, d\big(0, \partial f(x)\big)\;\mbox{ for all }\;x\in \mathcal{N}(\Xi, \rho):=\big\{x\in \mathbb{R}^m\;\big|\;\;d(x,\Xi )\le\rho\big\}.
\end{equation}
{\bf(iii)} If $\psi(t)=t$ in {\rm(i)} and {\rm(ii)}, then we simply say that $\partial f$ satisfies the {\em metric subregularity} (resp.\ {\em uniform metric subregularity}) property with respect to $\Xi$. }
\end{definition}

\begin{remark}[\bf connections with related notions]\label{rem1} {\rm Let us briefly discuss connections of the generalized metric subregularity notions from Definition~{\rm\ref{def3.1}} with some related notions.

$\bullet$ Consider the case where the target set $\Xi:=\Gamma$ is the set of {\em first-order stationary points} \eqref{1stat}. If $\psi(t)=t$, then \eqref{1ms} reduces to the conventional notion of {\em metric subregularity} applied to subgradient mappings. If $\psi(t)=t^p$ for $p>0$, which we referred to as {\em exponent metric subregularity}, this notion has also received a lot of attention due to the important applications in numerical optimization. It is called the  {\em H\"{o}lder metric subregularity} if $p>1$, and the {\em high-order metric subregularity} if $p \in (0,1)$. Other cases of convex and smooth admissible functions are considered in \cite{zz16}. The refer the reader to \cite{AG14,G,LM, MOU,myzz,ZNZ,zz16} and the bibliographies therein for more details.

$\bullet$ In the case where $f$ is a ${\cal C}^2$-smooth function, $\psi(t)=t$, and the target set $\Xi=\Theta$ consists of {\em second-order stationary points} \eqref{2stat}, Definition~\ref{def3.1}(ii) takes the form: there exist $ \kappa, \rho\in (0,\infty)$ such that
\begin{equation}\label{eb}
d(x,\Theta) \leq  \kappa \|\nabla f(x)\|\;\mbox{ for all }\;x\in \mathcal{N}(\Theta, \rho):=\big\{x\in \mathbb{R}^m\;\big|\;\;d(x, \Theta)\leq \rho\big\},
\end{equation}
i.e., it falls into the framework of the {\em uniform} generalized metric subregularity \eqref{2ms} for smooth functions. In this case, it was referred to as the {\em error bound} (EB) {\em condition} \cite[Assumption 2]{yzs}. This condition plays a key role in \cite{yzs} to justify the quadratic convergence of the cubic regularized Newton method.}
\end{remark}

Now we present three examples illustrating several remarkable features of the introduced notions. The first example shows that the generalized metric subregularity from \eqref{1ms} can go beyond the exponent cases. Note that the usage of metric subregularity in the nonexponent setting has been recently identified in \cite{llp} for the case of exponential cone programming.  

\begin{example}[\bf nonexponent generalized  metric subregularity]\label{exa:nonexp}
{\em Let $\varphi:\mathbb{R}_+\to \mathbb{R}_+$ be given by
\begin{equation*}
\varphi(t):=\left\{\begin{array}{ll}
\int_0^t e^{-\frac{1}{s}}ds, & \hbox{$t>0$,} \\
0, & \hbox{$t=0$}.
\end{array}\right.
\end{equation*}
Define $f:\mathbb{R} \to \mathbb{R}$ by $f(x):=\varphi(|x|)$ for all $x \in \mathbb{R}$. It is easy to see that $\varphi$ is a differentiable convex function on $\mathbb{R}_+$, and that we have
\begin{equation*}
\partial f(x)=\left\{
\begin{array}{ll}
\varphi'(|x|)\frac{x}{|x|}, & \hbox{$x\neq 0$,} \\
0, & \hbox{$x$}=0
\end{array}
\right.
\quad \mbox{and}\quad  \mathop{\arg\min}_{x\in \mathbb{R}}f(x)=\Gamma=\{0\}.
\end{equation*}
This tells us that
$\varphi'(d(x,  \Gamma))=\varphi'(|x|)=d(0, \partial f(x))$ for all $x \in \mathbb{R}$, and so $\partial f$ has the generalized metric subregularity property with respect to $(\varphi',\Gamma)$. On the other hand, for any $q\in (0,\infty)$ we get
\begin{equation*}
\lim_{x\to 0}\frac{d(x,\Gamma)^q}{d(0, \partial f(x))}=\lim_{x \to 0}\frac{|x|^q}{e^{-\frac{1}{|x|}}}=\lim_{t\to 0^+}e^\frac{1}{t}t^{q}=\infty,
\end{equation*}
which shows that $\partial f$ does not enjoy any exponent-type metric subregularity.}
\end{example}

The next example illustrates the fulfillment of the generalized metric subregularity \eqref{2ms} with respect of $(\psi,\Theta))$ for some admissible function $\psi$, while the error bound  condition \eqref{eb} from \cite{yzs} fails.

\begin{example}[\bf generalized metric subregularity vs.\ error bound condition]\label{ex:eb} {\rm Given $p>2$, define a ${\cal C}^2$-smooth function $f:\mathbb{R}\to \mathbb{R}$ by
\begin{equation*}
 f(x):=\left\{
\begin{array}{ll}
x^p, & \hbox{$x\geq 0$,} \\
0, & \hbox{$x<0$}
\end{array}
\right.
\end{equation*}
for which we get by the direct calculations that
\begin{equation*}
\nabla f(x)=\left\{
\begin{array}{ll}
px^{p-1}, & \hbox{$x\geq 0$,} \\
0, & \hbox{$x<0$,}
\end{array}
\right. \; \nabla^2 f(x)=\left\{
\begin{array}{ll}
p(p-1)x^{p-2}, & \hbox{$x\geq 0$,} \\
0, & \hbox{$x<0$},
\end{array}
\right.\; \mbox{and}\;\; \Theta=\Gamma=(-\infty, 0].
\end{equation*}
It is easy to see that  $ d(x, \Theta)=x$ and $d(0, \nabla f(x))=px^{p-1}$ for $x>0$, while for $x \le 0$ we have $ d(x, \Theta)=d(0, \nabla f(x))=0$. This implies therefore that
\begin{equation}\nonumber
\limsup_{x\to 0^+}\frac{d(x, \Theta)}{d(0, \nabla f(x))}=\infty\quad \mbox{and}\quad d (x, \Theta)^{p-1}\leq \frac{1}{p} d(0, \nabla f(x))\quad\mbox{whenever }\;x\in \mathbb{R}.
\end{equation}
Consequently, the EB condition \eqref{eb} fails, while $\nabla f$ enjoys the generalized metric subregularity property  with respect to $(\psi, \Theta)$  at $0$, where $\psi(t):=t^{p-1}$.}
\end{example}

For a fixed admissible function $\psi$ and a given set $\Xi \subset \Gamma$, we observe directly from the definitions that uniform generalized  metric subregularity with respect to $(\psi,\Xi)$ yields the generalized metric subregularity at each point $\overline{x}$ of $\Xi$. The following simple one-dimensional example shows that the reverse implication fails.

\begin{example}[{\bf pointwise version is strictly weaker than uniform version for generalized  metric subregularity}]\label{example:3.3}
{\rm Consider $f(x):=(x-a)^{2}(x-b)^{2p}$ for $p>1$ with $a,b \in \mathbb{R}$ and $0 \le a<b$. We get $\nabla f(x)=f'(x)=2(x-a)(x-b)^{2p}+2p (x-a)^2(x-b)^{2p-1}$ and $f''(x)=2(x-b)^{2p}+8p(x-a)(x-b)^{2p-1}+2p(2p-1)(x-a)^2(x-b)^{2p-2}$. Furthermore, $\Gamma=\{a,b,\frac{b+pa}{1+p}\}$ and $\Theta=\{a,b\}$. Thus there exists $M>0$ such that
$d(x,\Theta)=|x-a| \le M d(0, \nabla f (x))$ for all $x \in B_{\mathbb{R}}(a,\epsilon)$ with $\epsilon<\frac{b-a}{4}$. This tells us that $\nabla f$ satisfies the generalized  metric subregularity property with respect to $(\psi,\Xi)$ at $\overline{x}=a$ with $\psi(t)=t$ and $\Xi=\Theta$.

On the other hand, $\nabla f$ does not satisfy the uniform generalized metric subregularity with respect to $(\psi,\Xi)$. Indeed, for any $\epsilon_k>0$ with $\epsilon_k \rightarrow 0$, we get $x_k=b+\epsilon_k \in \mathcal{N}(\Theta,\epsilon_k)$ and
$\frac{d(x_k,\Theta)}{\|\nabla f(x_k)\|} = O(\frac{\epsilon_k}{\epsilon_k^{2p-1}}) \rightarrow \infty$.}
\end{example}

It has been well recognized in variational analysis that there exist close relationships between metric subregularity of subgradient mappings and {\em second-order growth conditions} for the functions in question; see \cite{AG14,dmn,m24} with more details. We now extend  such relationships to the case of generalized metric subregularity. The following useful lemma is of its own interest.

{\begin{lemma}[\bf first-order stationary points of compositions] \label{LE2.1}
Let $f(x)=(\varphi\circ g)(x)$, where $\varphi: \mathbb{R}^n \to \Bar{\mathbb{R}}$ is a proper l.s.c.\ convex function, and where $g : \mathbb{R}^m \to \mathbb{R}^n$ is a ${\cal C}^1$-smooth mapping  with the surjective derivative $\nabla g(\overline{x})$ at some first-order stationary point $\overline{x} \in\Gamma$ associated with the composition $f$ in \eqref{1stat}. Then there exists $\delta>0$ such that we have the inclusion
\begin{equation}\label{217}
\Gamma\cap B_{\mathbb{R}^m}(\overline{x}, \delta)\subseteq \mathop{\arg\min}_{x\in \mathbb{R}^m}f(x).
\end{equation}
\end{lemma}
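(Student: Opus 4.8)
The plan is to exploit the surjectivity of $\nabla g(\ox)$ to pass the first-order stationarity condition $0\in\partial f(x)$ through the chain rule, and then use convexity of $\varphi$ to upgrade stationarity to global minimality of $f$. First I would invoke the surjectivity of $\nabla g(\ox)$, which is an open condition: there exists $\delta_0>0$ such that $\nabla g(x)$ remains surjective for all $x\in B_{\mathbb{R}^m}(\ox,\delta_0)$ (this uses that $g$ is ${\cal C}^1$ and that the set of surjective linear maps is open). On this ball, the subdifferential chain rule for the composition of a convex l.s.c.\ function with a ${\cal C}^1$ mapping having surjective derivative applies exactly, giving $\partial f(x)=\nabla g(x)^*\partial\varphi(g(x))$ for all $x\in B_{\mathbb{R}^m}(\ox,\delta_0)$. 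Hence for $x\in\Gamma\cap B_{\mathbb{R}^m}(\ox,\delta_0)$, the inclusion $0\in\partial f(x)=\nabla g(x)^*\partial\varphi(g(x))$ means there is some $v\in\partial\varphi(g(x))$ with $\nabla g(x)^*v=0$; since $\nabla g(x)$ is surjective, $\nabla g(x)^*$ is injective, so $v=0$, i.e.\ $0\in\partial\varphi(g(x))$.

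Next, since $\varphi$ is convex, $0\in\partial\varphi(g(x))$ is equivalent to $g(x)$ being a global minimizer of $\varphi$ over $\mathbb{R}^n$, so $\varphi(g(x))=\min_{y\in\mathbb{R}^n}\varphi(y)=:\mu$. The same argument applied at $\ox\in\Gamma$ (note $\ox\in B_{\mathbb{R}^m}(\ox,\delta_0)$) shows $\varphi(g(\ox))=\mu$ as well. Therefore every point $x\in\Gamma\cap B_{\mathbb{R}^m}(\ox,\delta_0)$ satisfies $f(x)=\varphi(g(x))=\mu$, while for arbitrary $z\in\mathbb{R}^m$ we have $f(z)=\varphi(g(z))\ge\mu=f(x)$. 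This is precisely the statement that $x\in\arg\min_{z\in\mathbb{R}^m}f(z)$, and taking $\delta:=\delta_0$ yields the desired inclusion \eqref{217}.

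The one technical point that needs care — and I would regard it as the main (minor) obstacle — is the precise hypotheses under which the chain rule $\partial(\varphi\circ g)(x)=\nabla g(x)^*\partial\varphi(g(x))$ holds as an \emph{equality} of limiting subdifferentials. The standard result (see \cite{rw}, or the subdifferential calculus in \cite{m24}) requires $g$ to be ${\cal C}^1$ near $x$, $\varphi$ to be l.s.c., and a qualification condition that is automatically satisfied when $\nabla g(x)$ is surjective (the only vector $v$ with $\nabla g(x)^*v=0$ in the relevant normal/horizontal cone is $v=0$); convexity of $\varphi$ is not even needed for the equality, only for the final passage from stationarity to global minimality. One should also confirm that $g(x)\in\dom\varphi$ along the way so that $\partial\varphi(g(x))$ is meaningful, which follows from $f(x)=\varphi(g(x))<\infty$ for $x\in\Gamma$. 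Apart from assembling these standard facts and fixing $\delta$ small enough for both the openness of surjectivity and the domain of the chain rule, the argument is a short direct deduction.
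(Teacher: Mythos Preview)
Your proposal is correct and follows essentially the same route as the paper: use stability of surjectivity of $\nabla g$ near $\ox$, apply the exact chain rule $\partial f(x)=\nabla g(x)^*\partial\varphi(g(x))$, deduce from injectivity of $\nabla g(x)^*$ that $0\in\partial\varphi(g(x))$, and invoke convexity of $\varphi$ to conclude global minimality. The only cosmetic difference is that the paper obtains injectivity via the quantitative Lyusternik--Graves bound $\|\nabla g(x)^Tv\|\ge L\|v\|$ (which it reuses in the subsequent theorem), whereas you argue directly that surjective $\Rightarrow$ adjoint injective.
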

\begin{proof}
The surjectivity of $\nabla g(\overline{x})$ for smooth $g$ allows us to deduce from the Lyusternik–Graves theorem (see, e.g., \cite[Theorem~1.57]{bs06}) the existence of $\delta, L>0$ such that $\nabla g(x)$ is surjective on $B_{\mathbb{R}^m}(\overline{x}, \delta)$ and
\begin{equation}\label{l2.10}
\|\nabla g(x)^T v\| \ge L\|v\|\quad\mbox{whenever }\;(x,v)\in B_{\mathbb{R}^m}(\overline{x},\delta)\times \mathbb{R}^n.
\end{equation}
Consequently, it follows from \cite[Proposition~1.112]{bs06} that
\begin{equation}\label{3.58}
\partial f(x)=\nabla g(x)^T\partial \varphi(g(x))\quad\mbox{for all }\;x\in B_{\mathbb{R}^m}(\overline{x}, \delta).
\end{equation}
Picking any $u\in \Gamma\cap B_{\mathbb{R}^m}(\overline{x}, \delta)$, we deduce from \eqref{3.58} that
\begin{equation}\nonumber
0\in \partial f(u)=\nabla g(u)^T\partial \varphi(g(u)).
\end{equation}
Combining this with \eqref{l2.10} yields $0\in \partial \varphi(g(u))$. Hence $g(u)$ is a minimizer of the convex function $\varphi$, and 
\begin{equation}\nonumber
f(u)=\varphi(g(u))\leq \varphi(g(x))=f(x)\;\mbox{ for all}\;x\in \mathbb{R}^m,
\end{equation}
which gives us \eqref{217} and thus completes the proof.
\end{proof}}

Now we are ready to establish relationships between the generalized metric subregularity and corresponding growth conditions extending the known ones \cite{AG14,dmn} when $\psi(t):=t^2$, $\overline{\psi}(t):=t$ and 
$\Xi=\Gamma$. 

\begin{theorem}[\bf generalized metric subregularity via growth conditions]\label{thm:gro} Let $f(x)=(\varphi\circ g)(x)$ in the setting of Lemma~{\rm\ref{LE2.1}}. Consider the following statements:\\
{\bf (i) } There exists an an admissible function $\psi:\mathbb{R}_+\to \mathbb{R}_+$ and constants $\gamma_1, \kappa_1 \in (0,\infty)$  such that
\begin{equation}\label{210}
f(x)\geq f(\overline{x})+\kappa_1 {\psi}( d(x, \Xi))\quad\mbox{for all }\;x\in B_{\mathbb{R}^m}(\overline{x}, \gamma_1).
\end{equation}
{\bf(ii)} There exists an an admissible function $\overline \psi:\mathbb{R}_+\to \mathbb{R}_+$ and constants $\gamma_2, \kappa_2\in (0,\infty)$ such that
\begin{equation}\nonumber
\overline \psi(d(x,\Xi)) \leq  \kappa_2 d(0, \partial f(x))\quad\mbox{for all x}\;\in B_{\mathbb{R}^m}(\overline{x},\gamma_2).
\end{equation}
Then implication ${\rm(i)}\Rightarrow{\rm(ii)}$ holds with $\overline\psi(t):= {\psi}_+'(\frac{t}{2})$ if $\psi$ is convex and  $\lim_{t\to 0^+}\psi_+'(t)=\psi_+'(0)=0$, while the reverse one ${\rm(ii)}\Rightarrow{\rm(i)}$ is always satisfied with $\psi(t):= \int_0^t \,  \overline{\psi}(s)ds$.
\end{theorem}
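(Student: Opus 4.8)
The plan is to handle the two implications separately, using Lemma~\ref{LE2.1} to pass between the composite structure and a genuine minimization problem, and then exploiting the convexity properties of $\psi$ (respectively the integral construction) together with Lemma~\ref{phid}.

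For ${\rm(i)}\Rightarrow{\rm(ii)}$: First I would shrink $\gamma_1$ so that it is at most the $\delta$ from Lemma~\ref{LE2.1}, so that on $B_{\mathbb{R}^m}(\overline{x},\gamma_1)$ the stationary points of $f$ are actual minimizers and the chain rule $\partial f(x)=\nabla g(x)^T\partial\varphi(g(x))$ holds with the uniform lower bound \eqref{l2.10}. Pick $x$ near $\overline x$; if $d(0,\partial f(x))$ is large there is nothing to prove, so assume it is small, take $v\in\partial\varphi(g(x))$ realizing (up to a factor) the distance $d(0,\partial f(x))$ via $\|\nabla g(x)^Tv\|\ge L\|v\|$, and use convexity of $\varphi$ in the form $\varphi(g(\overline x))\ge\varphi(g(x))+\langle v,g(\overline x)-g(x)\rangle$, i.e. $f(\overline x)\ge f(x)-\|v\|\,\|g(\overline x)-g(x)\|$. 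Combining with the growth bound \eqref{210}, $\kappa_1\psi(d(x,\Xi))\le f(x)-f(\overline x)\le\|v\|\,\|g(x)-g(\overline x)\|\le \frac1L d(0,\partial f(x))\cdot C\,d(x,\overline x)$ for a Lipschitz constant $C$ of $g$ near $\overline x$. Now I need to convert this into a bound of the form $\psi_+'(d(x,\Xi)/2)\le\kappa_2 d(0,\partial f(x))$. The device is Lemma~\ref{phid}(ii): $\psi(t)\ge\psi_+'(t/2)\,(t/2)$, which turns the left side into $\frac{\kappa_1}{2}\psi_+'(d(x,\Xi)/2)\,d(x,\Xi)$; after cancelling a factor comparable to $d(x,\Xi)$ (here one must argue $d(x,\Xi)\le C'\,d(x,\overline x)$, which holds trivially since $\overline x\in\Xi$) one is left with $\psi_+'(d(x,\Xi)/2)\le\kappa_2 d(0,\partial f(x))$, as desired. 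The admissibility of $\overline\psi=\psi_+'(\cdot/2)$ — that $\overline\psi(0)=0$ and $\overline\psi(t)\to0\Rightarrow t\to0$ — follows from the hypotheses $\lim_{t\to0^+}\psi_+'(t)=\psi_+'(0)=0$ together with monotonicity of $\psi_+'$ from Lemma~\ref{phid}(i).

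For ${\rm(ii)}\Rightarrow{\rm(i)}$: Set $\psi(t):=\int_0^t\overline\psi(s)\,ds$; since $\overline\psi$ is admissible this $\psi$ is nonnegative, nondecreasing, $\psi(0)=0$, and $\psi(t)\to0$ forces $t\to0$ (because $\overline\psi>0$ on an interval $(0,\eta)$ implies $\psi$ is strictly increasing there), so $\psi$ is admissible. The idea is an integration-along-a-path argument: for $x$ near $\overline x$, connect $x$ to its projection $\Pi(x)$ onto the closed set $\Xi$ (or, more robustly, to a point of $\Xi$ nearly realizing $d(x,\Xi)$) and estimate $f(x)-f(\overline x)$. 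Using Lemma~\ref{LE2.1} again, $f(\overline x)$ equals the minimum value, and in particular $f$ is constant on $\Xi\cap B(\overline x,\delta)$ equal to $f(\overline x)$; so it suffices to show $f(x)-f(\Pi(x))\ge\kappa_1\psi(d(x,\Xi))$. Along the segment $t\mapsto x_t$ from $\Pi(x)$ to $x$, the function $t\mapsto f(x_t)$ is locally Lipschitz, and at a.e.\ $t$ its derivative is bounded below in absolute value in terms of $d(0,\partial f(x_t))\ge\overline\psi(d(x_t,\Xi))/\kappa_2$ (using the Clarke-type mean value / chain rule for the composite, again valid on the ball by the surjectivity setup), while $d(x_t,\Xi)$ grows at least linearly in the arclength from $\Pi(x)$; integrating $\overline\psi$ against this lower bound and recognizing the integral as $\psi(d(x,\Xi))$ up to a constant gives \eqref{210}.

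\textbf{Main obstacle.} The delicate point is the second implication: making the ``integrate the gradient lower bound along a path'' argument rigorous in the nonsmooth composite setting. One must choose the path so that $d(x_t,\Xi)$ is controlled from below (a straight segment to a near-projection point works, giving $d(x_t,\Xi)\ge$ fraction of arclength by the triangle inequality), ensure the path stays in the ball where \eqref{l2.10} and the subdifferential chain rule hold, and invoke a subgradient mean value inequality of the form $f(x)-f(\Pi(x))\ge\int_0^1 d(0,\partial f(x_t))\,\|x-\Pi(x)\|\,dt$ minus lower-order terms — which for the composition $\varphi\circ g$ with $\varphi$ convex is cleanest to get by working with $v_t\in\partial\varphi(g(x_t))$ and the convexity inequality for $\varphi$ rather than a general nonsmooth mean value theorem. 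Handling the endpoint behaviour near $\Xi$ (where both sides vanish) and the substitution turning $\int\overline\psi(\text{arclength})$ into $\psi(d(x,\Xi))$ are the places where constants must be tracked carefully; everything else is routine.
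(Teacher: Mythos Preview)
Your argument for ${\rm(i)}\Rightarrow{\rm(ii)}$ contains a genuine gap at the cancellation step. After the convexity inequality for $\varphi$ applied at the pair $(g(x),g(\overline{x}))$ you obtain
\[
\kappa_1\,\psi\big(d(x,\Xi)\big)\le f(x)-f(\overline{x})\le \tfrac{C}{L}\, d\big(0,\partial f(x)\big)\,\|x-\overline{x}\|,
\]
and then, via Lemma~\ref{phid}(ii),
\[
\tfrac{\kappa_1}{2}\,\psi_+'\!\big(\tfrac{d(x,\Xi)}{2}\big)\,d(x,\Xi)\le \tfrac{C}{L}\, d\big(0,\partial f(x)\big)\,\|x-\overline{x}\|.
\]
To conclude $\psi_+'(d(x,\Xi)/2)\le \kappa_2\,d(0,\partial f(x))$ you need $\|x-\overline{x}\|/d(x,\Xi)$ bounded, i.e.\ $\|x-\overline{x}\|\le C'\,d(x,\Xi)$. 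What you invoke is the opposite (and trivial) inequality $d(x,\Xi)\le\|x-\overline{x}\|$, which is useless here: the ratio $\|x-\overline{x}\|/d(x,\Xi)$ is unbounded on $B_{\mathbb{R}^m}(\overline{x},\gamma_2)$ (take $x$ close to $\Xi$ but at fixed positive distance from $\overline{x}$). The paper repairs this by comparing not with $\overline{x}$ but with a sequence $x_k\in\Xi$ satisfying $\|x-x_k\|\to d(x,\Xi)$; one checks $x_k\in\Xi\cap B_{\mathbb{R}^m}(\overline{x},\delta)$, so by Lemma~\ref{LE2.1} each $x_k$ is a global minimizer and $f(x_k)=f(\overline{x})$. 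Running your same convexity estimate with $x_k$ in place of $\overline{x}$ and passing to the limit yields $f(x)-f(\overline{x})\le \ell\,d(0,\partial f(x))\,d(x,\Xi)$ with the correct factor $d(x,\Xi)$, after which Lemma~\ref{phid}(ii) finishes exactly as you wrote.

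For ${\rm(ii)}\Rightarrow{\rm(i)}$ the paper does not give a self-contained argument but invokes Lemma~\ref{LE2.1} together with \cite[Theorem~4.2]{yzz}. Your path-integration sketch has its own problem: a lower bound on $d(0,\partial f(x_t))$ along the segment does \emph{not} yield a lower bound on $\frac{d}{dt}f(x_t)$, since the (sub)gradient may be nearly orthogonal to $x-\Pi(x)$. The inequality you write, $f(x)-f(\Pi(x))\ge\int_0^1 d(0,\partial f(x_t))\,\|x-\Pi(x)\|\,dt$, is false in general and is not salvaged by the convexity of $\varphi$ alone. The standard routes are either a descent/gradient-flow argument from $x$ down to the level $[f\le f(\overline{x})]$ (where the error bound controls the length of the trajectory), or the direct result cited from \cite{yzz}; your straight-line-to-$\Pi(x)$ idea would need a different mechanism to control the directional derivative from below.
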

\begin{proof}  To verify ${\rm(i)}\Rightarrow{\rm(ii)}$, we proceed as in the proof of Lemma~\ref{LE2.1} and find $\delta, L>0$ such that \eqref{217}, \eqref{l2.10}, and \eqref{3.58} hold. Letting $\tilde \delta:=\min\{\gamma_1, \frac{\delta}{2}\}$ and then picking any $x\in B_{\mathbb{R}^m}(\overline{x}, \tilde \delta)$ and $u\in \partial f(x)$, take a sequence $\{x_k\}\subset \Xi$ such that
$\|x-x_k\|\to d(x, \Xi)$ as $k\to\infty$. Noting that $\overline{x}\in \Xi$ and $\|x-\overline{x}\|<\tilde \delta \leq \frac{\delta}{2}$, assume without loss of generality that $\{x_k\}$ entirely lies in $\Xi\cap B_{\mathbb{R}^m}(\overline{x}, 2\tilde \delta)$. Select further $L_0>0$ with
$ \|g(x_2)-g(x_1)\|\leq L_0\|x_1-x_2\|$ for all $x_1, x_2\in B_{\mathbb{R}^m}(\overline{x}, \delta)$ and show that
\begin{equation}\label{3.60}
f(x_{k})\geq f(x)-\frac{L_0}{L}\|u\|\cdot \|x_{k}-x\|\;\mbox{ for all }\;k\in\mathbb N
\end{equation}
whenever $u\in\partial f(x)$. Indeed, it follows from \eqref{l2.10} and \eqref{3.58} that for any $u\in\partial f(x)$ there is $v\in \partial \varphi(g(x))$ with
\begin{equation*}
 u=\nabla g(x)^T v\quad \mbox{and}\quad L\|v\|\le\|u\|.
\end{equation*}
Combining the latter with the convexity of $\varphi$ gives us
\begin{equation*}
\begin{aligned}
&f(x)-f(x_k)=\varphi(g(x))-\varphi(g(x_k))\le\langle v, g(x)-g(x_k)\rangle\\
&\le\|v\| \cdot \|g(x)-g(x_k)\|\le L_0\|v\|\cdot \|x-x_k\|\le\frac{L_0}{L}\|u\|\cdot\|x-x_k\|
\end{aligned}
\end{equation*}
for all $k\in\mathbb N$, and thus brings us to \eqref{3.60}. Note further that \eqref{217} ensures that $f(x_k)=f(\overline{x})$ for all $k\in\mathbb N$. Letting there $k\to\infty$ and denoting $\ell:=\frac{L_0}{L}$, we get $f(x)-f(\overline{x})\leq \ell\|u\| d(x, \Xi)$, which yields in turn 
\begin{equation*}
f(x)-f(\overline{x})\leq \ell d\left(0, \partial f(x)\right) d(x, \Xi)\;\mbox{ whenever }\;x\in B_{\mathbb{R}^m}(\overline{x}, \tilde \delta).
\end{equation*}
This allows us to derive from \eqref{210} that
\begin{equation*}
0<\frac{\kappa_1}{\ell}\frac{ \psi \left( d(x, \Xi)\right)}{ d(x, \Xi)}\leq d\left(0, \partial f(x)\right)\;\mbox{ for all }\;x\in B_{\mathbb{R}^m}\left(\overline{x}, \tilde\delta\right)\setminus {\rm cl}(\Xi),
\end{equation*}
where ${\rm cl}(\Xi)$ stands the closure of $\Xi$. Employing 
Lemma~\ref{phid}(ii) tells us that
\begin{equation*}
\frac{\kappa_1}{2\ell}\psi_+'\left(\frac{d(x, \Xi)}{2}\right)\leq d\left(0, \partial f(x)\right)
\end{equation*}
for any $x\in B_{\mathbb{R}^m}\left(\overline{x}, \tilde \delta\right)\setminus {\rm cl}(\Xi)$, and thus it follows from  $\psi_+'(0)=0$ that (ii) is satisfied. The reverse implication (ii)$\Rightarrow$(i) is a consequence of 
Lemma~\ref{LE2.1} and \cite[Theorem~4.2]{yzz}.
\end{proof}\vspace*{-0.2in}

\section{Abstract Framework for Convergence Analysis} 
\setcounter{equation}{0}\label{sec:abst}\vspace*{-0.05in}

In this section, we develop a general framework for convergence analysis of an abstract minimization algorithm to ensure its {\it superlinear} and {\em quadratic convergence}. This means finding appropriate conditions on an abstract algorithm structure and the given objective function under which the generated iterates exhibit the desired fast convergence. We'll see that the generalized metric subregularity is an important condition to establish fast convergence. The obtained abstract results are implemented in Section~\ref{sec:alg} to derive specific convergence conditions for the newly proposed high-order regularized Newton method with momentum. \vspace*{0.03in}

Our study in this section focuses on a coupled sequence $\{(x_k,e_k)\} \subset \mathbb{R}^m \times \mathbb{R}_+$, where $\{x_k\}$ is the main sequence generated by the algorithm, and where $\{e_k\}$ is an auxiliary numerical sequence that serves as a surrogate for $\|x_{k+1}-x_k\|$ (the magnitude of the successive change sequence). Throughout this section, we assume that these two sequences satisfy the following set of {\bf basic assumptions (BA)}:
\begin{itemize}
\item[{\bf(i)}] {\em Descent condition}:
\begin{equation}\tag{${\rm H0}$}
 f(x_{k+1})\le f(x_k)\quad\mbox{ for all }\;k\in \mathbb{N} \label{H1}.
\end{equation}

\item[{\bf(ii)}] {\em Surrogate condition}: there exists $c>0$ such that
\begin{equation}\tag{${\rm H1}$}
\| x_{k+1}-x_k\|\leq c \, e_k\;\mbox{ for all }\;k\in \mathbb{N}\;\mbox{ with }\;\lim_{k\to \infty}e_k=0. \label{H4}
\end{equation}

\item[{\bf(iii)}] {\em  Relative error condition}:
\begin{equation}\tag{${\rm H2}$}
\mbox{there exists }\;w_{k+1}\in \partial f(x_{k+1})\;\mbox{ such that }\;\|w_{k+1}\|\leq b \, \beta(e_k),\label{H2}
\end{equation}
where $b$ is a fixed positive constant, and where  $\beta:\mathbb{R}_+\to \mathbb{R}_+$ is an admissible function.

\item[{\bf(iv)}] {\em Continuity condition}:  For each subsequence $\{x_{k_j}\}$ with $\lim_{j\to \infty}x_{k_j}=\widetilde x$, we have
\begin{equation}\tag{${\rm H3}$}
\limsup_{j\to \infty}f(x_{k_j})\leq f(\widetilde x).\label{H3}
\end{equation}
\end{itemize}
\vspace{-0.1cm}
Note that the above descent condition (H0) and surrogate condition (H1) can  be enforced  by the following stronger property: there exist constants $a,c>0$ such that
\begin{equation}\tag{${\rm H1'}$}
f(x_{k+1})+a \, \varphi(e_k)\leq f(x_k)\;\mbox{ and }\;\| x_{k+1}-x_k\|\leq c \, e_k\;\mbox{ for all }\;k\in \mathbb{N}, \label{H001}
\end{equation}
where $\varphi$ is an admissible function. When $\varphi(t)=t^2$ and $e_k=\|x_{k+1}-x_k\|$, condition \eqref{H001} is often referred to as the {\em sufficient descent condition}. Both conditions \eqref{H001} and \eqref{H2} are automatically satisfied for various commonly used algorithms. For example, 
for proximal-type methods \cite{abs}, these conditions always hold with $\varphi(t)=t^2$, $\beta(t)=t$, $c=1$, and $e_k=\|x_{k+1}-x_k\|$; for cubic regularized Newton methods with Lipschitz continuous Hessian \cite{yzs}, the latter conditions are fulfilled with  $\varphi(t)=t^3$,  
$\beta(t)=t^2$, $c=1$, and $e_k=\|x_{k+1}-x_k\|$; for high-order proximal point methods \cite{AN24}, $\varphi(t)=t^{p+1}$ with $p \ge 1$, $\beta(t)=t^p$, $c=1$, and $e_k=\|x_{k+1}-x_k\|$.

Incorporating the surrogate sequence $\{e_k\}$ gives us some flexibility in handling multistep numerical methods with, e.g., momentum steps. As we see in Section~\ref{sec:alg}, conditions \eqref{H001} and \eqref{H2} hold with $e_k=\|\widehat{x}_{k+1}-x_k\|$, $\varphi(t)=t^{q+2}$, and $\beta(t)=t^{q+1}$ for some $q>0$, where $\widehat{x}_{k}$ is an auxiliary sequence generated by the subproblem of the high-order regularization methods, where $x_k={\rm argmin}_{x \in \{\widehat{x}_k,w_k\}}f(x)$, and where $w_k=\widehat{x}_k + \beta_k(\widehat{x}_k-\widehat{x}_{k-1})$ with $\beta_k  \ge 0$. In the special case where $\varphi(t)=t^2$ and $\beta(t)=t$, there are other general unified frameworks \cite{ochs}, which involve a bivariate function $F$ serving as an approximation for $f$. While we can potentially extend further in this direction, our main aim is to provide a framework applicable for superlinear and quadratic convergence, which is not discussed in \cite{ochs}.\vspace*{0.03in}

From now on, we assume that $f:\mathbb{R}^m\to\Bar{\mathbb{R}}$ is a {\em proper, l.s.c., bounded from below} function. For a given sequence $\{x_k\}$, use $\Omega$ to denote the set of its {\em accumulation points}. Given $x\in \mathbb{R}^m$, define $\mathcal{L}(f(x)):=\{y\in \mathbb{R}^m\;|\; f(y)\leq f(x)\}$. The next two lemmas present some useful facts needed in what follows.

\begin{lemma}[\bf accumulation points under basic assumptions]\label{P10}
Consider a sequence $\{(x_k,e_k)\}$ under assumption \eqref{H1}--\eqref{H3}, and let $\mathcal{L}(f(x_{k_0}))$ be bounded for some $k_0\in\mathbb N$. Then the following assertions hold:\\
{\bf(i)} The set of accumulation points $\Omega$ is nonempty and compact.\\
{\bf(ii)} The sequence $\{x_k\}$ satisfies the relationships
\begin{equation*}\label{2.1}
f(x)=\inf_{k\in\mathbb{N}}f(x_k)=\lim_{k\to \infty}f(x_k)\quad\mbox{for all }\;x\in \Omega.
\end{equation*}
{\bf(iii)} $\Omega\subset\Gamma$ and for any set $\Xi$ satisfying $\Omega\subset\Xi\subset\Gamma$, we have
\begin{equation}\label{5236}
\lim_{k\to \infty}d(x_k, \Xi)=0,
\end{equation}
where $\Gamma$ is the collection of the first-order stationary points \eqref{1stat}.
\end{lemma}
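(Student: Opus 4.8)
\textbf{Proof proposal for Lemma~\ref{P10}.}

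The plan is to exploit the monotonicity of $\{f(x_k)\}$ from \eqref{H1} together with boundedness of a sublevel set to trap the whole tail of the sequence in a compact set, then run the standard accumulation-point machinery, and finally use \eqref{H1}, \eqref{H2} and \eqref{H3} to identify the common function value and the stationarity of limit points. First I would observe that by \eqref{H1} the sequence $\{f(x_k)\}$ is nonincreasing, so for all $k\ge k_0$ we have $x_k\in \mathcal{L}(f(x_{k_0}))$, which is bounded by hypothesis; hence $\{x_k\}_{k\ge k_0}$ is bounded. Since $f$ is bounded below, $\{f(x_k)\}$ converges to some $f_\infty:=\inf_k f(x_k)=\lim_k f(x_k)$. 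Boundedness of the tail immediately gives that $\Omega\ne\emptyset$. For compactness of $\Omega$: $\Omega$ is contained in the closed bounded set $\cl\,\mathcal{L}(f(x_{k_0}))$ (every accumulation point is a limit of tail iterates, which lie in this closed set), and $\Omega$ is always closed as the set of subsequential limits of a sequence; a closed subset of a compact set is compact. This proves (i).

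For (ii), take $x\in\Omega$, say $x=\lim_{j\to\infty}x_{k_j}$. Applying the continuity condition \eqref{H3} to this subsequence gives $\limsup_{j\to\infty}f(x_{k_j})\le f(x)$; but the left-hand side equals $f_\infty$ since the full sequence $\{f(x_k)\}$ converges, so $f_\infty\le f(x)$. On the other hand, by lower semicontinuity of $f$ we have $f(x)\le\liminf_{j\to\infty}f(x_{k_j})=f_\infty$. Combining the two inequalities yields $f(x)=f_\infty=\inf_{k\in\mathbb{N}}f(x_k)=\lim_{k\to\infty}f(x_k)$, which is the claimed chain of equalities.

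For (iii), first show $\Omega\subset\Gamma$. Fix $x\in\Omega$ with $x_{k_j}\to x$. By \eqref{H1}--\eqref{H2}, at each step $k$ there is $w_{k+1}\in\partial f(x_{k+1})$ with $\|w_{k+1}\|\le b\,\beta(e_k)$, and \eqref{H4} forces $e_k\to 0$, hence $\beta(e_k)\to 0$ because $\beta$ is admissible; so $\|w_{k+1}\|\to 0$. Also $\|x_{k+1}-x_k\|\le c\,e_k\to 0$, so the shifted subsequence $x_{k_j+1}$ still converges to $x$, and by (ii), $f(x_{k_j+1})\to f_\infty=f(x)$. Thus we have $x_{k_j+1}\to x$, $f(x_{k_j+1})\to f(x)$, and $w_{k_j+1}\in\partial f(x_{k_j+1})$ with $w_{k_j+1}\to 0$; the robustness (closedness) of the limiting subdifferential graph (this is exactly the defining construction of $\partial f$ in \eqref{sub}, or \cite[Theorem~1.22]{m24}) yields $0\in\partial f(x)$, i.e., $x\in\Gamma$. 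Finally, for \eqref{5236}: suppose not, so there is $\varepsilon>0$ and a subsequence with $d(x_{k_j},\Xi)\ge\varepsilon$; extracting a further subsequence, $x_{k_j}\to x^*$ for some $x^*\in\Omega\subset\Xi$, whence $d(x_{k_j},\Xi)\le\|x_{k_j}-x^*\|\to 0$, a contradiction. Hence $d(x_k,\Xi)\to 0$ for every $\Xi$ with $\Omega\subset\Xi\subset\Gamma$.

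The only genuinely delicate point is the passage $0\in\partial f(x)$ in (iii): it requires the convergence $f(x_{k_j+1})\to f(x)$ (so that the functional-value condition in the definition of $\partial f$ is met in the limit), which is why part (ii) and condition \eqref{H3} are needed before this step rather than after. Everything else is a routine compactness-and-monotonicity argument.
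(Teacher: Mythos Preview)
Your proof is correct and follows essentially the same route as the paper's own proof: monotonicity from \eqref{H1} plus the bounded sublevel set gives boundedness and hence nonempty compact $\Omega$; the sandwich between \eqref{H3} and lower semicontinuity of $f$ gives (ii); and the combination of $e_k\to 0$, $\|w_{k+1}\|\to 0$, $f(x_{k_j+1})\to f(x)$, and the closedness of $\gph\partial f$ gives $\Omega\subset\Gamma$, from which $d(x_k,\Xi)\to 0$ follows. The paper is terser in (iii) (it just says ``combine (i), (ii), and \eqref{H2}'' and then invokes $\lim_k d(x_k,\Omega)=0$), but your expanded argument is exactly what underlies that sentence.
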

\begin{proof} To verify (i), observe that the descent condition \eqref{H1} and the boundedness of $\mathcal{L}(f(x_{k_0}))$ ensure the boundedness of $\{x_k\}$, which yields (i) since the closedness of $\Omega$ is easily derived by the diagonal process. 

To proceed with (ii), we deduce from \eqref{H1} that
\begin{equation}\label{5.0228}
 \inf_{k\in\mathbb{N}}f(x_k)=\lim_{k\to \infty}f(x_k).
\end{equation}
Taking any $x\in \Omega$ allows us to find a subsequence $\{x_{k_j}\}$ such that $\lim_{j\to \infty}x_{k_j}= x$. Combining this with condition \eqref{H3} and the l.s.c.\ of $f$ gives us
$\limsup_{j\to \infty}f(x_{k_j})\leq f(x)\leq \liminf_{j\to \infty}f(x_{k_j})$,
which establishes (ii)  based on the relationship provided in \eqref{5.0228}.

To verify the final assertion (iii), combine  (i), (ii), and condition \eqref{H2} to get $\Omega\subset \Gamma$.  For any set $\Xi$ satisfying $\Omega\subset \Xi\subset \Gamma$, \eqref{5236} is a direct consequence of the fact that
$\lim_{k\to \infty}d(x_k, \Omega)=0$.
\end{proof}

For a function $\tau: \mathbb{R}_+ \rightarrow \mathbb{R}_+$, we say it is {\em asymptotically shrinking} around zero  if $\tau(0)=0$ and
\begin{equation}\label{5228}
 \limsup_{t\to 0^+}\sum_{n=0}^{\infty}\frac{\tau^n(t)}{t}<\infty,
\end{equation}
where $\tau^0(t):=t$ and $\tau^n(t):=\tau(\tau^{n-1}(t))$ for all $n\in\mathbb N$. It follows from the definition, that the function $\tau(t)= \alpha t$ with $\alpha \in (0,1)$ is asymptotically shrinking around zero, while $\tau(t)=t$ is not. The next simple lemma provides an easily verifiable sufficient condition ensuring this property.

\begin{lemma}[\bf asymptotically shrinking functions]\label{L1.1}
Let $\tau:\mathbb{R}_+ \rightarrow \mathbb{R}_+$ be a nondecreasing function with $\tau(0)=0$ and such that
$\limsup_{t\to 0^+}\frac{\tau(t)}{t}<1$. Then this function is asymptotically shrinking around zero.
\end{lemma}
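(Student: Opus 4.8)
The plan is to show that the hypothesis $\limsup_{t\to 0^+}\frac{\tau(t)}{t}<1$ lets us fix a threshold $\delta>0$ and a ratio $\alpha\in(0,1)$ such that $\tau(t)\le\alpha t$ for all $t\in[0,\delta]$, and then to exploit monotonicity of $\tau$ to propagate this contraction through all iterates $\tau^n(t)$. First I would pick $\alpha\in(\limsup_{t\to0^+}\tau(t)/t,1)$; by definition of $\limsup$ there is $\delta>0$ with $\tau(t)\le\alpha t$ whenever $0<t\le\delta$, and this also holds trivially at $t=0$ since $\tau(0)=0$.

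Next I would prove by induction on $n$ that $\tau^n(t)\le\alpha^n t$ for every $t\in[0,\delta]$. The base case $n=0$ is the identity $\tau^0(t)=t$. For the inductive step, suppose $\tau^{n}(t)\le\alpha^n t\le t\le\delta$; since $\tau$ is nondecreasing, $\tau^{n+1}(t)=\tau(\tau^n(t))\le\tau(\alpha^n t)$, and because $\alpha^n t\in[0,\delta]$ we may apply the contraction estimate to get $\tau(\alpha^n t)\le\alpha\cdot\alpha^n t=\alpha^{n+1}t$. This uses only that $\tau$ is nondecreasing together with the single-step bound on $[0,\delta]$, which is exactly what the hypotheses supply.

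Finally, summing the geometric bound gives, for every $t\in(0,\delta]$,
\begin{equation*}
\sum_{n=0}^{\infty}\frac{\tau^n(t)}{t}\le\sum_{n=0}^{\infty}\alpha^n=\frac{1}{1-\alpha}<\infty,
\end{equation*}
so $\limsup_{t\to0^+}\sum_{n=0}^{\infty}\tau^n(t)/t\le\frac{1}{1-\alpha}<\infty$, and together with $\tau(0)=0$ this is precisely the definition \eqref{5228} of $\tau$ being asymptotically shrinking around zero.

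There is no serious obstacle here; the only mild subtlety worth stating carefully is that the induction needs $\alpha^n t$ to stay inside the interval $[0,\delta]$ where the contraction estimate is valid, which is automatic because $\alpha<1$ forces $\alpha^n t\le t\le\delta$. The monotonicity of $\tau$ is essential precisely at this point — without it one could not pass from a pointwise bound on $\tau$ near $0$ to a bound on the composition $\tau(\tau^n(t))$.
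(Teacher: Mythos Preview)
Your proof is correct and follows essentially the same route as the paper's: choose a contraction ratio $\alpha\in(r,1)$ and a threshold $\delta>0$ with $\tau(t)\le\alpha t$ on $(0,\delta]$, show inductively that $\tau^n(t)\le\alpha^n t$, and sum the geometric series. The only cosmetic difference is that the paper applies the contraction estimate directly at the point $\tau^{n-1}(t)$ (which lies in $[0,\delta)$ because $\tau^{n-1}(t)\le\alpha^{n-1}t<t\le\delta$), whereas you interpose the monotonicity step $\tau(\tau^n(t))\le\tau(\alpha^n t)$ before applying the contraction; both arguments yield the same bound, and in fact the paper's version shows that the nondecreasing hypothesis is not strictly needed for this particular inductive step.
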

\begin{proof}
To verify \eqref{5228}, denote $r:=\limsup_{t\to 0^+}\frac{\tau(t)}{t}<1$  and for any $\varepsilon\in \left(r,  1\right)$ find $\delta>0$ such that
$\tau(t)<\varepsilon t$ whenever $t\in (0, \delta)$, which yields
$\tau^2(t)=\tau(\tau(t))<\varepsilon \tau(t)<\varepsilon^2 t$ as $t\in (0, \delta)$. By induction, we get $\tau^n(t)\leq \varepsilon^n t$ for all $n\in \mathbb{N}\; \mbox{and} \; t\in (0, \delta)$. Hence
\begin{equation*}
\sum_{n=0}^{\infty}\frac{\tau^n(t)}{t}\leq \sum_{n=0}^{\infty}\varepsilon^n=\frac{1}{1-\varepsilon}\quad\;\mbox{whenever }\; t\in (0, \delta),
\end{equation*}
which justifies \eqref{5228} and thus completes the proof.
\end{proof}

Now we are ready to establish our first abstract convergence result.

\begin{theorem}[{\bf abstract convergence framework}]\label{T5.3}
For some $\eta>0$, let $\xi:[0, \eta)\to \mathbb{R}_+$ be a nondecreasing continuous function with $\xi(0)=0$, and let $\{s_k\}$ be a nonnegative sequence  with $\lim_{k\to \infty}s_k=0$. Consider a sequence $\{(x_k,e_k)\}\subset \mathbb{R}^m \times \mathbb{R}_+$, which satisfies conditions \eqref{H1}--\eqref{H3}, and assume that $\mathcal{L}(f(x_{k_0}))$ is bounded for some $k_0\in\mathbb N$. Take further any point $\overline{x}$, a closed set $\Xi$ with $\overline{x}\in \Omega\subset \Xi \subset \Gamma$, and a nondecreasing function $\tau:\mathbb{R}_+ \to \mathbb{R}_+$ that is asymptotically shrinking around zero. Suppose that there exist $\ell_1 \in [0,1), \ell_2,\ell_3 \in [0,\infty)$, { and  $k_1 \in \mathbb{N}\setminus\{0\}$} such that  the following hold whenever $k \ge k_1$ with $x_{k}\in B_{\mathbb{R}^m}(\overline{x},\eta)$:
\begin{itemize}
\item[{\bf(i)}] {\bf The sequence $\{s_k\}$ shrinks with respect to the mapping $\tau$}, i.e.,
\begin{equation}\label{2029}
 \, s_k\leq  \, \tau (s_{k-1}).
 \end{equation}

\item[{\bf(ii)}] {\bf Surrogate of successive change grows mildly}, i.e.,   \begin{equation}\label{228}
 e_k\leq \ell_1e_{k-1}+ \ell_2\Lambda_{k, k+1}+ \ell_3 s_k,
\end{equation}
where the sequence $\{ \Lambda_{k, k+1}\}$ is defined by 
\begin{equation}\label{229}
 \Lambda_{k, k+1}:= \xi(f(x_k)-f(\overline{x}))-\xi(f(x_{k+1})-f(\overline{x})).
\end{equation}
\end{itemize}
Then there exists $k_2 \in \mathbb{N}$ with $k_2 \ge k_1$ such that  the inclusion $x_k\in B_{\mathbb{R}^m}(\overline{x}, \eta)$ holds 
for all $k > k_2$, and therefore the estimates in \eqref{2029} and \eqref{228} are also fulfilled for all $k> k_2$. Moreover, it follows that
\begin{equation}\label{abs-conver}
\sum_{k=0}^{\infty}e_k<\infty,\quad\sum_{k=1}^{\infty}\|x_{k+1}-x_k\|<\infty,
\end{equation}
and the sequence $\{x_k\}$ converges to $\overline{x} \in \Xi$ as $k\to \infty$.
\end{theorem}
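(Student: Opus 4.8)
The strategy is a standard "induction + telescoping" argument adapted to the surrogate setting, with the asymptotically shrinking property of $\tau$ doing the heavy lifting. First I would set up the key summability estimate: iterating the surrogate growth bound \eqref{228}, which reads $e_k \le \ell_1 e_{k-1} + \ell_2 \Lambda_{k,k+1} + \ell_3 s_k$, gives for any $N$ a bound of the form $\sum_{k=k_1}^{N} e_k \le \frac{1}{1-\ell_1}\bigl(e_{k_1-1} + \ell_2 \sum_{k=k_1}^{N}\Lambda_{k,k+1} + \ell_3 \sum_{k=k_1}^{N} s_k\bigr)$, using $\ell_1\in[0,1)$ and a discrete Gr\"onwall/geometric-series manipulation. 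The sum $\sum_k \Lambda_{k,k+1}$ telescopes by \eqref{229} to $\xi(f(x_{k_1})-f(\overline x)) - \xi(f(x_{N+1})-f(\overline x)) \le \xi(f(x_{k_1})-f(\overline x))$, which is finite and bounded independent of $N$ since $\xi$ is nondecreasing and $f(x_k)\ge f(\overline x)$ by Lemma~\ref{P10}(ii). The sum $\sum_k s_k$ is finite because \eqref{2029} gives $s_k \le \tau(s_{k-1})$, hence inductively $s_{k_1+n} \le \tau^n(s_{k_1})$, and the asymptotically shrinking property \eqref{5228} applied at $t = s_{k_1}$ (which we may take small, since $s_k\to 0$) bounds $\sum_n \tau^n(s_{k_1})$. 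This yields $\sum_{k} e_k < \infty$, and then $\sum_k \|x_{k+1}-x_k\| < \infty$ follows immediately from the surrogate condition \eqref{H4}, giving \eqref{abs-conver}.

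The subtlety — and the main obstacle — is that all of the above is only known to hold \emph{conditionally}, i.e., for those $k\ge k_1$ with $x_k \in B_{\mathbb{R}^m}(\overline x,\eta)$; we must first establish that iterates eventually stay in this ball, which is itself what makes the telescoping legitimate. This is the classical "trapping" step of KL-type convergence proofs and requires a careful bootstrap. I would argue as follows: since $\overline x\in\Omega$, there is a subsequence $x_{k_j}\to\overline x$; pick $k_2 = k_{j}$ large enough that $k_2\ge k_1$, that $\|x_{k_2}-\overline x\|$ is much smaller than $\eta$, and that the "tail mass" controlling $e_{k_2-1}$, $\sum_{k\ge k_2}\Lambda_{k,k+1} \le \xi(f(x_{k_2})-f(\overline x))$ (using continuity of $\xi$, $\xi(0)=0$, and $f(x_{k_2})\to f(\overline x)$ from Lemma~\ref{P10}(ii)), and $\sum_{k\ge k_2} s_k$ are all small. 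Then I run an induction on $k > k_2$: assuming $x_{k_2},\dots,x_k$ all lie in $B_{\mathbb{R}^m}(\overline x,\eta)$, the conditional estimates \eqref{2029}–\eqref{228} hold on this range, so the partial-sum bound above gives $\sum_{i=k_2}^{k} e_i \le \varepsilon$ for a prescribed small $\varepsilon$, whence $\|x_{k+1}-\overline x\| \le \|x_{k_2}-\overline x\| + c\sum_{i=k_2}^{k} e_i \le \|x_{k_2}-\overline x\| + c\varepsilon < \eta$, closing the induction. Quantitatively one chooses the thresholds so that $\|x_{k_2}-\overline x\| + \frac{c}{1-\ell_1}\bigl(c\,e_{k_2-1}^{\text{part}} + \ell_2\,\xi(f(x_{k_2})-f(\overline x)) + \ell_3\sum_{k\ge k_2}s_k\bigr) < \eta$; here I would also need $e_{k_2-1}$ itself small, which follows since $e_k\to 0$ by \eqref{H4}.

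Finally, once $x_k\in B_{\mathbb{R}^m}(\overline x,\eta)$ for all $k>k_2$ and \eqref{abs-conver} holds, the series $\sum_k \|x_{k+1}-x_k\|$ converging forces $\{x_k\}$ to be Cauchy, hence convergent; its limit is an accumulation point, and since it lies within $\eta$ of $\overline x$ while $\overline x$ was (morally) a candidate limit, I would pin down that the limit is exactly $\overline x$ by noting $\overline x\in\Omega$ together with the fact that a convergent sequence has a unique accumulation point — so $\Omega = \{\lim_k x_k\}$ and $\overline x\in\Omega$ give $\lim_k x_k = \overline x$. That $\overline x\in\Xi$ is given by hypothesis. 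This completes the argument; no step requires more than routine estimation once the trapping induction is set up correctly, and the only genuinely delicate point is the simultaneous choice of the starting index $k_2$ and the ball radius bookkeeping so that the self-referential estimate closes.
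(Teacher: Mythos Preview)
Your proposal is correct and follows essentially the same route as the paper's proof: the paper likewise bootstraps a trapping argument (phrased as proof by contradiction on the first escaping index rather than direct induction, but these are equivalent), uses the same telescoping of $\sum \Lambda_{k,k+1}$ and the asymptotically-shrinking bound $\sum_n \tau^n(s_{k_2})$ to control the partial sums of $e_k$, and concludes via the Cauchy property exactly as you describe. The only cosmetic difference is that the paper first records the auxiliary fact $\limsup_{k\to\infty}\sum_{n}\tau^n(s_k)=0$ before choosing $k_2$, which slightly streamlines the simultaneous threshold selection you flag as delicate.
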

\begin{proof} We first see that $\limsup_{k\to \infty}\sum_{n=0}^{\infty} \tau^n(s_k)=0$. To show this, we observe that $\sum_{n=0}^{\infty} \tau^n(0)=0$ and assume without loss of generality that $s_k\neq 0$ for all $k\in \mathbb{N}$. Since $\lim_{k\to \infty}s_k=0$ and $\tau$ is asymptotically shrinking around zero, we get the equalities
\begin{equation}\label{4.008}
\limsup_{k\to \infty}\sum_{n=0}^{\infty} \tau^n(s_k)=\limsup_{k\to \infty}\frac{\sum_{n=0}^{\infty} \tau^n(s_k)}{s_k}\cdot s_k=\lim_{k\to \infty}s_k\cdot\limsup_{k\to \infty}\sum_{n=0}^{\infty} \frac{\tau^n(s_k)}{s_k}=0.
\end{equation}
Recalling that $\xi$ is continuous, it follows from \eqref{4.008} combined with \eqref{H1}, \eqref{H4}, and Lemma~\ref{P10} that there exists $k_2\geq \max\{k_0, k_1\}$ such that $x_{k_2}\in B_{\mathbb{R}^m}(\overline{x}, \frac{\eta}{2})$, and for all $ k\geq k_2$ we have
 $f(\overline{x})\leq f(x_k)<f(\overline{x})+\eta$ and
\begin{equation}\label{235}
 \frac{\ell_1}{1- \ell_1}e_{k-1}+\frac{\ell_2}{1-\ell_1}\xi(f(x_{k})-f(\overline{x}))+\frac{\ell_3 }{1- \ell_1}\sum_{n=0}^{\infty}\tau^n(s_k)<\frac{\eta}{2c}.
\end{equation}
The rest of the proof is split into the following two claims.\vspace*{0.05in}

\underline{Claim~1}: {\em For every $k>k_2$, the inclusion $x_k\in B_{\mathbb{R}^m}(\overline{x}, \eta)$ holds.} Indeed, suppose on the contrary that there exists $k>k_2$ such that $x_k\notin B_{\mathbb{R}^m}(\overline{x}, \eta)$. Letting $\bar k:=\min\{k>k_2\;|\; x_k\notin B_{\mathbb{R}^m}(\overline{x}, \eta)\}$, observe that for any $k\in \{k_2, \dots, \bar k-1\}$, we have $x_k \in B_{\mathbb{R}^m}(\overline{x}, \eta)$, and thus the inequalities in \eqref{2029} and \eqref{228} are satisfied. Using them together with \eqref{229} implies that
\begin{equation*}
\begin{aligned}
\sum_{k=k_2}^{\bar k-1}e_k\leq & \ \ell_1 \sum_{k=k_2}^{\bar k-1}e_{k-1}+ \ell_2\sum_{k=k_2}^{\bar k-1}\Lambda_{k, k+1}+ \ell_3\sum_{k=k_2}^{\bar k-1} s_k\\
\leq &\ \ell_1 \sum_{k=k_2}^{\bar k-1}e_{k-1}+\ell_2 \, \xi\big(f(x_{k_2})-f(\overline{x})\big) +\ell_3\big(s_{k_2} +\tau(s_{k_2}) +\cdots+\tau^{\bar k-k_2-1} (s_{k_2})\big)\\
\leq &\ \ell_1 \sum_{k=k_2}^{\bar k-1}e_{k-1}+ \ell_2 \, \xi\big(f(x_{k_2})-f(\overline{x})\big) +\ell_3\sum_{n=0}^{\infty}\tau^n(s_{k_2}).
\end{aligned}
\end{equation*}
Rearranging the terms above brings us to the estimate
\begin{equation}\label{5240}
\begin{aligned}
\sum_{k=k_2}^{\bar k-1}e_k\leq &\frac{\ell_1}{1- \ell_1}e_{k_2-1}+\frac{\ell_2}{1- \ell_1}\xi\big(f(x_{k_2})-f(\overline{x})\big)+\frac{\ell_3}{1- \ell_1}\sum_{n=0}^{\infty}\tau^n(s_{k_2}).
\end{aligned}
\end{equation}
Combining this with \eqref{235} and condition \eqref{H4}, we arrive at
\begin{equation*}
\begin{aligned}
\|x_{\bar k}-\overline{x}\|\leq &\|x_{k_2}-\overline{x}\|+\sum_{k=k_2}^{\bar k-1}\| x_{k+1}-x_k\|
\leq \|x_{k_2}-\overline{x}\|+c\sum_{k=k_2}^{\bar k-1}e_k<\eta,
\end{aligned}
\end{equation*}
which contradicts the assumption that $x_{\bar k}\notin B_{\mathbb{R}^m}(\overline{x}, \eta)$ and thus justifies the claim.\vspace*{0.05in}

\underline{Claim~2}: {\em It holds that $\sum_{k=1}^{\infty}\|x_k-x_{k+1}\|<\infty$}. Indeed, Claim~1 tells us that the inequalities in \eqref{2029} and \eqref{228} are satisfied for all $k> k_2$. Consequently, for any $k\geq k_2$ and $p>k$, by similar arguments as those for deriving \eqref{5240} we get the estimate
\begin{equation}\label{5242}
\begin{aligned}
\sum_{i=k}^{p}e_i\leq & \frac{\ell_1}{1- \ell_1}e_{k-1}+\frac{\ell_2}{1- \ell_1}\xi\big(f(x_{k})-f(\overline{x})\big)+\frac{\ell_3 }{1- \ell_1}\sum_{n=0}^{\infty}\tau^n(s_k).
\end{aligned}
\end{equation}
Letting $p \rightarrow \infty$ and employing \eqref{235} yields
\begin{equation*}
\begin{aligned}
\sum_{k=0}^{\infty}e_k\leq &\sum_{k=0}^{k_2-1}e_k+\sum_{i=k_2}^{\infty}e_i\\
\leq &\sum_{k=0}^{k_2-1}e_k+ \frac{\ell_1}{1- \ell_1}e_{k_2-1}+\frac{\ell_2}{1- \ell_1}\xi\big(f(x_{k_2})-f(\overline{x})\big)+\frac{\ell_3}{1- \ell_1}\sum_{n=0}^{\infty}\tau^n(s_{k_2})<\infty,
\end{aligned}
\end{equation*}
which gives us $\sum_{k=1}^{\infty}\| x_{k+1}-x_k\|<\infty$ due to \eqref{H4}. Therefore, $\{x_k\}$ is a Cauchy sequence. Since $\overline{x}$ is an accumulation point of $\{x_k\}$, it follows   that  $x_k \rightarrow \overline{x} \in \Xi$  as $k\to \infty$.
\end{proof}

In what follows, we study the {\em convergence rate} of $\{x_k\}$.  For each $k$, denote $\Delta_k:=\sum_{i=k}^{\infty}e_i$ and deduce from Theorem~\ref{T5.3} that $\Delta_k<\infty$. For any $k$ sufficiently large, it follows from \eqref{5242} that 
\begin{equation*}\nonumber
\begin{aligned}
\Delta_k= \sum_{i=k}^{\infty}e_i\leq & \frac{\ell_1}{1- \ell_1}e_{k-1}+\frac{\ell_2}{1- \ell_1}\xi\big(f(x_{k})-f(\overline{x})\big)+\frac{\ell_3}{1- \ell_1}\sum_{n=0}^{\infty}\tau^n(s_k),
\end{aligned}
\end{equation*}
which ensures from \eqref{H4} that
\begin{equation}\label{256}
\begin{aligned}
\|\overline{x}-x_{k}\|=& \lim_{j\to \infty}\|x_{k+j}-x_{k}\| \leq   \sum_{i=k}^{\infty}\| x_{i+1}-x_i\|
\leq   \ c\sum_{i=k}^{\infty}e_i=c\Delta_k\\
\leq & \ \frac{c\,\ell_1}{1- \ell_1}e_{k-1}+\frac{c\,\ell_2}{1- \ell_1}\xi\big(f(x_{k})-f(\overline{x})\big) +\frac{c\,\ell_3}{1- \ell_1}\sum_{n=0}^{\infty}\tau^n(s_k).
\end{aligned}
\end{equation}

The next theorem establishes a fast convergence rate of the iterative sequence generated by the abstract algorithm satisfying the basic assumptions (BA) under generalized metric subregularity.

\begin{theorem}[{\bf convergence rate under generalized metric subregularity}]\label{P5.2} Let $\psi, \beta:\mathbb{R}_+\to \mathbb{R}_+$ be increasing admissible functions, let $\{(x_k, e_k)\}$ be a sequence satisfying the basic assumptions \eqref{H1}--\eqref{H3}, and let $\overline{x}\in\Omega$ be an accumulation point of $\{x_k\}$. Fixing any closed set $\Xi$ with $\Omega\subset \Xi\subset \Gamma$, assume that $\mathcal{L}(f(x_{k_0}))$ is bounded for some $k_0\in\mathbb N$ and the following properties are fulfilled:
\begin{itemize}
\item[{\bf(i)}] {\bf Generalized metric subregularity holds at $\overline{x}$ with respect to $(\psi,\Xi)$}, meaning that there exist numbers $\gamma, \eta\in (0,\infty)$ such that
\begin{equation}\label{0890}
\psi\big(d(x, \Xi)\big)\leq  \gamma \, d\big(0, \partial f(x)\big)\quad\mbox{ for all }\;x\in B_{\mathbb{R}^m}(\overline{x}, \eta).
\end{equation}
\item[{\bf(ii)}] {\bf Surrogate of successive change bounded in terms of the target set}, meaning that there exist $\ell>0$ and $k_1 \in \mathbb{N}$ such that
\begin{equation}\label{0900}
e_k\leq \ell \, d(x_k, \Xi)\quad\mbox{ for all }\;k \ge k_1.
\end{equation}
\item[{\bf(iii)}] {\bf Growth rate control is satisfied}, meaning that $\limsup_{t\to 0^+}\frac{\tau(t)}{t}<1$, where  $\tau:\mathbb{R}_+\to \mathbb{R}_+$ is given by $\tau(t):= \psi^{-1}\left(\gamma b\beta( \ell t)\right)$ for all $t\in \mathbb{R}_+$.
\end{itemize} 
Then the sequence $\{x_k\}$ converges to $\overline{x} \in \Xi$ as $k\to \infty$ with the rate $\|x_{k}-\overline{x}\|=O(\tau(\|x_{k-1}-\overline{x}\|))$, i.e.,
\begin{equation}\label{0910}
\limsup_{k\to \infty}\frac{\|x_{k}-\overline{x}\|}{{\tau}(\|x_{k-1}-\overline{x}\|)}<\infty.
\end{equation}
\end{theorem}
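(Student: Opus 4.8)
The plan is to show that the hypotheses of Theorem~\ref{P5.2} reduce to those of the abstract convergence framework in Theorem~\ref{T5.3}, with a judicious choice of the auxiliary sequence $\{s_k\}$ and the function $\xi$, and then to read off the convergence rate \eqref{0910} from estimate \eqref{256}. The natural candidate is $s_k:=\|x_k-\overline x\|$, so that by assumption (iii) the function $\tau(t)=\psi^{-1}(\gamma b\beta(\ell t))$ is nondecreasing (being a composition of increasing admissible functions and their inverse) with $\tau(0)=0$, and $\limsup_{t\to 0^+}\tau(t)/t<1$; hence Lemma~\ref{L1.1} applies and $\tau$ is asymptotically shrinking around zero. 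Since $\overline x\in\Omega$, there is a subsequence along which $x_k\to\overline x$, and once we have established that $\{x_k\}$ converges at all, it converges to $\overline x$ and $s_k\to 0$ as required by Theorem~\ref{T5.3}.

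First I would verify the shrinking condition \eqref{2029}, i.e. $s_k\le\tau(s_{k-1})$, for all large $k$ with $x_k$ near $\overline x$. Starting from the relative error condition \eqref{H2}, we have $w_{k+1}\in\partial f(x_{k+1})$ with $\|w_{k+1}\|\le b\,\beta(e_k)$; combining this with the surrogate bound \eqref{0900}, $e_k\le\ell\,d(x_k,\Xi)\le\ell\,\|x_k-\overline x\|=\ell\,s_{k-1}$ (using $\overline x\in\Xi$), and monotonicity of $\beta$ gives $d(0,\partial f(x_{k+1}))\le b\,\beta(\ell s_{k-1})$. Feeding this into the generalized metric subregularity inequality \eqref{0890} applied at $x_{k+1}$ yields $\psi(d(x_{k+1},\Xi))\le\gamma b\,\beta(\ell s_{k-1})$, and since $\psi$ is increasing and admissible its inverse is well-defined and increasing, so $s_k=\|x_{k+1}-\overline x\|$ — wait, I must be careful with indexing: I would actually set $s_k:=d(x_k,\Xi)$ rather than $\|x_k-\overline x\|$ to make the chain close cleanly, note $d(x_k,\Xi)\le\|x_k-\overline x\|$ still holds, and derive $s_{k+1}\le\psi^{-1}(\gamma b\beta(\ell s_k))=\tau(s_k)$ directly. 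This is the shrinking estimate; it also shows $s_k\to 0$ via the asymptotic-shrinking property once $x_k$ stays near $\overline x$.

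Next I would check condition (ii) of Theorem~\ref{T5.3}, the mild-growth bound \eqref{228} on $e_k$. Using \eqref{0900} again, $e_k\le\ell\,d(x_k,\Xi)=\ell\,s_k$, which is precisely \eqref{228} with $\ell_1=0$, $\ell_2=0$, $\ell_3=\ell$; in this degenerate case one does not even need $\xi$, and conditions \eqref{H1}--\eqref{H3} together with boundedness of the level set $\mathcal{L}(f(x_{k_0}))$ are the remaining hypotheses, all of which are assumed. Thus Theorem~\ref{T5.3} applies: there is $k_2$ such that $x_k\in B_{\mathbb R^m}(\overline x,\eta)$ for all $k>k_2$, the series $\sum e_k$ and $\sum\|x_{k+1}-x_k\|$ converge, and $x_k\to\overline x\in\Xi$. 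Finally, specializing the master estimate \eqref{256} to $\ell_1=\ell_2=0$, $\ell_3=\ell$ gives $\|\overline x-x_k\|\le\frac{c\ell}{1}\sum_{n=0}^\infty\tau^n(s_k)$; since $\tau$ is asymptotically shrinking, $\sum_{n=0}^\infty\tau^n(s_k)=O(s_k)=O(d(x_k,\Xi))=O(\|x_k-\overline x\|)$, but to get the sharper rate \eqref{0910} in terms of $\tau(\|x_{k-1}-\overline x\|)$ I would instead bound $\sum_{n\ge 0}\tau^n(s_k)\le C\,s_k\le C\,\tau(s_{k-1})\le C\,\tau(\|x_{k-1}-\overline x\|)$ using the shrinking step applied once, together with monotonicity of $\tau$ and $d(x_{k-1},\Xi)\le\|x_{k-1}-\overline x\|$. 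The main obstacle is the bookkeeping in the second paragraph: making the index shifts consistent so that the metric-subregularity estimate at $x_{k+1}$, the surrogate bound at step $k$, and the definition of $s_k$ all align to produce exactly $s_{k+1}\le\tau(s_k)$, and ensuring the "for all large $k$" quantifiers (the $k_1$ from \eqref{0900}, the $k_2$ from staying in $B(\overline x,\eta)$) are threaded correctly; the analytic content is otherwise a routine chaining of the three hypotheses.
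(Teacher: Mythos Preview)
Your approach is essentially the same as the paper's: set $s_k:=d(x_k,\Xi)$, verify the shrinking estimate $s_k\le\tau(s_{k-1})$ by chaining \eqref{0890}, \eqref{H2}, and \eqref{0900}, apply Theorem~\ref{T5.3} with $\ell_1=\ell_2=0$, $\ell_3=\ell$, and then read off the rate from \eqref{256} together with one more application of the shrinking step and monotonicity of $\tau$.

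The one point where your logic is not clean is the verification that $\lim_{k\to\infty}s_k=0$, which Theorem~\ref{T5.3} requires as a standing hypothesis. Both of your suggested routes are circular: you cannot wait until after convergence of $\{x_k\}$ is established, and you cannot use the shrinking estimate itself, since that estimate is only shown to hold when $x_k\in B_{\mathbb R^m}(\overline x,\eta)$ --- and it is precisely Theorem~\ref{T5.3} that guarantees the iterates eventually stay in that ball. The paper breaks this circularity by invoking Lemma~\ref{P10}(iii): since $\Omega\subset\Xi\subset\Gamma$, the basic assumptions \eqref{H1}--\eqref{H3} and boundedness of $\mathcal L(f(x_{k_0}))$ alone already force $d(x_k,\Xi)\to 0$, with no appeal to Theorem~\ref{T5.3}. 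Once you plug that in, your argument goes through; the indexing issues you flag are indeed just bookkeeping (the paper applies \eqref{0890} at $x_k$, uses \eqref{H2} to bound $d(0,\partial f(x_k))\le b\beta(e_{k-1})$, and then \eqref{0900} at step $k-1$).
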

\begin{proof}
Let us first check that, for any $k \ge k_1+1$ with $x_k\in B_{\mathbb{R}^m}(\overline{x}, \eta)$, we have the estimate
\begin{equation}\label{0920}
 d(x_{k}, \Xi)\leq \tau\big( d(x_{k-1}, \Xi)\big).
\end{equation}
To this end,  fix $k \ge k_1+1$ such that  $x_{k}\in B_{\mathbb{R}^m}(\overline{x}, \eta)$ and deduce from \eqref{H2}, \eqref{0890}, and \eqref{0900} that
\begin{equation*}
\psi\big(d(x_{k}, \Xi)\big)\leq \gamma \, d\big(0, \partial f(x_k)\big) \le \gamma b \, \beta(e_{k-1})\leq \gamma b \, \beta\big(\ell \, d(x_{k-1}, \Xi)\big),
\end{equation*}
where the last inequality is due to the fact that $\beta$ is an increasing admissible function. Since $\psi$ is also an increasing admissible function, we get 
\begin{equation*}
 d(x_{k}, \Xi)\leq  \psi^{-1}\big(\gamma b \beta(\ell d(x_{k-1}\Xi)\big)=\tau(d(x_{k-1}, \Xi))
\end{equation*}
and thus verifies \eqref{0920}. It follows from Lemma~\ref{P10}(iii), Lemma~\ref{L1.1}, and Theorem~\ref{T5.3} (applied to $\ell_1=\ell_2=0$, $\ell_3=\ell$, and $s_k= d(x_k, \Xi)$) that \eqref{0920} holds for all large $k$, and hence $\{x_k\}$ converges to $\overline{x}\in \Xi$ as $k\to \infty$.

Using now \eqref{0920}, the increasing property of $\tau$, and the fact that $\overline{x}\in \Xi$,  we arrive at 
\begin{equation}\nonumber
d(x_{k}, \Xi)\leq \tau( d(x_{k-1}, \Xi))\leq \tau( \|x_{k-1}-\overline{x}\|)
\end{equation}
for all $k$ sufficiently large. Combining this with \eqref{256} and recalling that $\ell_1=\ell_2=0$, $\ell_3=\ell$, and $s_k= d(x_k, \Xi)$ in this case, we obtain the estimates
\begin{equation*}
\begin{aligned}
\|x_{k}-\overline{x}\|\leq c \, \Delta_k \leq  & \, c \, \ell \, d(x_{k}, \Xi)\sum_{n=0}^{\infty}\frac{\tau^n(d(x_{k}, \Xi))}{d(x_{k}, \Xi)}
\leq  c \,\ell \, \tau(\|x_{k-1}-\overline{x}\|)\sum_{n=0}^{\infty}\frac{\tau^n( d(x_{k}, \Xi))}{ d(x_{k}, \Xi)},
\end{aligned}
\end{equation*}
which being combined with Lemma~\ref{P10}(iii) and Lemma~\ref{L1.1} justifies \eqref{0910} and complete the proof.
\end{proof}

In the rest of this section, we present discussions on some remarkable features of our abstract convergence analysis and its comparison with known results in the literature.

\begin{remark}[{\bf discussions on abstract convergence analysis and its specifications}]\label{rem:abs-disc} $\,$ {\rm

$\bullet$ Note first that, for the three assumptions imposed in Theorem~\ref{P5.2}, the second one (surrogate sequence bounded in terms of the target set) is often tied up with and indeed guaranteed by the construction of many common algorithms, while the other two assumptions are related to some regularity of the function $f$ with respect to  the target set. In particular, in the special case where  $e_k=\|x_{k+1}-x_k\|$, Assumption~(ii) in Theorem~\ref{P5.2} is {\em automatically 
satisfied} for the proximal methods \cite{abs} with $\Xi=\Gamma$,  and for the cubic regularized Newton methods \cite{np06} with $\Xi=\Theta$, $p \in (0,q+1)$. 

$\bullet$ For the aforementioned {\em proximal method} from \cite{abs} to solve $\min_{x \in \mathbb{R}^m}f(x)$ with a fixed proximal parameter $\lambda>0$, where $f$ is a proper l.s.c.\ function, the automatic fulfillment of Assumption~(ii) allows us to deduce from Theorem~\ref{P5.2} that if $\partial f$ is $p$th-order exponent metrically subregular at $\overline{x}$ with respect to the first-order stationary set $\Gamma$ for $p \in (0,1)$, then the proximal method converges {\em at least superlinearly with the convergence rate $\frac{1}{p}$}. To the best of our knowledge, this appears to be new to the literature. Moreover, we see in Example~\ref{remark:4.2} below that the derived convergence rate can indeed be attained.  

$\bullet$ In Section~\ref{sec:alg}, we apply the conducted convergence analysis to the {\em higher-order regularized Newton method with momentum steps} to solve $\min_{x \in \mathbb{R}^m}f(x)$, where $f$ is a $C^2$-smooth function whose Hessian is H\"older continuous with exponent $q \in (0,1]$. As mentioned, Assumption~(ii) holds automatically with respect to the second-order stationary points $\Theta$ from \eqref{2stat}. Therefore, if $\partial f$ is $p$th-order exponent metrically subregular at $\overline{x}$ with respect to $\Theta$ for $p \in (0,q+1)$, then the sequence generated by the algorithm converges to a second-order stationary point {\em at least superlinearly with order {$\frac{q+1}{p}$}}. Even in the special case where $p=1$, this improves the current literature on high-order regularized Newton methods by obtaining fast convergence rates under the pointwise metric subregularity (instead of the uniform metric subregularity/error bound condition as in \eqref{eb}), and in the more general context with allowing momentum steps; see Remark~\ref{remark:5.1} for more details.}
\end{remark}

Now we provide a simple example showing that the established convergence rate {\em can be attained} for the proximal method of minimizing a smooth univariate function.

\begin{example}[{\bf tightness of the convergence rate}]\label{remark:4.2} {\rm Consider applying the proximal point method for the univariate function $f(t)=|t|^{\frac{3}{2}}$, $t\in\mathbb R$, with the iterations
\begin{equation*}
t_{k+1}={\rm argmin}_{t \in \mathbb{R}}\big\{f(t)+ \frac{\lambda}{2}(t-t_k)^2\big\},\quad t_0=1,
\end{equation*}
where $\lambda$ is a fixed positive parameter. Then we have
\begin{equation*}
\nabla f(t_{k+1})+\lambda (t_{k+1}-t_k)= \frac{3}{2}{\rm sign}(t_{k+1}) |t_{k+1}|^{\frac{1}{2}}+ \lambda (t_{k+1}-t_k)=0.
\end{equation*}
This shows that $t_k>0 \Rightarrow t_{k+1}>0$, and so $t_k>0$ for all $k$, which further implies that
$t_k=\frac{3}{2\lambda}(t_{k+1})^{\frac{1}{2}}+t_{k+1}$ yielding $t_k \ge \frac{3}{2\lambda}(t_{k+1})^{\frac{1}{2}}$.
Take $\Xi=\Gamma$, the first-order stationary set \eqref{1stat}, and get
\begin{equation*}
|t_{k+1}-t_k|=\frac{3}{2\lambda} |t_{k+1}|^{\frac{1}{2}} \le |t_k|=d(t_k,\Xi).
\end{equation*}
For $w_{k+1}=\nabla f(t_{k+1})=\frac{3}{2} t_{k+1}^{\frac{1}{2}}$, we have $|w_{k+1}|=\frac{3}{2} t_{k+1}^{\frac{1}{2}}= \lambda |t_{k+1}-t_k|$. Choosing now $\psi(t)=\frac{3}{2} t^{\frac{1}{2}}$ and $\beta(t)=t$ gives us $\tau(t)=\psi^{-1}(\lambda \beta( t))=O(t^2)$ and the equalities
$\psi(d(t_k,\Xi))=\psi(|t_k|)=\frac{3}{2} |t_k|^{\frac{1}{2}}=d(0,\nabla f(t_k))$. Direct verifications show that all the conditions in Theorem~\ref{P5.2} are satisfied, and thus it follows from the theorem that $t_k \rightarrow \overline{t}=0$ in a quadratic rate, e.g.,
$|t_{k+1}-\overline{t}|=t_{k+1}=O(t_k^2)=O(|{t_{k}-}\overline{t}|^2)$.

To determine the exact convergence rate, we solve $t_k=\frac{3}{2\lambda} \, t^{\frac{1}{2}}+t$ in terms of the variable $t\in\mathbb{R}_+$. Letting $u=\sqrt{t}$ provides $t_k=\frac{3}{2\lambda}u+u^2$, which implies that
$u= -\frac{3}{4 \lambda} + \sqrt{t_k + \frac{9}{16\lambda^2}}$. 
Passing finally to the limit as $k\to\infty$ and remembering that $t_k \rightarrow 0$ tell us that
\begin{equation*}
t_{k+1}= \left[-\frac{3}{4 \lambda} + \sqrt{t_k + \frac{9}{16\lambda^2}}\right]^2=\left[\frac{t_k}{\frac{3}{4 \lambda} + \sqrt{t_k + \frac{9}{16\lambda^2}}}\right]^2 =O(t_k^2),
\end{equation*}
which agrees with the exact quadratic convergence rate derived in Theorem~\ref{P5.2}.}
\end{example}\vspace*{-0.2in}

\section{Abstract Convergence Analysis under the KL Property}\label{sec:kl}\setcounter{equation}{0}\vspace*{-0.05in}

In this section, we continue our convergence analysis of abstract algorithms involving now the {\em Kurdyka-\L ojasiewicz} (KL) property from Definition~\ref{de1}. Here is the main result of this section.

\begin{theorem}[{\bf convergence under the KL properties}]\label{P5.1}
Consider a sequence $\{(x_k,e_k)\}$,  which satisfies assumptions  \eqref{H001}, \eqref{H2}, \eqref{H3} being such that the set $\mathcal{L}(f(x_{k_0}))$  is bounded for some $k_0\in\mathbb N$. Let $\overline{x}$ be an accumulation point of the iterated sequence, and let $f:\mathbb{R}^m\to\Bar{\mathbb{R}}$ be an l.s.c.\ function satisfying the KL property at $\overline{x}$, where $\eta>0$  and $\vartheta:[0, \eta)\to \mathbb{R}_+$ are taken from Definition~{\rm\ref{de1}}. Suppose also that there exists a pair $(\mu_1, \mu_2)\in (0, 1)\times \mathbb{R}_+$ for which \begin{equation}\label{regularity}
\limsup_{(s, t)\to (0,0)}\frac{\varphi^{-1}(\beta(s)t)}{\mu_1s+\mu_2 t}\leq 1, 
\end{equation}
where $\varphi(\cdot)$ and $\beta(\cdot)$ are increasing  functions taken from conditions \eqref{H001} and  \eqref{H2}. Then we have $\sum_{k=1}^{\infty}\|x_{k+1}-x_k\|<\infty$, and the sequence $\{x_k\}$ converges  to $\overline{x}$ as $k\to \infty$.
\end{theorem}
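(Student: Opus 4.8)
The plan is to fit the KL hypotheses into the abstract convergence template of Theorem~\ref{T5.3}, with the role of the shrinking sequence $\{s_k\}$ played by the function values $s_k := f(x_k) - f(\overline x)$ and the role of the auxiliary nondecreasing function $\xi$ played (up to constants) by the desingularizing function $\vartheta$ from Definition~\ref{de1}. First I would invoke Lemma~\ref{P10}: under \eqref{H001}, \eqref{H2}, \eqref{H3} (note \eqref{H001} implies \eqref{H1} and \eqref{H4}) and boundedness of $\mathcal{L}(f(x_{k_0}))$, the accumulation set $\Omega$ is nonempty and compact, $f$ is constant on $\Omega$ with value $\lim_k f(x_k)$, and since $\overline x \in \Omega$ this common value is $f(\overline x)$. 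If $f(x_{k_0}) = f(\overline x)$ for some $k_0$, the descent in \eqref{H001} forces $\varphi(e_k) = 0$, hence $e_k = 0$ and $x_{k+1} = x_k$ eventually, so the sequence is finite and there is nothing to prove; thus assume $f(x_k) > f(\overline x)$ for all $k$.

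Next I would derive the key recursive inequality on $e_k$. From the descent part of \eqref{H001}, $a\,\varphi(e_k) \le f(x_k) - f(x_{k+1})$. Applying the KL inequality (iii) at $x_k$ (valid once $x_k$ is close to $\overline x$ and $f(\overline x) < f(x_k) < f(\overline x) + \eta$, which holds for $k$ large along the relevant indices) together with the concavity of $\vartheta$ gives the standard estimate
\begin{equation*}
\vartheta(f(x_k)-f(\overline x)) - \vartheta(f(x_{k+1})-f(\overline x)) \ge \vartheta'(f(x_k)-f(\overline x))\big(f(x_k)-f(x_{k+1})\big) \ge \frac{f(x_k)-f(x_{k+1})}{d(0,\partial f(x_k))}.
\end{equation*}
Combining with the relative error bound $d(0,\partial f(x_k)) \le \|w_k\| \le b\,\beta(e_{k-1})$ from \eqref{H2} and with $a\,\varphi(e_k) \le f(x_k)-f(x_{k+1})$, I obtain $a\,\varphi(e_k) \le b\,\beta(e_{k-1})\,\Lambda_{k-1,k}$ where $\Lambda_{k-1,k} := \vartheta(f(x_{k-1})-f(\overline x)) - \vartheta(f(x_k)-f(\overline x))$ (reindexed appropriately). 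Applying $\varphi^{-1}$ and using the regularity hypothesis \eqref{regularity} — which says $\varphi^{-1}(\beta(s)t) \le \mu_1 s + \mu_2 t$ for $(s,t)$ near $(0,0)$ with $\mu_1 \in (0,1)$ — yields, for all large $k$,
\begin{equation*}
e_k \le \mu_1 e_{k-1} + \mu_2' \,\Lambda_{k-1,k}
\end{equation*}
for a suitable constant $\mu_2'$ (absorbing $a$, $b$ and the constant $\mu_2$). This is precisely inequality \eqref{228} of Theorem~\ref{T5.3} with $\ell_1 = \mu_1 \in [0,1)$, $\ell_2 = \mu_2'$, $\ell_3 = 0$, $\xi = \vartheta$, and $\tau$ irrelevant (take $\tau(t) = t/2$, say, with $s_k \equiv 0$ so that \eqref{2029} is trivial); here I use that $\vartheta$ is nondecreasing and continuous with $\vartheta(0)=0$, matching the hypotheses on $\xi$.

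With \eqref{228} in hand, I would then simply cite Theorem~\ref{T5.3} to conclude that $x_k \in B_{\mathbb R^m}(\overline x, \eta)$ for all large $k$, that $\sum_k e_k < \infty$ and $\sum_k \|x_{k+1}-x_k\| < \infty$ (the latter via \eqref{H4}, which follows from \eqref{H001}), and hence that $\{x_k\}$ is Cauchy and converges to its accumulation point $\overline x$. The one technical point requiring care — and the main obstacle — is the bookkeeping needed to guarantee that $x_k$ stays in the KL neighborhood $B_{\mathbb R^m}(\overline x,\varepsilon) \cap [f(\overline x) < f < f(\overline x)+\eta]$ long enough for the recursion to take effect: this is a chicken-and-egg situation (we need the iterates near $\overline x$ to run the argument, but we need the argument to keep them near $\overline x$). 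This is exactly what the inductive Claim~1 in the proof of Theorem~\ref{T5.3} handles, so the strategy is to choose the starting index $k_2$ large enough that $x_{k_2}$ is deep inside the neighborhood and the tail sum $\sum_{k \ge k_2} e_k$ is controlled by $\eta/(2c)$ via \eqref{235}, and then let Theorem~\ref{T5.3} do the rest. A secondary subtlety is verifying that \eqref{regularity} can be applied with the arguments $s = e_{k-1}$ and $t = \Lambda_{k-1,k}$ both tending to $0$; the first tends to $0$ by \eqref{H4}, and the second because $\vartheta(f(x_k)-f(\overline x))$ is a convergent monotone sequence, so its consecutive differences vanish.
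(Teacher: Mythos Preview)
Your proposal is correct and follows essentially the same route as the paper: derive the recursive bound $e_k \le \ell_1 e_{k-1} + \ell_2 \Lambda_{k,k+1}$ from the KL inequality, concavity of $\vartheta$, \eqref{H001}, \eqref{H2}, and \eqref{regularity}, then invoke Theorem~\ref{T5.3} with $s_k \equiv 0$ and $\ell_3=0$. Two minor cleanups: your own derivation actually produces $\Lambda_{k,k+1}$ (not $\Lambda_{k-1,k}$), and since \eqref{regularity} is only a $\limsup \le 1$ you need a slack factor $\alpha \in (1,1/\mu_1)$ so that $\ell_1 = \alpha\mu_1 < 1$, exactly as the paper does.
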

\begin{proof}
It follows from \eqref{regularity} that for any $\alpha\in \left(1, \frac{1}{\mu_1}\right)$ there exists $\delta>0$ such that
\begin{equation}\label{520}
\varphi^{-1}\big(\beta(s)t\big)\leq \alpha \mu_1 s+\alpha \mu_2 t\quad\mbox{ whenever }\;s, t\in (0, \delta).
\end{equation}
Let $\overline{\eta}:=\min\{\eta, \varepsilon\}$ with $\varepsilon,\eta$ given in Definition~\ref{de1}. It follows from the abstract convergence framework of Theorem~\ref{T5.3} with $s_k=0$ that the claimed assertions are verified if showing that there exist constants $\ell_1\in (0, 1), \ell_2\in [0,\infty)$, and $k_1 \in \mathbb{N}\setminus\{0\}$ such that for any $k \ge k_1$ with $x_k\in B_{\mathbb{R}^m}(\overline{x}, \overline{\eta})$ we have
\begin{equation}\label{5.287}
 e_k\leq  \ell_1e_{k-1}+\ell_2\Lambda_{k,k+1}
\end{equation}
with the quantities $\Lambda_{k, k+1}$ defined by
\begin{equation*}
\Lambda_{k, k+1}:= \vartheta\big(f(x_k)-f(\overline{x})\big)-\vartheta\big(f(x_{k+1})-f(\overline{x})\big).
\end{equation*}
To see this, recall that $\varphi$ is an admissible function and  then deduce from \eqref{H001} and Lemma \ref{P10} that 
\begin{equation*}
f(x_{k+1})\leq f(x_k)\;\mbox{ for all }\;k\in \mathbb{N},\quad f(\overline{x})=\inf_{k\in\mathbb{N}}f(x_k)=\lim_{k\to \infty}f(x_k),
\end{equation*}
and $\lim_{k\to \infty}e_k=0$. It follows from the continuity of $\vartheta$ that there exists $k_1\geq 1$ for which  
\begin{equation}\label{291}
 f(\overline{x})\leq f(x_k)<f(\overline{x})+\overline{\eta},\quad e_{k-1}<\delta,\;\mbox{ and }
\end{equation}
\begin{equation}\label{292}
 \frac{b}{a}[\vartheta(f(x_k)-f(\overline{x}))-\vartheta(f(x_{k+1})-f(\overline{x}))]<\delta\;\mbox{ whenever }\; k\geq k_1.
\end{equation}
Take any $k\geq k_1$ with $x_k\in B_{\mathbb{R}^m}(\overline{x}, \overline{\eta})$ and observe that if $e_k=0$, then \eqref{5.287} holds trivially. Assuming that $e_k\neq 0$ and then using \eqref{H001} and \eqref{291}, we get $f(\overline{x})\leq f(x_{k+1})<f(x_k)$, which being combined with the KL property and condition \eqref{H2} shows that $w_k\neq 0$ and $e_{k-1}\neq 0$. Since $w_k\in \partial f(x_{k})$, the aforementioned conditions readily yield the estimates
\begin{equation}\label{5237}
\vartheta'(f(x_k)-f(\overline{x}))\geq \frac{1}{\|w_k\|}\geq \frac{1}{b\beta(e_{k-1})}.
\end{equation}
The concavity of $\vartheta$ in the KL property and assumption \eqref{H001} imply that 
\begin{equation}\label{5238}
\begin{aligned}
\vartheta\big(f(x_k)-f(\overline{x})\big)-\vartheta(f(x_{k+1})-f(\overline{x}))&\ge\vartheta'\big(f(x_k)-f(\overline{x})\big)\big(f(x_k)-f(x_{k+1})\big)\\
&\ge a \vartheta'\big(f(x_k)-f(\overline{x})
\big)\varphi( e_k).
\end{aligned}
\end{equation}
Substituting \eqref{5237} into \eqref{5238} guarantees that
\begin{equation*}\label{239}
\frac{b}{a}\big[\vartheta\big(f(x_k)-f(\overline{x})\big)-\vartheta\big(f(x_{k+1})-f(\overline{x}\big)\big]\geq \frac{\varphi(e_k)}{\beta(e_{k-1})},
\end{equation*}
which establishes in turn that
\begin{equation*}
\begin{aligned}
e_k\leq \varphi^{-1}\Big(\frac{b}{a}\beta(e_{k-1})\big[\vartheta\big(f(x_k)-f(\overline{x})\big)-\vartheta\big(f(x_{k+1})-f(\overline{x})\big)\big]\Big).
\end{aligned}
\end{equation*}
Combining the latter with \eqref{520} and \eqref{292}, we get
\begin{equation*}
\begin{aligned}
e_k&\leq\alpha\mu_1e_{k-1}+\frac{\alpha \mu_2b}{a}\Big(\vartheta\big(f(x_k)-f(\overline{x})\big)-\vartheta\big(f(x_{k+1})-f(\overline{x})\big)\Big),
\end{aligned}
\end{equation*}
which shows that \eqref{5.287} holds with $\ell_1:=\alpha\mu_1<1$ and $\ell_2:= \frac{\alpha \mu_2b}{a}$ and thus completes the proof.
\end{proof}

Now we employ Theorem~\ref{P5.1} and the results of the previous section to find explicit conditions ensuring {\em superlinear convergence} of an abstract algorithm under the KL property. Talking $\vartheta\colon[0,\eta]\to\mathbb R_+$ from Definition~\ref{de1} and the admissible functions $\ph(\cdot)$ and $\beta(\cdot)$ from Theorem~\ref{P5.1}, define $\rho:\mathbb{R}_+\to \mathbb{R}_+$ by
\begin{equation*}
\rho(t):=\left\{ \begin{array}{cc} \big((\vartheta^{-1})'\big)^{-1}(b\beta(t)) & \mbox{ if }\;t>0,\\
0 & \mbox{ if }\;t=0  
\end{array} \right.
\end{equation*}
and consider the function  $\xi:\mathbb{R}_+\to \mathbb{R}_+$ given by $\xi(t):=\varphi^{-1}\left(\frac{\vartheta^{-1}(\rho(t))}{a}\right)$ for all $t\in \mathbb{R}_+$.

\begin{theorem}[{\bf superlinear convergence rate under the KL property}]\label{kl-super} Let $\{(x_k, e_k)\}$, $\mathcal{L}(f(x_{k_0}))$, $\overline{x}$, $f$, $\vartheta$, $\varphi$, and $\beta$ be as in the general setting of Theorem~{\rm\ref{P5.1}} under the assumptions \eqref{H001},  \eqref{H2}, and \eqref{H3}. Take the function $\xi:\mathbb{R}_+\to \mathbb{R}_+$ as defined above and assume in addition that it is  increasing on $\mathbb{R}_+$ with $\lim_{t\to 0^+}\frac{\xi(t)}{t}=0$. The following assertions hold:
\begin{itemize}
\item[\bf(i)] If there exists $r \ge 1$ such that  $e_k\leq r\|x_{k+1}-x_k\|$ for all large $k\in \mathbb{N}$, then the sequence $\{x_k\}$ converges to $\overline{x}$ as $k\to \infty$ with the convergence rate
\begin{equation}\label{1.36}
\limsup_{k\to \infty}\frac{\|x_{k}-\overline{x}\|}{\widetilde{\xi}(\|x_{k-1}-\overline{x}\|)}<\infty,\;\mbox{ where }\;\widetilde{\xi}(t):=\xi(2r t).
\end{equation}
\item[\bf(ii)] If there exist  $\ell>0$ and a closed set $\Xi$ satisfying $\Omega\subset \Xi\subset \Gamma$ and such that
\begin{equation}\label{443}
e_k\leq \ell \, d(x_k, \Xi)\;\mbox{ for all large }\;k\in\mathbb N,
\end{equation}
then the sequence $\{x_k\}$ converges to $\overline{x}\in \Xi$ as $k\to \infty$ with the convergence rate
\begin{equation}\label{444}
\limsup_{k\to \infty}\frac{\|x_{k}-\overline{x}\|}{\widehat{\xi}(\|x_{k-1}-\overline{x}\|)}<\infty,\;\mbox{ where }\;\widehat{\xi}(t):=\xi(\ell \, t).
\end{equation}
\end{itemize}
\end{theorem}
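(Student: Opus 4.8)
The plan is to deduce Theorem~\ref{kl-super} from the two preceding results of this section, namely Theorem~\ref{P5.1} (which already guarantees convergence $x_k\to\overline x$ and summability $\sum_k\|x_{k+1}-x_k\|<\infty$) and Theorem~\ref{P5.2} (which converts a one-step contraction on a suitable auxiliary sequence into a convergence rate). The key observation that links the KL setting to the generalized-metric-subregularity machinery is that the combination of the KL inequality, the sufficient descent condition \eqref{H001}, and the relative error condition \eqref{H2} yields a \emph{pointwise generalized metric subregularity}–type estimate along the iterates. Concretely, for large $k$ with $x_k$ near $\overline x$, the KL inequality gives $\vartheta'(f(x_k)-f(\overline x))\,d(0,\partial f(x_k))\ge 1$, and \eqref{H2} gives $d(0,\partial f(x_k))\le\|w_k\|\le b\beta(e_{k-1})$; combining these and inverting the decreasing function $(\vartheta^{-1})'$ produces $f(x_k)-f(\overline x)\le \vartheta^{-1}\!\big(\rho(e_{k-1})\big)$. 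Feeding this back into the descent inequality $a\varphi(e_k)\le f(x_k)-f(x_{k+1})\le f(x_k)-f(\overline x)$ yields precisely $e_k\le\xi(e_{k-1})$, where $\xi$ is the function defined just before the theorem statement.

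First I would establish this recursion $e_k\le\xi(e_{k-1})$ rigorously, being careful about the degenerate cases $e_k=0$ or $w_k=0$ (handled trivially) and about the domains on which $\vartheta^{-1}$, $\rho$, and $\varphi^{-1}$ are defined and monotone — here the hypothesis that $\xi$ is increasing with $\lim_{t\to0^+}\xi(t)/t=0$ is what makes $\xi$ asymptotically shrinking around zero via Lemma~\ref{L1.1} (since $\limsup_{t\to0^+}\xi(t)/t=0<1$). Then for part (i), under $e_k\le r\|x_{k+1}-x_k\|$, I would run the argument of Theorem~\ref{T5.3}/\ref{P5.2} with the surrogate sequence $s_k:=e_k$ itself and $\tau:=\xi$: the recursion $s_k\le\tau(s_{k-1})$ is exactly \eqref{2029}, condition \eqref{228} holds with $\ell_1=\ell_2=0$, $\ell_3=1$, and the estimate \eqref{256} then bounds $\|x_k-\overline x\|$ by a constant times $\sum_{n\ge0}\tau^n(e_k)$, which by the asymptotically-shrinking property is $O(e_k)=O(\xi(e_{k-1}))$. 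The final step is to replace $e_{k-1}$ by $\|x_{k-1}-\overline x\|$: since $e_{k-1}\le r\|x_k-x_{k-1}\|\le r(\|x_k-\overline x\|+\|x_{k-1}-\overline x\|)\le 2r\|x_{k-1}-\overline x\|$ for large $k$ (using $\|x_k-\overline x\|\to0$ monotonically enough, or just that $\|x_k-\overline x\|\le\|x_{k-1}-\overline x\|$ eventually from Cauchyness estimates), the monotonicity of $\xi$ gives $\|x_k-\overline x\|=O(\xi(2r\|x_{k-1}-\overline x\|))=O(\widetilde\xi(\|x_{k-1}-\overline x\|))$, which is \eqref{1.36}.

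For part (ii), the argument is parallel but now I would take the surrogate sequence to be $s_k:=d(x_k,\Xi)$. From \eqref{443} we have $e_{k-1}\le\ell\,d(x_{k-1},\Xi)$, and substituting this into the recursion $e_k\le\xi(e_{k-1})$ together with $d(x_k,\Xi)\le\|x_{k+1}-x_k\|+d(x_{k+1},\Xi)$-type bookkeeping — or more directly, applying Theorem~\ref{P5.2} with $\psi$ and $\beta$ chosen so that $\tau(t)=\psi^{-1}(\gamma b\beta(\ell t))$ reproduces $\xi(\ell t)$ — gives $d(x_k,\Xi)\le\widehat\xi(d(x_{k-1},\Xi))$ for large $k$. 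Lemma~\ref{P10}(iii) ensures $d(x_k,\Xi)\to0$, Lemma~\ref{L1.1} gives that $t\mapsto\xi(\ell t)$ is asymptotically shrinking, and then \eqref{256} with $\ell_1=\ell_2=0$, $\ell_3=\ell$, $s_k=d(x_k,\Xi)$ yields $\|x_k-\overline x\|=O\!\big(d(x_k,\Xi)\big)$; bounding $d(x_{k-1},\Xi)\le\|x_{k-1}-\overline x\|$ (as $\overline x\in\Xi$) and using monotonicity of $\xi$ produces \eqref{444}.

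The main obstacle I anticipate is not any single hard estimate but the careful domain/monotonicity bookkeeping in deriving $e_k\le\xi(e_{k-1})$: one must invert $\vartheta^{-1}$ (a concave increasing function, so $\vartheta^{-1}$ is... one needs $(\vartheta^{-1})'$ to be decreasing so that $((\vartheta^{-1})')^{-1}$ is well-defined and decreasing, hence $\rho$ is increasing) and ensure the composition $\varphi^{-1}((\vartheta^{-1}(\rho(\cdot)))/a)$ makes sense and is increasing on a neighbourhood of $0$, with all intermediate quantities lying in the domains $(0,\eta)$ etc. dictated by Definition~\ref{de1}. The hypotheses that $\vartheta$ is concave, continuously differentiable with $\vartheta'>0$ on $(0,\eta)$, and that $\xi$ is given to be increasing with $\xi(t)/t\to0$, are exactly what is needed to push this through, so the proof should reduce to a clean invocation of Theorems~\ref{P5.1}, \ref{T5.3}, and \ref{P5.2} once the recursion is in hand.
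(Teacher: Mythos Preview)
Your core derivation of the recursion $e_k\le\xi(e_{k-1})$ via the chain
\[
\vartheta'(f(x_k)-f(\overline x))\ge\frac{1}{b\beta(e_{k-1})}
\ \Longrightarrow\
\vartheta(f(x_k)-f(\overline x))\le\rho(e_{k-1})
\ \Longrightarrow\
a\varphi(e_k)\le f(x_k)-f(\overline x)\le \vartheta^{-1}(\rho(e_{k-1}))
\]
is exactly what the paper does. Where you diverge is in packaging the rate: the paper does \emph{not} invoke Theorems~\ref{T5.3}/\ref{P5.2} here. Instead it observes that $\lim_{t\to0^+}\xi(t)/t=0$ gives $e_k\le\xi(e_{k-1})<\tfrac12 e_{k-1}$ for large $k$, hence $\sum_{i\ge k}e_i\le 2e_k$ by a geometric sum, and so $\|x_k-\overline x\|\le c\sum_{i\ge k}e_i\le 2c\,e_k\le 2c\,\xi(e_{k-1})$ directly. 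This is strictly simpler than routing through the abstract framework with $s_k=e_k$, $\tau=\xi$, and the asymptotically-shrinking machinery, though your route would also work.

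There is a genuine gap in your final step for part~(i). You write $e_{k-1}\le r(\|x_k-\overline x\|+\|x_{k-1}-\overline x\|)\le 2r\|x_{k-1}-\overline x\|$, justifying the second inequality by ``$\|x_k-\overline x\|\le\|x_{k-1}-\overline x\|$ eventually from Cauchyness.'' Cauchyness does not give eventual monotonicity of $\|x_k-\overline x\|$, so as written this step fails. The paper sidesteps the issue: from monotonicity of $\xi$ it uses $\xi(\alpha+\gamma)\le\max\{\xi(2\alpha),\xi(2\gamma)\}\le\xi(2\alpha)+\xi(2\gamma)$ to get
\[
\xi(e_{k-1})\le\xi(2r\|x_k-\overline x\|)+\xi(2r\|x_{k-1}-\overline x\|),
\]
and then absorbs the first term on the right into the left side of the limsup quotient, since $\xi(2r\|x_k-\overline x\|)=o(\|x_k-\overline x\|)$ by the hypothesis $\xi(t)/t\to0$. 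For part~(ii) your plan is correct but over-engineered: once $e_k\le\xi(e_{k-1})$ and $e_{k-1}\le\ell\,d(x_{k-1},\Xi)\le\ell\,\|x_{k-1}-\overline x\|$ are in hand, monotonicity of $\xi$ gives $\|x_k-\overline x\|\le 2c\,\xi(\ell\|x_{k-1}-\overline x\|)$ immediately, with no need to recast things through Theorem~\ref{P5.2}.
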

\begin{proof} Suppose without loss of generality that $e_k\neq 0$ for all $k\in\mathbb N$. It follows from Theorem~\ref{T5.3} that \eqref{5237} holds when $k$ is sufficiently large and that there exists $k_1$ such that
\begin{equation*}
(\vartheta^{-1})'\big(\vartheta((f(x_k)-f(\overline{x})))\big)=\frac{1}{\vartheta'\big(f(x_k)-f(\overline{x})\big)}\leq  b\beta(e_{k-1})\quad\mbox{as }\; k\geq k_1.
\end{equation*}
The latter readily implies that
\begin{equation}\label{301}
\vartheta\big((f(x_k)-f(\overline{x}))\big)\leq   \big((\vartheta^{-1})'\big)^{-1}\big(b\beta(e_{k-1})\big) =\rho(e_{k-1}) \quad\mbox{as }\; k\geq k_1.
\end{equation}
Furthermore, condition \eqref{H001} and the first part of \eqref{291} tell us that
\begin{equation*}
a\varphi(e_k)\leq f(x_k)-f(x_{k+1})\leq f(x_k)-f(\overline{x})\;\mbox{ for all }\;k\in\mathbb{N},
\end{equation*}
which together with \eqref{301} yields $e_k\le \xi(e_{k-1})$.
Combining this with the limiting conditions $\lim_{k\to \infty}e_{k}=0$ and $\lim_{t\to 0^+}\frac{\xi(t)}{t}=0$ allows us to find a sufficiently large number $k_2\in\mathbb N$ such that
\begin{equation*}
e_k\leq \xi(e_{k-1})<\frac{1}{2} e_{k-1}\quad\mbox{for all }\;k\geq k_2.
\end{equation*}
Invoking again condition \eqref{H001} ensures that for any $k\geq k_2$ we have the estimates
\begin{equation}\label{1.74}
\begin{aligned}
\|\overline{x}-x_{k}\|=\lim_{j\to \infty}\|x_{k+j}-x_{k}\|\leq &\sum_{i=k}^{\infty}\|x_{i+1}-x_i\|\leq c\sum_{i=k}^{\infty}e_i
\leq  2c \, e_k\leq 2c \, \xi(e_{k-1}).
\end{aligned}
\end{equation}

Now we are in a position to justify both assertions of the theorem. Starting with (i), note that
\begin{equation*}
\xi(e_{k-1})\leq \xi(r\|x_k-x_{k-1}\|)\leq \xi(r\|x_k-\overline{x}\|+r\|x_{k-1}-\overline{x}\|)\leq \xi(2r\|x_k-\overline{x}\|)+\xi(2r\|x_{k-1}-\overline{x}\|)
\end{equation*}
for large $k$, where the last inequality follows from the increasing property of $\xi$ ensuring that 
\begin{equation*}
\xi(\alpha+\gamma) \le \max\big\{\xi(2 \alpha),\xi(2\gamma)\big\} \le \xi(2 \alpha)+\xi(2\gamma)
\end{equation*}
for any $\alpha,\gamma \ge 0$. Observing that $\lim_{t\to 0^+}\frac{\xi(2rt)}{t}=0$, it readily follows from $x_k \to \overline{x}$  and \eqref{1.74} that
\begin{equation*}
\begin{aligned}
\limsup_{k\to \infty}\frac{\|x_{k}-\overline{x}\|}{\xi(2r\|x_{k-1}-\overline{x}\|)}= &\limsup_{k\to \infty}\frac{\|x_{k}-\overline{x}\|}{\xi(2r\|x_{k-1}-\overline{x}\|)+\xi(2r\|x_{k}-\overline{x}\|)}\\
\le&\limsup_{k\to \infty}\frac{\|x_{k}-\overline{x}\|}{\xi(e_{k-1})}<\infty,
\end{aligned}
\end{equation*}
which brings us to \eqref{1.36} and thus justifies assertion (i).

To verify finally assertion (ii), observe that the increasing property of $\xi$ and the inclusion $\overline{x}\in \Xi$ allow us to deduce from \eqref{443} the estimates
\begin{equation*}
\xi(e_{k-1})\le\xi\big(\ell d(x_{k-1}, \Xi)\big)\leq \xi\big(\ell\|x_{k-1}-\overline{x}\|\big)\quad\mbox{ for all }\;k\ge k_2,
\end{equation*}
which together with \eqref{1.74} give us \eqref{444} and thus completes the proof of the theorem.
\end{proof}

We conclude this section with some specifications of the convergence results obtained under the KL property and comparisons with known results in the literature. 

\begin{remark}[{\bf further discussions and comparisons for KL convergence analysis}]\label{remark:4.4} $\,$ {\rm 

$\bullet$ Observe first that assumption \eqref{regularity} in Theorem~\ref{P5.1} holds {\em automatically} for many common algorithms. For example, in the standard proximal methods \cite{abs} we have $\varphi(t)=t^2$ and $\beta(t)=t$, and thus by setting $(\mu_1, \mu_2)=(1/2, 1/2)$ in Theorem~\ref{P5.1}, condition \eqref{regularity} holds by the classical AM–GM inequality.

$\bullet$ In the case where $\vartheta(t)=c t^{1-\theta}$ with the {\em KL exponent} $\theta\in[0,1)$, we can obtain the explicit {\em sublinear, linear}, and {\em superlinear} convergence rates. In fact, let $\beta, \varphi,\vartheta:\mathbb{R}_+\to \mathbb{R}_+$ be such that
$\beta(t)=t^{\alpha},\; \varphi(t)=t^{1+\alpha},\;\mbox{ and }\vartheta(t)=ct^{1-\theta}$, where $c,\alpha >0$ and $\theta \in [0,1)$.
It is easy to see that the assumption \eqref{regularity} in Theorem~\ref{P5.1} holds with $(\mu_1, \mu_2)=(\frac{\alpha}{\alpha+1}, \frac{1}{\alpha+1})$ due to the AM–GM inequality. In this case, direct verifications show that
\begin{equation*}
\rho(t):=\big((\vartheta^{-1})'\big)^{-1}(b\beta(t)) =(1-\theta)^\frac{1-\theta}{\theta}c^{\frac{1}{\theta}}b^{\frac{1-\theta}{\theta}}t^{\frac{\alpha(1-\theta)}{\theta}}=:\kappa t^{\frac{\alpha(1-\theta)}{\theta}},
\end{equation*}
\begin{equation*}
\xi(t):=\varphi^{-1}\left(\frac{\vartheta^{-1}(\rho(t))}{a}\right)=a^{-\frac{1}{1+\alpha}}\left(\frac{\kappa}{c}\right)^{\frac{1}{(1+\alpha)(1-\theta)}} t^\frac{\alpha}{(1+\alpha)\theta}=:\mu t^\frac{\alpha}{(1+\alpha)\theta}.
\end{equation*}
Suppose that condition \eqref{443} holds, which is true for various common numerical methods such as proximal-type methods and high-order regularized methods. If $\theta<\frac{\alpha}{1+\alpha}$, then Theorem~\ref{kl-super} implies that the sequence $\{x_k\}$ converges to $\overline{x}$  {\em superlinearly} with the convergence rate $\frac{\alpha}{(1+\alpha)\theta}$. It will be seen in Section~\ref{sec:alg} that if we further assume that $f$ is a ${\cal C}^2$-smooth function and satisfies the {\em strict saddle point property}, then Theorem~\ref{P5.2} and Corollary~\ref{coro3.1} allow us to improve the superlinear convergence rate to $\frac{(1-\theta)\alpha}{\theta}$. 

$\bullet$ Following the arguments from \cite[Theorem~2]{ab09}, we can also obtain convergence rates in the remaining cases for $\theta$. Indeed, when $ \theta=\frac{\alpha}{1+\alpha}$ there exist $\varrho\in (0,1)$ and $\zeta\in (0,\infty)$ such that for large $k$ we get
\begin{equation}\nonumber
\|x_k-\overline{x}\| \le \Delta_k:=\sum_{i=k}^{\infty}\|x_{i+1}-x_i\|\leq \zeta\varrho^{k}.
\end{equation}
If finally $\theta> \frac{\alpha}{1+\alpha}$, then there exists $\mu>0$ ensuring the estimates 
\begin{equation*}
\|x_k-\overline{x}\|\le\Delta_k\le\mu k^{-\frac{\alpha(1-\alpha)}{(1+\alpha)\theta-\alpha}}\;\mbox{ for all large }\;k\in\mathbb N.
\end{equation*}
The proof of this fact is rather standard, and so we omit details here.

$\bullet$ Let us highlight that the superlinear convergence rate derived under the KL assumptions need {\em not be tight} in general, and it can be worse than the one derived under the generalized metric subregularity. For instance, consider the situation in Example~\ref{remark:4.2}, where the proximal point method is applied to $f(t)=|t|^{\frac{3}{2}}$. As discussed in that example, the sequence $\{t_k\}$ generated by the algorithm converges with a quadratic rate, and the convergence rate agrees with the rate derived under the generalized metric subregularity. Direct verification shows that the function $f(t)=|t|^{\frac{3}{2}}$ satisfies the KL property with exponent $\theta=\frac{1}{3}$, and that we have $\alpha=1$ for the proximal point method. Thus Theorem~\ref{kl-super} only implies that $\{t_k\}$ converges {\em superlinearly} with rate $\frac{3}{2}$ while {\em not quadratically} as under generalized metric subregularity.}
\end{remark}
\vspace*{-0.2in}

\section{Generalized Metric Subregularity for Second-Order Conditions}\label{sec:2nd}\setcounter{equation}{0}\vspace*{-0.05in}

In this section, we continue our study of generalized metric subregularity while concentrating now on deriving verifiable conditions for the fulfillment of this notion with respect to the set $\Xi=\Theta$ of {\em second-order stationary points} defined in \eqref{2stat}, where the function $f$ in question is ${\cal C}^2$-smooth. Such a {\em second-order generalized metric subregularity} plays a crucial role in the justification of fast convergence of our new high-order regularized Newton method in Section~\ref{sec:alg}. We'll see below that this notion holds in broad settings of variational analysis and optimization and is satisfied in important practical models.

Note that if $f$ is convex, then there is no difference between $\Theta$ and the set $\Gamma$ of first-order stationary points \eqref{1stat}. We focus in what follows on {\em nonconvex} settings for generalized metric subregularity, where $\Theta\ne\Gamma$ as in the following simple while rather instructive example. 

\begin{example}[\bf generalized metric subregularity for nonconvex functions]\label{2noncov} {\rm Consider the function $f:\mathbb{R}^m \rightarrow \mathbb{R}$ given by $f(x)=(\|x\|^2-r)^{2p}$ with $r>0$ and $p \ge 1$. Direct computations show that $\nabla f(x)= {4}p (\|x\|^2-r)^{2p-1} x$ and thus 
$$\nabla^2 f(x)= {8}p(2p-1) (\|x\|^2-r)^{2p-2}xx^T +  {4}p (\|x\|^2-r)^{2p-1} I_m,$$
where $I_m$ stands for the $m\times m$ identity matrix. Since 
$\nabla^2f(0)$ is negative-definite, $f$ is nonconvex. Moreover,
$\Gamma=\{x\;|\;\|x\|=\sqrt{r}\} \cup \{0\}$ and $\Theta=\{x\;|\;\|x\|=\sqrt{r}\}$. Therefore, for any $\overline{x} \in \Theta$ we have
\begin{eqnarray*}
&&d(x,\Theta)=\big|\|x\|-\sqrt{r}\big| = \frac{\big|\|x\|^2-r\big|}{\big|\|x\|+\sqrt{r}\big|} \le\Big(\frac{3}{2} \sqrt{r}\Big)^{-1} \big|\|x\|^2-r\big| 
\\
&&=\Big(\frac{3}{2} \sqrt{r}\Big)^{-1} \frac{[{4}p \big|\|x\|^2-r\big|^{2p-1} \|x\|]^{\frac{1}{2p-1}}}{({4}p\|x\|)^{\frac{1}{2p-1}}}
\le(\frac{3}{2} \sqrt{r} )^{-1} \frac{\|\nabla f(x)\|^{\frac{1}{2p-1}}}{({2}p\sqrt{r})^{\frac{1}{2p-1}}}\;\mbox{ whenever }\;x \in B_{\mathbb{R}^m}(\overline{x},\sqrt{r}/2),
\end{eqnarray*}
and thus $\nabla f$ satisfies the generalized metric subregularity with respect to $(\psi,\Xi)$ for $\psi(t)=t^{2p-1}$ and $\Xi=\Theta$.}
\end{example}

Let us now introduce the new property that is playing an important role below. Given a proper l.s.c.\ function $f\colon\mathbb{R}^m\to\Bar{\mathbb R}$, we say that $f$  has the {\em weak separation property} (WSP) at $\overline{x} \in \Gamma$ if there is $\varsigma>0$ with
\begin{equation}\label{se010}
f(y) \le f(\overline{x})\quad\mbox{for all }\;y\in \Gamma\cap B_{\mathbb{R}^m}(\overline{x},\varsigma),
\end{equation}
where $\Gamma$ is taken from \eqref{1stat}. The constant $\varsigma$ is called the {\em modulus of weak separation}. If the inequality in \eqref{se010} is replaced by equality, such property is frequently used in the study of error bounds under the name of ``separation property" (e.g., in \cite{lp,lts}), which inspires our terminology, While the separation property obviously yields WSP, the converse fails. For example,  consider $f(x):= - e^{\frac{-1}{x^2}} [\cos(\frac{1}{x})]^{2p}$ if $x \neq 0$ and $f(x):=0$ if $x =0$, where $p \in \mathbb{N}$. Take $\overline{x}=0$ and deduce from the definition that $f$ satisfies WSP at $\overline{x}$. On the other hand, $f(\frac{1}{2k \pi+\frac{\pi}{2}})=f(\frac{1}{2k \pi+\frac{3\pi}{2}})=0$ for  all $k \in \mathbb{N}$, and hence there exists $a_k \in (\frac{1}{2k \pi+\frac{3\pi}{2}},\frac{1}{2k \pi+\frac{\pi}{2}})$ such that $f'(a_k)=0$ and $f(a_k)<0=f(\overline{x})$, which tells us that the separation property fails.\vspace*{0.05in} 

The next proposition shows that a large class of functions satisfies the weak separation property.

\begin{proposition}[\bf sufficient conditions for weak separation]\label{le3.2} Let $f\colon\mathbb{R}^m\to\Bar{\mathbb R}$ be a proper l.s.c.\ function, and let $\overline{x} \in \Gamma$. Suppose that either one of the following conditions holds:
\begin{itemize}
\item[{\bf(i)}]  $f$ is continuous at $\overline{x}$ with respect to ${\rm dom}(\partial f)$, i.e., $\lim\limits_{\substack{z \in {\rm dom}(\partial f)}{z \rightarrow \overline{x}}}f(z)=f(\overline{x})$, and it satisfies the KL property at $\overline{x} \in \Gamma$.

\item[{\bf(ii)}] $f=\varphi \circ g$, where $\varphi:\mathbb{R}^n \rightarrow \mathbb{R}$ is a convex function, and where $g:\mathbb{R}^m \rightarrow \mathbb{R}^n$ is a smooth mapping such that the Jacobian matrix $\nabla g(\overline{x})$ is surjective.   
\end{itemize} Then $f$ enjoys the weak separation property at $\overline{x}$.
\end{proposition}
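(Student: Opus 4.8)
The plan is to handle the two sufficient conditions separately, as they rest on quite different mechanisms. For part (ii) the result is essentially a reformulation of Lemma~\ref{LE2.1}. Indeed, Lemma~\ref{LE2.1} furnishes $\delta>0$ such that $\Gamma\cap B_{\mathbb{R}^m}(\overline{x},\delta)\subseteq \arg\min_{x}f(x)$; since $\overline{x}\in\Gamma$ and $0\in\partial f(\overline{x})$, the argument of that lemma (applied at $u=\overline{x}$) also shows $\overline{x}\in\arg\min_{x}f(x)$. Hence for every $y\in\Gamma\cap B_{\mathbb{R}^m}(\overline{x},\delta)$ we get $f(y)=\min f=f(\overline{x})$, which even yields the stronger equality form of \eqref{se010}. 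So part (ii) requires only invoking Lemma~\ref{LE2.1} and noting $\varsigma:=\delta$ works.

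For part (i) I would run the standard ``KL argument along $\Gamma$'' by contradiction. Suppose WSP fails at $\overline{x}$: then there is a sequence $y_k\in\Gamma$ with $y_k\to\overline{x}$ and $f(y_k)>f(\overline{x})$. Each $y_k$ is a first-order stationary point, so $0\in\partial f(y_k)$ and therefore $d(0,\partial f(y_k))=0$; also $y_k\in{\rm dom}(\partial f)$, so by the continuity-relative-to-${\rm dom}(\partial f)$ hypothesis $f(y_k)\to f(\overline{x})$. Pick any $\varepsilon,\eta>0$ from the KL inequality at $\overline{x}$ (Definition~\ref{de1}(iii)). For $k$ large we have $y_k\in B_{\mathbb{R}^m}(\overline{x},\varepsilon)$ and, since $f(\overline{x})<f(y_k)<f(\overline{x})+\eta$, the point $y_k$ lies in $B_{\mathbb{R}^m}(\overline{x},\varepsilon)\cap[f(\overline{x})<f<f(\overline{x})+\eta]$. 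The KL inequality then forces
\[
\vartheta'\big(f(y_k)-f(\overline{x})\big)\,d\big(0,\partial f(y_k)\big)\ge 1,
\]
but the left-hand side equals $\vartheta'(f(y_k)-f(\overline{x}))\cdot 0=0$, a contradiction. This gives $\varsigma$ directly from the KL radius $\varepsilon$ (intersected with any radius ensuring the $f$-values stay below $f(\overline{x})+\eta$, which holds automatically by the continuity-on-${\rm dom}(\partial f)$ assumption for $k$ large).

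The only genuinely delicate point is making sure the approximating points $y_k$ actually satisfy the \emph{strict} value gap $f(\overline{x})<f(y_k)$ needed to place them in the KL region — if instead $f(y_k)\le f(\overline{x})$ for a subsequence, there is nothing to prove for that subsequence. So the clean way to organize the argument is: for the $y_k\in\Gamma\cap B_{\mathbb{R}^m}(\overline{x},\varsigma)$ violating \eqref{se010} we have $f(y_k)>f(\overline{x})$ by assumption, and then the displayed KL inequality applies and yields the contradiction $0\ge 1$. I expect no further obstacle; the continuity of $f$ relative to ${\rm dom}(\partial f)$ is exactly what is needed to guarantee $f(y_k)<f(\overline{x})+\eta$ for $k$ large so that the KL region is entered, and the stationarity $0\in\partial f(y_k)$ does the rest. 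I would close by stating that in both cases the asserted modulus $\varsigma$ is obtained, completing the proof.
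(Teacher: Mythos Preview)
Your proof is correct and follows essentially the same approach as the paper: for (i) you argue by contradiction exactly as the paper does, using continuity relative to ${\rm dom}(\partial f)$ to place the violating points $y_k$ in the KL region and then obtain the contradiction $0\ge 1$ from $0\in\partial f(y_k)$; for (ii) you invoke Lemma~\ref{LE2.1} just as the paper does, with the minor extra observation that the inclusion $\overline{x}\in\Gamma\cap B_{\mathbb{R}^m}(\overline{x},\delta)$ actually yields equality in \eqref{se010}.
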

\begin{proof} To verify (i), suppose on the contrary that WSP fails at $\ox$. Then there exists a sequence $y_k \rightarrow \overline{x}$ with $y_k \in \Gamma$ such that $f(y_k)>f(\overline{x})$ for all $k\in\mathbb N$. It follows from the continuity of $f$ at $\overline{x}$ with respect to ${\rm dom}(\partial f)$ that
$y_k \in \{x\;|\;f(\overline{x})<f(x)< f(\overline{x})+\eta\}$ for all large $k$, where $\eta$ is taken from the assumed KL property at $\overline{x}$. The latter condition implies that
\begin{equation*}
\vartheta'(f(y_k)-f(\overline{x}))d(0, \partial f(y_k))\geq 1
\end{equation*}
while contradicting $y_k \in \Gamma$ and hence justifying the claimed WSP. Assertion (ii) follows directly from Lemma~\ref{LE2.1}, which therefore completes the proof of the proposition.
\end{proof}

Another concept used below to study the second-order generalized metric subregularity applies to ${\cal C}^2$-smooth functions. Given such a function, recall that $\overline{x}$ is its {\em strict saddle point} if $\lambda_{\min}(\nabla^2 f(\overline{x}))<0$. We say that $f$ satisfies the {\em strict saddle point property} at $\overline{x} \in \Gamma$ if $\overline{x}$ is either a local minimizer for $f$, or a strict saddle point for $f$. If $f$ satisfies the strict saddle point property at any first-order stationary point, then we simply say that $f$ satisfies the strict saddle point property. Furthermore, if for any first-order stationary point $\overline{x}$ of $f$, it is either a global minimizer for $f$, or a strict saddle point for $f$, then we say that $f$ satisfies the {\em strong strict saddle point property}. It is known from Morse theory that strict saddle point property holds {\em generically} for ${\cal C}^2$-smooth functions. Moreover, it has been recognized in the literature that various classes of nonconvex functions arising in matrix completion, phase retrieval, neural networks, etc. enjoy the strict saddle point property; see some instructive examples below.\vspace*{0.03in}

Before establishing the main result of this section, we present one more proposition of its own interest telling us that, around a {\em local minimizer}, the distance to $\Gamma$, the set of first-order stationary points, agrees with the distance to $\Theta$, the set of second-order stationary points, {\em under WSP}. Let us emphasize that in general $\Gamma$ and $\Theta$ are {\em not the same} while the distances to $\Gamma$ and $\Theta$ are under the imposed assumptions.

\begin{proposition}[\bf distance to stationary points]\label{L34}
 Given a ${\cal C}^2$-smooth function $f\colon\mathbb{R}^m\to\mathbb R$ and $\overline{x}\in \mathop{\arg\min}_{x\in B_{\mathbb{R}^m}[\overline{x}, \alpha]}f(x)$ for some $\alpha>0$, suppose that WSP holds at $\overline{x}$ with modulus $\varsigma>0$. Denoting $\gamma:=\min\{\alpha, \varsigma\}$, we have the equality
\begin{equation}\label{se011}
d(x, \Theta)=d(x, \Gamma)=d\big(x, \mathop{\arg\min}_{x\in B_{\mathbb{R}^m}[\overline{x}, \gamma]}f(x)\big)=d\big(x,[f \le f(\overline{x})]\big)\;\mbox{ for all }\;x\in B_{\mathbb{R}^m}\big(\overline{x},\gamma/4\big).
\end{equation}
\end{proposition}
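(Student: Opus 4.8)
The plan is to establish the chain of equalities in \eqref{se011} by a sequence of set inclusions, exploiting that all four quantities are distances to sets that are squeezed between each other in a neighborhood of $\overline x$. First I would record the basic consequences of the hypotheses near $\overline x$. Since $\overline x$ is a local minimizer of $f$ over $B_{\mathbb{R}^m}[\overline x,\alpha]$ and $f$ is ${\cal C}^2$-smooth, we have $\nabla f(\overline x)=0$ and $\nabla^2 f(\overline x)\succeq 0$, so $\overline x\in\Theta\cap\Gamma$. Moreover, because $\overline x$ is a genuine minimizer on the closed ball $B_{\mathbb{R}^m}[\overline x,\gamma]\subset B_{\mathbb{R}^m}[\overline x,\alpha]$, the set $S:=\mathop{\arg\min}_{x\in B_{\mathbb{R}^m}[\overline x,\gamma]}f(x)$ is nonempty and contains $\overline x$, and every point of $S$ lying in the open ball $B_{\mathbb{R}^m}(\overline x,\gamma)$ is automatically a local (unconstrained) minimizer of $f$, hence lies in $\Theta\subseteq\Gamma$ and in $[f\le f(\overline x)]$.

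The key structural observation I would use is the following localization lemma: for every $x\in B_{\mathbb{R}^m}(\overline x,\gamma/4)$, each of the four sets $\Theta$, $\Gamma$, $S$, and $[f\le f(\overline x)]$ has its nearest point to $x$ lying inside $B_{\mathbb{R}^m}(\overline x,\gamma)$. Indeed, since $\overline x$ belongs to all four sets, $d(x,\cdot)\le\|x-\overline x\|<\gamma/4$ for each of them, so any near-minimizing point $y$ of the distance satisfies $\|y-\overline x\|\le\|y-x\|+\|x-\overline x\|<\gamma/4+\gamma/4+\varepsilon<\gamma$ for small $\varepsilon$; taking infimum we may restrict each distance computation to the intersection of the respective set with $B_{\mathbb{R}^m}(\overline x,\gamma)$. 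This is where WSP enters decisively: any $y\in\Gamma\cap B_{\mathbb{R}^m}(\overline x,\gamma)\subseteq\Gamma\cap B_{\mathbb{R}^m}(\overline x,\varsigma)$ satisfies $f(y)\le f(\overline x)$ by \eqref{se010}, and since also $y\in B_{\mathbb{R}^m}[\overline x,\gamma]$ with $f(\overline x)=\min_{B[\overline x,\gamma]}f$, we get $f(y)=f(\overline x)$, so $y\in S$; being a minimizer over the ball and lying in its interior, $y$ is a local minimizer of $f$, hence $\nabla^2 f(y)\succeq 0$ and $y\in\Theta$. Thus, intersected with $B_{\mathbb{R}^m}(\overline x,\gamma)$, the sets $\Theta$, $\Gamma$, $S$ coincide, and all are contained in $[f\le f(\overline x)]$, which in turn (intersected with the ball) is contained back in $S$ by the minimizer property. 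Combining the localization with these inclusions yields that all four distances from any $x\in B_{\mathbb{R}^m}(\overline x,\gamma/4)$ are equal.

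I expect the main obstacle to be the careful bookkeeping of radii in the localization step — verifying that the near-minimizer of each distance function indeed stays strictly inside $B_{\mathbb{R}^m}(\overline x,\gamma)$ rather than on its boundary, so that the ``local minimizer over the ball'' conclusion upgrades to ``unconstrained local minimizer,'' which is what licenses the passage from $\Gamma$ to $\Theta$ (one needs the point to be interior to invoke $\nabla^2 f\succeq 0$). Choosing the radius $\gamma/4$ for $x$ gives the slack needed: $\|x-\overline x\|<\gamma/4$ and $d(x,\cdot)<\gamma/4$ force near-optimal points within distance $<\gamma/2<\gamma$ of $\overline x$. A secondary subtlety is that $[f\le f(\overline x)]$ is a priori a global sublevel set, possibly with far-away components, but the localization lemma confines the relevant part to $B_{\mathbb{R}^m}(\overline x,\gamma)$ and there it collapses onto $S$; I would make this explicit to avoid any gap. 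With these points handled, the equalities follow by writing $d(x,A)=\inf\{\|x-y\|: y\in A\cap B_{\mathbb{R}^m}(\overline x,\gamma)\}$ for each $A\in\{\Theta,\Gamma,S,[f\le f(\overline x)]\}$ and using that these four truncated sets are identical.
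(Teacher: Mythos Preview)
Your proposal is correct and follows essentially the same approach as the paper's proof: both arguments use WSP to push $\Gamma\cap B_{\mathbb{R}^m}(\overline x,\gamma)$ into the local argmin set, use the interior-minimizer observation to pass to $\Theta$, and rely on a localization step (restricting the distance computation to the ball around $\overline x$) to close the loop. The only difference is organizational---you show the four sets coincide after intersecting with $B_{\mathbb{R}^m}(\overline x,\gamma)$ and then invoke localization once, whereas the paper writes a one-directional chain $d(x,[f\le f(\overline x)])\le d(x,S)\le d(x,\Gamma)\le d(x,\Theta)$ and closes it via $\arg\min_{B_{\mathbb{R}^m}(\overline x,\gamma/2)}f\subset\Theta$; the content is the same.
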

\begin{proof} It follows from \eqref{se010}, the fact that  $\overline{x}\in \mathop{\arg\min}_{x\in B_{\mathbb{R}^m}[\overline{x}, \alpha]}f(x)$, and the choice of $\gamma$ that
\begin{equation*}
\Gamma\cap  B_{\mathbb{R}^m}(\overline{x}, \gamma)\subset \mathop{\arg\min}_{x\in B_{\mathbb{R}^m}[\overline{x}, \gamma]}f(x)\subset [f \le f(\overline{x})]. 
\end{equation*}
This implies that for any $x\in B_{\mathbb{R}^m}(\overline{x},\gamma/2)$ we get  
\begin{equation}\label{l73}
\begin{aligned}
 &d(x, [f \le f(\overline{x})])\leq  d\big(x, \mathop{\arg\min}_{x\in B_{\mathbb{R}^m}[\overline{x}, \gamma]}f(x)\big)\leq\ d(x,  \Gamma\cap  B_{\mathbb{R}^m}(\overline{x}, \gamma))\\
&= \ \min\{d(x,  \Gamma\cap  B_{\mathbb{R}^m}(\overline{x}, \gamma)), d(x,  \Gamma\setminus  B_{\mathbb{R}^m}(\overline{x}, \gamma))\}
=\ d(x, \Gamma)\leq d(x, \Theta),
\end{aligned}
\end{equation}
where the first equality is due to $d(x,  \Gamma\cap  B_{\mathbb{R}^m}(\overline{x}, \gamma))<\gamma/2$ (by $\overline{x}\in \Gamma$) and $d(x,  \Gamma\setminus  B_{\mathbb{R}^m}(\overline{x}, \gamma)) \ge\gamma/2$. Recalling that $\overline{x}\in \mathop{\arg\min}_{x\in B_{\mathbb{R}^m}[\overline{x}, \alpha]}f(x)$ leads us to
\begin{equation*}
[f \le f(\overline{x})]\cap B_{\mathbb{R}^m}\left(\overline{x},\gamma/2\right)= \mathop{\arg\min}_{x\in B_{\mathbb{R}^m}\left(\overline{x},\gamma/2\right)}f(x)
\end{equation*} 
and to the distance estimate $\displaystyle d\big(x, \mathop{\arg\min}_{x\in B_{\mathbb{R}^m}\left(\overline{x},\gamma/2\right)}f(x)\big)\leq \|x-\overline{x}\|<\gamma/4$ for all $x\in B_{\mathbb{R}^m}\left(\overline{x}, \gamma/4\right)$. Therefore, taking $x$ from the above neighborhood of $\ox$ brings us to the equality
\begin{equation}\label{l74}
\begin{aligned}
d\big(x, [f \le f(\overline{x})]\big)= &\min\big\{d\big(x, \mathop{\arg\min}_{x\in B_{\mathbb{R}^m}\left(\overline{x},\gamma/2\right)}f(x)\big), d \big(x, [f \le f(\overline{x})]\setminus B_{\mathbb{R}^m}\left(\overline{x},\gamma/2\right)\big)\big\}\\=& \ d\big(x, \mathop{\arg\min}_{x\in B_{\mathbb{R}^m}\left(\overline{x},\gamma/2\right)}f(x)\big).
\end{aligned}
\end{equation}
Observing finally that $\mathop{\arg\min}_{x\in B_{\mathbb{R}^m}\left(\overline{x},\gamma/2\right)}f(x)\subset \Theta$, we deduce from \eqref{l73} and \eqref{l74} that
\begin{equation*}
\begin{aligned}
d(x, [f \le f(\overline{x})])\leq d\big(x, \mathop{\arg\min}_{x\in B_{\mathbb{R}^m}[\overline{x}, \gamma]}f(x)\big)\leq  d(x, \Gamma)\leq d(x, \Theta)
\leq d\big(x, \mathop{\arg\min}_{x\in B_{\mathbb{R}^m}\left(\overline{x}, \gamma/2\right)}f(x)\big)=d(x, [f \le f(\overline{x})])
\end{aligned}
\end{equation*}
for any $x\in B_{\mathbb{R}^m}\left(\overline{x}, \gamma/4\right)$, which justifies \eqref{se011} and thus completes the proof of the proposition.
\end{proof}

Now we are ready to derive sufficient conditions for generalized metric subregularity broadly used below.

\begin{theorem}[{\bf sufficient conditions for generalized metric subregularity}]\label{P3.3}  Let $f:\mathbb{R}^m\to \mathbb{R}$ be a ${\cal C}^2$-smooth function, let $\psi:\mathbb{R}_+\to \mathbb{R}_+$ be an admissible function, and let $\overline{x} \in \Theta$.  Suppose that both WSP and strict saddle point property hold for $f$ at $\overline{x}$. Then $\nabla f$ has the generalized metric subregularity property with respect to $(\psi, \Theta)$  at $\overline{x}$ if and only if $\nabla f$ has this property at $\overline{x}$ with respect to  $(\psi,\Gamma)$. In particular, if $f$ is a subanalytic ${\cal C}^2$-smooth function satisfying the strict saddle point property, then $\nabla f$ enjoys the generalized metric subregularity property with respect to $(\psi, \Theta)$  at $\overline{x} \in \Theta$, where $\psi(t)=t^{\gamma}$ for some $\gamma>0$.
\end{theorem}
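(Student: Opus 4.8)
The plan is to prove the theorem in two parts, first establishing the ``if and only if'' equivalence between generalized metric subregularity with respect to $(\psi,\Theta)$ and with respect to $(\psi,\Gamma)$, and then deducing the subanalytic special case from the general statement together with classical \L ojasiewicz inequalities.

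For the equivalence, the key observation is that $\Theta\subset\Gamma$ always holds, so $d(x,\Gamma)\le d(x,\Theta)$ pointwise; hence generalized metric subregularity at $\overline{x}$ with respect to $(\psi,\Theta)$ trivially implies the same with respect to $(\psi,\Gamma)$ (since $\psi$ is nondecreasing, as an admissible function need not be --- here I would use that $\psi$ is increasing, or more carefully reduce to the relevant monotone majorant; if the paper's admissible functions are not assumed monotone, I would instead argue directly via the distance comparison below). The substantive direction is the converse. Here the strict saddle point property forces $\overline{x}$ to be a local minimizer of $f$ (it cannot be a strict saddle since $\overline{x}\in\Theta$ means $\nabla^2 f(\overline{x})\succeq 0$, so $\lambda_{\min}(\nabla^2f(\overline{x}))\ge 0$, ruling out the strict saddle alternative). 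Consequently there is $\alpha>0$ with $\overline{x}\in\mathop{\arg\min}_{x\in B_{\mathbb{R}^m}[\overline{x},\alpha]}f(x)$, and WSP holds at $\overline{x}$ with some modulus $\varsigma>0$. Now Proposition~\ref{L34} applies and yields a radius $\gamma:=\min\{\alpha,\varsigma\}$ such that
\begin{equation}\label{pp-dist-eq}
d(x,\Theta)=d(x,\Gamma)\quad\mbox{for all }\;x\in B_{\mathbb{R}^m}\big(\overline{x},\gamma/4\big).
\end{equation}
Given \eqref{pp-dist-eq}, shrinking the subregularity neighborhood to $\delta':=\min\{\delta,\gamma/4\}$ immediately transfers the inequality $\psi(d(x,\Gamma))\le\kappa\,d(0,\partial f(x))=\kappa\|\nabla f(x)\|$ into $\psi(d(x,\Theta))\le\kappa\|\nabla f(x)\|$ on $B_{\mathbb{R}^m}(\overline{x},\delta')$, which is exactly generalized metric subregularity with respect to $(\psi,\Theta)$ at $\overline{x}$. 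This settles the equivalence.

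For the subanalytic ``in particular'' part, I would first invoke Proposition~\ref{le3.2}(i): a subanalytic ${\cal C}^2$-smooth function is continuous (indeed smooth) everywhere, and subanalytic functions satisfy the KL property at every point, so $f$ enjoys WSP at every first-order stationary point, in particular at $\overline{x}\in\Theta$. Combined with the assumed strict saddle point property, the first (equivalence) part reduces the claim to showing that $\nabla f$ has generalized metric subregularity with respect to $(\psi,\Gamma)$ at $\overline{x}$ with $\psi(t)=t^{\gamma}$ for some $\gamma>0$. Since $\overline{x}$ is a local minimizer (as argued above) and $f$ is subanalytic, the \L ojasiewicz gradient inequality gives $\theta\in[0,1)$, $c>0$, and a neighborhood $U$ of $\overline{x}$ with $|f(x)-f(\overline{x})|^{\theta}\le c\,\|\nabla f(x)\|$ on $U$; combining this with the \L ojasiewicz inequality relating $d(x,\Gamma)$ (equivalently $d(x,[f\le f(\overline{x})])$ near a local minimizer, again via Proposition~\ref{L34}) to a power of $f(x)-f(\overline{x})$ --- of the form $f(x)-f(\overline{x})\ge c'\,d(x,[f\le f(\overline{x})])^{\,r}$ for some $r\ge 1$ --- yields $d(x,\Gamma)^{\,r\theta}\le C\,\|\nabla f(x)\|$ locally, i.e.\ the desired exponent subregularity with $\gamma:=r\theta>0$ (if $\theta=0$, i.e.\ $\overline x$ is isolated in $\Gamma$, the estimate is trivially available with any $\gamma$). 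Alternatively, and more cleanly, one can quote Theorem~\ref{thm:gro} or the error-bound results of the references to pass directly from the subanalytic growth condition to the stated exponent-type metric subregularity with respect to $\Gamma$, then apply the equivalence.

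\textbf{Main obstacle.} The delicate point is not the equivalence itself --- that is a routine consequence of Proposition~\ref{L34} once one notices $\overline{x}$ must be a local minimizer --- but rather making the admissibility/monotonicity of $\psi$ interact correctly with the inequality $d(x,\Gamma)\le d(x,\Theta)$ in the ``easy'' direction, and, in the subanalytic case, extracting a \emph{uniform} exponent $\gamma$ valid in a full neighborhood of $\overline{x}$ from the (a priori pointwise) \L ojasiewicz exponents at nearby stationary points. The cleanest route around the latter is to localize everything at $\overline{x}$ using that $f$ restricted to a small ball has $\overline{x}$ as a global minimizer, apply the \L ojasiewicz gradient and distance inequalities there, and not worry about other stationary points at all --- the strict saddle property plus WSP guarantee that no competing behavior of $\Gamma$ versus $\Theta$ occurs near $\overline{x}$.
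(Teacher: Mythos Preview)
Your equivalence argument is correct and matches the paper's exactly: both observe that $\overline{x}\in\Theta$ together with the strict saddle point property forces $\overline{x}$ to be a local minimizer, whereupon Proposition~\ref{L34} gives $d(x,\Theta)=d(x,\Gamma)$ on a neighborhood of $\overline{x}$, making the two subregularity statements literally identical there. Your worry about monotonicity of $\psi$ in the ``easy'' direction is therefore unnecessary --- the equality of distances handles both directions at once without any appeal to $d(x,\Gamma)\le d(x,\Theta)$.

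For the subanalytic part you take a genuinely different route from the paper. The paper argues in one shot via the \L ojasiewicz factorization lemma: the functions $h_1(x)=\|\nabla f(x)\|$ and $h_2(x)=d(x,\Gamma)$ are subanalytic with $h_1^{-1}(0)\subset h_2^{-1}(0)$, so on any compact set $|h_2|\le c\,|h_1|^{\gamma_0}$ for some $\gamma_0\in(0,1]$, which is exactly $d(x,\Gamma)^{1/\gamma_0}\le C\,\|\nabla f(x)\|$. Your approach instead chains two classical \L ojasiewicz-type estimates --- the gradient inequality $|f(x)-f(\overline{x})|^{\theta}\le c\,\|\nabla f(x)\|$ and a growth inequality $f(x)-f(\overline{x})\ge c'\,d(x,[f\le f(\overline{x})])^{r}$ --- together with Proposition~\ref{L34} to identify $d(\cdot,\Gamma)$ with $d(\cdot,[f\le f(\overline{x})])$ locally. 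This is valid and arguably more transparent, since it invokes only the most familiar forms of the \L ojasiewicz inequality, at the cost of an extra step. One caution: your suggested alternative of quoting Theorem~\ref{thm:gro} does not apply here, since that result is stated for compositions $f=\varphi\circ g$ with $\varphi$ convex and $\nabla g(\overline{x})$ surjective, a structure not assumed in the present theorem.
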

\begin{proof}
Note first that by $\overline{x} \in \Theta$ we have $\nabla f(\overline{x})=0$ and $\nabla^2 f(\overline{x})\succeq 0$. Since $f$ satisfies the strict saddle point property, $\overline{x}$ must be a local minimizer for $f$. Therefore, the first conclusion of the theorem follows immediately from Proposition~\ref{L34}.

Suppose now that $f$ is a subanalytic ${\cal C}^2$-smooth function satisfying the strict saddle point property. As mentioned earlier, subanalytic functions satisfy the KL property \cite{BM}, and hence we deduce from Proposition~\ref{le3.2} that the weak separation property holds at $\overline{x}$. By \cite[Proposition 2.13(ii)]{bdl07}, the subanalytic property of $f$ ensures that the functions $h_1(x):= d(0,\nabla f(x))$ and $h_2(x):=d(x,\Gamma)$ are subanalytic. Observing that {$h_1^{-1}(0) \subset  h_2^{-1}(0)$}, we deduce from the Lojasiewicz factorization lemma for subanalytic functions in \cite[Theorem 6.4]{BM} that for each compact set $K$ there exist $c>0$ and $\gamma_0 \in (0,1]$ such that
$|h_2(x)| \le c|h_1(x)|^{\gamma_0}$ whenever $x\in K$. This verifies the second conclusion of the theorem and thus completes the proof.
\end{proof}

Next we provide some sufficient conditions for the {\em exponent metric subregularity} with explicit exponents for second-order stationary sets.

\begin{corollary}[\bf exponent metric subregularity for second-order stationary points]\label{coro3.1} Let $f\colon\mathbb{R}^m\to\mathbb R$ be a ${\cal C}^2$-smooth function, and let $\overline{x} \in \Theta$ for the second-order stationary set \eqref{2stat}. Suppose that either one of the following conditions holds:
\begin{itemize}
\item[\bf(i)] $f$ satisfies the strict saddle point property and the KL property at $\overline{x}$ with the KL exponent $\theta$.

\item[\bf(ii)] $f=\varphi \circ g$, where $\varphi:\mathbb{R}^n \rightarrow \mathbb{R}$ is a convex function satisfying the KL property at $g(\overline{x})$ with the KL exponent $\theta$, and where $g:\mathbb{R}^m \rightarrow \mathbb{R}^n$ is a smooth mapping with the surjective derivative $\nabla g(\overline{x})$.
\end{itemize}
Then $\nabla f$ enjoys the generalized metric subregularity property  at $\overline{x}$ with respect to $(\psi,\Theta)$, where $\psi(t)=t^{\frac{\theta}{1-\theta}}$.
\end{corollary}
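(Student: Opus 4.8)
The overall strategy in both cases is to establish $d(0,\nabla f(x))\ge\kappa\,d(x,\Theta)^{\theta/(1-\theta)}$ for all $x$ near $\overline{x}$ by chaining two ingredients: the KL inequality, which bounds $d(0,\partial f(x))$ below by the $\theta$-th power of $f(x)-f(\overline{x})$, and a power-type growth estimate, which bounds $f(x)-f(\overline{x})$ below by the $\tfrac{1}{1-\theta}$-th power of $d(x,[f\le f(\overline{x})])$; it then remains to identify $[f\le f(\overline{x})]$ with $\Theta$ locally. For part~(i): since $\overline{x}\in\Theta$ gives $\nabla^2 f(\overline{x})\succeq 0$, the strict saddle point property forces $\overline{x}$ to be a local minimizer of $f$; as $f$ is ${\cal C}^2$ (hence continuous) and satisfies the KL property at $\overline{x}\in\Gamma$, Proposition~\ref{le3.2}(i) gives the weak separation property at $\overline{x}$, and Proposition~\ref{L34} then yields $\gamma>0$ with $d(x,\Theta)=d(x,\Gamma)=d(x,[f\le f(\overline{x})])$ on $B_{\mathbb{R}^m}(\overline{x},\gamma/4)$. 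Writing the KL inequality with $\vartheta(t)=c\,t^{1-\theta}$, we have $c(1-\theta)(f(x)-f(\overline{x}))^{-\theta}\|\nabla f(x)\|\ge 1$ for $x$ near $\overline{x}$ with $f(x)>f(\overline{x})$ (the requirement $f(x)<f(\overline{x})+\eta$ being automatic near $\overline{x}$ by continuity, and the case $f(x)=f(\overline{x})$ being trivial since then $x$ is itself a local minimizer, so $\nabla f(x)=0$ and $d(x,\Theta)=0$). I would next invoke the well-known consequence of the KL inequality at a local minimizer that $d(x,[f\le f(\overline{x})])\le\vartheta(f(x)-f(\overline{x}))=c(f(x)-f(\overline{x}))^{1-\theta}$ near $\overline{x}$ (via a curve-of-maximal-slope / steepest-descent length estimate, since $\|\nabla(\vartheta\circ(f-f(\overline{x})))\|\ge 1$ on that region; cf.\ \cite{ab09,bdl07}). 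Rearranging this gives $f(x)-f(\overline{x})\ge c^{-1/(1-\theta)}d(x,[f\le f(\overline{x})])^{1/(1-\theta)}$; substituting into the KL inequality and using $d(x,[f\le f(\overline{x})])=d(x,\Gamma)$ yields $\|\nabla f(x)\|\ge\kappa\,d(x,\Gamma)^{\theta/(1-\theta)}$ near $\overline{x}$. Thus $\nabla f$ has generalized metric subregularity at $\overline{x}$ with respect to $(\psi,\Gamma)$ with $\psi(t)=t^{\theta/(1-\theta)}$, and Theorem~\ref{P3.3} (whose hypotheses, WSP and the strict saddle property at $\overline{x}$, have been verified) --- or simply the equality $d(x,\Theta)=d(x,\Gamma)$ from Proposition~\ref{L34} --- upgrades this to the property with respect to $(\psi,\Theta)$.

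For part~(ii), I would start from Lemma~\ref{LE2.1}: there are $\delta,L>0$ such that on $B_{\mathbb{R}^m}(\overline{x},\delta)$ we have $\partial f(x)=\nabla g(x)^T\partial\varphi(g(x))$, $\|\nabla g(x)^T v\|\ge L\|v\|$ for all $v$, and $\Gamma\cap B_{\mathbb{R}^m}(\overline{x},\delta)\subseteq\mathop{\arg\min}_{x\in\mathbb{R}^m}f(x)$. Since $\overline{x}\in\Theta\subseteq\Gamma$, writing $0=\nabla g(\overline{x})^Tv$ with $v\in\partial\varphi(\bar y)$, $\bar y:=g(\overline{x})$, forces $v=0$, so $0\in\partial\varphi(\bar y)$; hence $\bar y$ minimizes the convex $\varphi$ and $\min_{x\in\mathbb{R}^m}f(x)=\min\varphi=\varphi(\bar y)=f(\overline{x})$. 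Running the argument of part~(i) for $\varphi$ at its minimizer $\bar y$ (with $[\varphi\le\varphi(\bar y)]=\mathop{\arg\min}\varphi$) produces $d(0,\partial\varphi(y))\ge c_1\,d(y,\mathop{\arg\min}\varphi)^{\theta/(1-\theta)}$ for $y$ near $\bar y$; combined with $d(0,\partial f(x))\ge L\,d(0,\partial\varphi(g(x)))$ (from $\partial f(x)=\nabla g(x)^T\partial\varphi(g(x))$ and $\|\nabla g(x)^Tv\|\ge L\|v\|$) this gives $d(0,\nabla f(x))\ge Lc_1\,d(g(x),\mathop{\arg\min}\varphi)^{\theta/(1-\theta)}$ near $\overline{x}$. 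Finally, surjectivity of $\nabla g(\overline{x})$ makes $g$ metrically regular at $\overline{x}$, so $d(x,g^{-1}(\mathop{\arg\min}\varphi))\le\kappa_0\,d(g(x),\mathop{\arg\min}\varphi)$ near $\overline{x}$; and $g^{-1}(\mathop{\arg\min}\varphi)=\{x:f(x)=\min_{x\in\mathbb{R}^m}f(x)\}=\mathop{\arg\min}_{x\in\mathbb{R}^m}f(x)\subseteq\Theta$ because every global minimizer of the ${\cal C}^2$ function $f$ satisfies the second-order necessary conditions. Hence $d(x,\Theta)\le\kappa_0\,d(g(x),\mathop{\arg\min}\varphi)$, and combining the last displays yields $d(0,\nabla f(x))\ge\kappa\,d(x,\Theta)^{\theta/(1-\theta)}$ near $\overline{x}$, i.e.\ the claimed generalized metric subregularity with $\psi(t)=t^{\theta/(1-\theta)}$.

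The main obstacle is the power-growth estimate $d(x,[f\le f(\overline{x})])\le\vartheta(f(x)-f(\overline{x}))$: this is the only ingredient not contained in the excerpt, and it has to be invoked (or re-derived from the steepest-descent length bound for $\vartheta\circ(f-f(\overline{x}))$) carefully, so that its neighborhood is intersected with the neighborhoods coming from the KL property, from Proposition~\ref{L34}, and from the local-minimizer property. A secondary technical point, specific to part~(ii), is the metric-regularity transfer $d(x,\Theta)\le\kappa_0 d(g(x),\mathop{\arg\min}\varphi)$ together with the identification $g^{-1}(\mathop{\arg\min}\varphi)=\mathop{\arg\min}_{x\in\mathbb{R}^m}f(x)\subseteq\Theta$; once these are in place, what remains is routine bookkeeping with the exponents.
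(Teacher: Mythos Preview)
Your argument for part~(i) is essentially identical to the paper's: both chain the KL inequality $d(0,\nabla f(x))\ge c^{-1}(f(x)-f(\overline{x}))^\theta$ with the growth estimate $d(x,[f\le f(\overline{x})])\le\alpha(f(x)-f(\overline{x}))^{1-\theta}$ (the paper cites \cite[Lemma~6.1]{LM} for precisely the step you flag as the main obstacle), and then invoke Propositions~\ref{le3.2} and~\ref{L34} to replace $[f\le f(\overline{x})]$ by $\Theta$ locally.

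For part~(ii) your route is genuinely different. The paper does not transfer through $g$ at all; instead it invokes \cite[Theorem~3.2]{lp}, a KL-exponent calculus result for compositions, to conclude that $f=\varphi\circ g$ itself satisfies the KL property at $\overline{x}$ with the same exponent~$\theta$, observes via Lemma~\ref{LE2.1} that $\overline{x}$ is a (global) minimizer so the strict saddle point property holds at $\overline{x}$, and then simply applies part~(i) to~$f$. Your argument is correct and more self-contained---it avoids the external KL-calculus citation by running the part-(i) reasoning for $\varphi$ at $\bar y$ and pulling the resulting bound back through $d(0,\partial f(x))\ge L\,d(0,\partial\varphi(g(x)))$ and metric regularity of $g$---but it is longer and needs the extra identification $g^{-1}(\arg\min\varphi)=\arg\min f\subseteq\Theta$. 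The paper's version is a two-line reduction to~(i) at the cost of one reference; yours trades that reference for a direct Lyusternik--Graves argument.
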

\begin{proof} For (i), we get by the KL property of $f$ at $\overline{x}$ with the exponent $\theta$ that there are $c,\eta,\epsilon>0$ such that
\begin{equation}\label{kl0}
d(0, \nabla f(x))\geq c^{-1}\big(f(x)-f(\overline{x})\big)^{\theta}
\end{equation}
whenever $x\in B_{\mathbb{R}^m}(\overline{x}, \epsilon)\cap [f(\overline{x})<f<f(\overline{x})+\eta]$. Using the continuity of $f$ and shrinking $\epsilon$ if needed tell us that \eqref{kl0} holds for all $x\in B_{\mathbb{R}^m}(\overline{x}, \epsilon)\cap [f>f(\overline{x})]$. This together with \cite[Lemma 6.1]{LM} implies that there exist $\alpha>0$ and $\epsilon_0 \in (0,\epsilon)$ such that
\begin{equation*}
d(x, [f \le f(\overline{x})]) \le \alpha \, \big(f(x)-f(\overline{x})\big)^{1-\theta}\;\mbox{ for all }\;x \in B_{\mathbb{R}^m}(\overline{x}, \epsilon_0).
\end{equation*}
Shrinking $\epsilon_0$ again if necessary, we deduce from Propositions~\ref{le3.2} and  \ref{L34} that
\begin{equation*}
d(x,\Theta) \le \alpha \, \big(f(x)-f(\overline{x})\big)^{1-\theta}\;\mbox{ whenever }\;x \in B_{\mathbb{R}^m}(\overline{x}, \epsilon_0).
\end{equation*}
Thus there exists $M>0$ with $d(x,\Theta) \le M \, d\big(0, \nabla f(x)\big)^{\frac{1-\theta}{\theta}}$ for all $x \in B_{\mathbb{R}^m}(\overline{x}, \epsilon_0)$, and so (i) holds.

To justify (ii) under the imposed assumptions, it follows from
\cite[Theorem~3.2]{lp} that $f$ satisfies the KL property at $\overline{x}$ with the KL exponent $\theta$. Since $\overline{x} \in \Theta \subset \Gamma$, we get from Lemma~\ref{LE2.1} that $f$ satisfies the strict saddle point property at $\overline{x}$, and therefore the claimed conclusion follows from part (i).
\end{proof}

In the rest of this section, we discuss three practical models that often appear in, e.g., {\em machine learning}, {\em image processing}, and {\em statistics} for which the obtained sufficient conditions allow us to verify the second-order generalized metric subregularity.

\begin{example}[\bf over-parameterization of compressed sensing models]\label{ex:sensing}  {\rm Consider the {\em $\ell_1$-regularization problem}, known also as the {\em Lasso problem}: 
\begin{equation} \label{eq:L1}
\min_{x \in \mathbb{R}^m} \|Ax-b\|^2+ \nu \|x\|_1,
\end{equation}
where $A \in \mathbb{R}^{n\times m}$, $b \in \mathbb{R}^n$, $\nu>0$, and $\|\cdot\|_1$ is the usual $\ell_1$ norm. A recent interesting way to solve this problem \cite{tk24,poon,Poon} is to transform it into an {\em equivalent smooth problem } 
\begin{equation} \label{eq:OP}
\min_{(u,v) \in \mathbb{R}^m \times \mathbb{R}^m} f_{OP}(u,v):=\|A (u \circ v)-b\|^2+ \nu (\|u\|^2+\|v\|^2),
\end{equation}
where $u \circ v$ is the Hadamard (entrywise) product between the vector $u$ and $v$ in the sense that $(u\circ v)_i:=u_i v_i$, $i=1,\ldots,m$. A nice feature of this over-parameterized model is that its objective function is a polynomial (and so ${\cal C}^2$-smooth and subanalytic), and it satisfies the {\em strong strict saddle point property}; see \cite[Appendix C]{poon} for the detailed derivation, and \cite{tk24,Poon} for more information.

To proceed, we first use Theorem~\ref{P3.3} to see that the generalized metric subregularity holds for $\nabla f_{OP}$ with respect to $(\psi,\Theta)$, where $\psi(t)=t^\gamma$ for some $\gamma>0$. In fact, it is possible to identify the exponent $\gamma$ explicitly. As shown in \cite{tk24}, the fulfillment of the {\em strict complementarity condition} 
\begin{equation}\label{eq:strict_CP}
0 \in 2 A^T(A \overline{x}-b) + {\rm ri} \,\big(  \nu \, \partial \|\cdot\|_1 (\overline{x})\big),
\end{equation} 
where `${\rm ri}$' denotes the relative interior, ensures that $f_{OP}$ satisfies the KL property at any $\overline{x} \in \Gamma$ with the KL exponent $1/2$. In the absence of the strict complementarity condition, it is shown in \cite{tk24} that $f_{OP}$ satisfies the KL property at any $\overline{x} \in \Gamma$ with the KL exponent $3/4$.

Therefore, Corollary~\ref{coro3.1}, tells us that for any $\overline{x} \in \Theta$ satisfying the strict complementarity condition \eqref{eq:strict_CP}, $\nabla f_{OP}$ enjoys the standard metric subregularity property (as $\psi(t)=t$) at $\overline{x}$ with respect to $\Theta$. In the absence of strict complementarity condition, $\nabla f_{OP}$ satisfies the generalized metric subregularity property at $\overline{x}$ with respect to $(\psi,\Theta)$, where $\psi(t)=t^3$.}
\end{example}

\begin{example}[\bf best rank-one matrix approximation]\label{ex-rank} {\rm Consider the following function $f:\mathbb{R}^m \rightarrow \mathbb{R}$ that appears in the {\em rank-one matrix approximation}
\begin{equation*}
f(x):=\frac{1}{2}\|xx^T-M\|^2_{F}=\frac{1}{2}\sum_{i,j=1}^m (x_ix_j-M_{ij})^2,
\end{equation*}
where we suppose for simplicity that $M$ is a symmetric $(m\times m)$ matrix with its maximum eigenvalue $\lambda_{\max}(M)>0$.
Direct verification shows that
\begin{equation*}
\nabla f(x)=\|x\|^2x-Mx \mbox{ and } \nabla^2 f(x)=2xx^T-M+\|x\|^2 I_m,
\end{equation*}
and hence we have $\Gamma=\{x\in\mathbb{R}^m\;|\;Mx=\|x\|^2 x\}$. Let us check next that $\Theta = \Omega$, where 
\begin{eqnarray*}
\Omega:=\big\{x\in\mathbb{R}^m\;\big|\; x \mbox{ is an eigenvector of } M \mbox{ associated with the largest eigenvalue} \mbox{ and } \|x\|=\sqrt{\lambda_{\max}(M)}\big\}.
\end{eqnarray*} 
Indeed, take $x \in \Theta$ and observe that if $x=0$, then $\nabla^2 f(x)=-M$ having a negative eigenvalue. This is impossible by $x \in \Theta$. For $x \neq 0$, $x$ is an eigenvector of $M$ with the associated eigenvalue $\lambda=\|x\|^2$. Suppose that $\lambda<\lambda_{\max}(M)$ and get, for any unit eigenvector $d$ associated with $\lambda_{\max}(M)$, that
\begin{equation*}
0\le d^T\nabla^2 f(x)d = 2(x^Td)^2 -d^TMd +\|x\|^2 \|d\|^2=- \lambda_{\max}(M) + \lambda.
\end{equation*}
This is also impossible, and so $\lambda=\lambda_{\max}(M)$ implying that $\|x\|=\sqrt{\lambda_{\max}(M)}.$ Thus $\Theta \subset \Omega$.

Now take $x \in \Omega$ and see that for any $d \in \mathbb{R}^m$ we have
\begin{eqnarray*}
d^T(\nabla^2 f(x))d &=& 2 (x^Td)^2-d^TMd+\|x\|^2 \|d\|^2 \\
& \ge & 2(x^Td)^2 - \lambda_{\max}(M) \|d\|^2 +\lambda_{\max}(M) \|d\|^2 \ge 0,
\end{eqnarray*}
which shows that the reverse inclusion holds, and the claim follows.

Suppose next that the largest eigenvalue $\lambda_{\max}(M)$ is simple, i.e., its geometric multiplicity is one; note that this condition holds generically. Then the eigenspace of $M$ associated with $\lambda_{\max}(M)$ is one-dimensional, and so $\Theta$ consists only of two elements $\{\pm v\}$, where $v$ is the eigenvector associated with the largest eigenvalue, and of length $\sqrt{\lambda_{\max}(M)}$. Both of these two elements are global minimizers of $f$. Hence in this case, $f$ satisfies the strong strict saddle point property. Furthermore, it follows from Theorem~\ref{P3.3} that the generalized metric subregularity holds for $\nabla f$ with respect to $(\psi,\Theta)$, where $\psi(t)=t^\gamma$ for some $\gamma>0$. Indeed, direct computations show that at $\bar x=\pm v$, where $v$ is the eigenvector associated with the largest eigenvalue, we have $\nabla^2 f(\bar x)=2vv^T-M+\|v\|^2 I_m=2vv^T-M+ \lambda_{\max}(M) I_m$. This tells us that $d^T\nabla^2 f(\bar x)d \ge 0$ for all $d \in \mathbb{R}^m$, and that $d^T\nabla^2 f(\bar x)d = 0$ if and only if $v^Td=0$ and $Md=\lambda_{\max}(M)d$, which happens only when $d=0$. Therefore, $\nabla^2 f(\bar x) \succ 0$, and so the metric subregularity holds for $\nabla f$ with respect to $\Theta$ at any $\overline{x} \in \Theta$.}
\end{example}

\begin{example}[\bf generalized phase retrieval problem]\label{ex:retr} {\rm Consider the optimization problem
$$
\min_{x \in \mathbb{R}^m} f(x)= \sum_{i=1}^n \varphi\big(g_i(x)\big),
$$
where $\varphi: \mathbb{R} \rightarrow \mathbb{R}$ is a ${\cal C}^2$ strictly convex subanalytic function such that $\varphi'(0)=0$, and where $g_i:\mathbb{R}^m \rightarrow \mathbb{R}$ are defined by
$g_i(x):=\frac{1}{\gamma_i}[(a_i^Tx)^2-b_i^2]$
with $\gamma_i >0$, $a_i \in \mathbb{R}^m \backslash \{0\}$, and $b_i \in \mathbb{R} \backslash \{0\}$, $i=1,\ldots,n$. We consider the two {\em typical choices} for $\gamma_i$ as $\gamma_i=1$ and $\gamma_i=|b_i|$, $i=1,\ldots,n$, and the two typical choices for $\varphi$ given by:
\begin{itemize}
\item the {\em polynomial loss function} $\varphi_{PL}(t)=t^{2p}$ with $p \ge 1$;
\item the {\em pseudo-Huber loss function} in the form
$\varphi_{PHL}(t)= \delta^2 \bigg(\sqrt{1+\frac{t^2}{\delta^2}}-1 \bigg)$ with $\delta>0$.
\end{itemize}
This class of problems arises in {\em phase retrieval models} (see \cite{Cai2022,Cai2023,yzs}) when the goal is to recover an unknown signal from the observed magnitude of some linear transformation such as, e.g., the Fourier transform.

Below we consider the {\em two settings}: {\bf(I)} the vectors $\{a_1,\ldots,a_n\}$ are linearly independent; {\bf(II)} the vectors
$\{a_1,\ldots,a_n\}$ are independent and identically distributed (i.i.d.) standard Gaussian random vectors, $b_i=a_i^Tu$ for some {$u \in \mathbb{R}^m \setminus\{0\}$} and $\varphi(t)=t^2$. Note that in the first setting, the linear independence condition can be easily verified, and it is usually satisfied  when $m$ is much larger than $n$. The second setting is also frequently considered in the literature; see, 
e.g., \cite{Cai2022,Cai2023,yzs}.  

First we consider setting (I), where $\{a_1,\ldots,a_n\}$ is linearly independent, to verify that $\nabla f$ satisfies the generalized metric subregularity property with respect to the second-order stationary set $\Theta$. Observe that
\begin{eqnarray*}
\nabla f(x) &=& \sum_{i=1}^n \varphi'(g_i(x)) \nabla g_i(x)  =  \sum_{i=1}^n \frac{2}{\gamma_i} \, \big[ (a_i^Tx) \, \varphi'\big(g_i(x)\big) \big] \, a_i,
\end{eqnarray*}
and thus it follows from the linear independence condition that
\begin{equation*}
\Gamma=\big\{x\;\big|\;\nabla f(x)=0\big\}=\bigcap_{i=1}^n\Big\{x\;\Big|\;a_i^Tx=0\;\mbox{or}\;\varphi' \big(g_i(x)\big)=0\Big\},
\end{equation*}
and for all $x \in \mathbb{R}^m$ we have
$\nabla^2 f(x)=  \sum_{i=1}^n \frac{2}{\gamma_i} \, \big\{ \varphi' \big(g_i(x)\big)  + {\frac{2}{\gamma_i}}(a_i^Tx)^2\,  \varphi''(g_i(x)) \big\} \, a_ia_i^T.$ Let us now show that 
\begin{equation}\label{theta3}
\Theta=\big\{x\;\big|\;\varphi' \big(g_i(x)\big)=0,\;i=1,\ldots,n\big\}.
\end{equation}
The inclusion ``$\supset$" in \eqref{theta3} is obvious. To check the reverse inclusion, pick $x \in \Theta$ and suppose that there exists $i_0$ such that $\varphi' \big(g_{i_0}(x)\big) \neq 0$. By $\Theta \subset \Gamma$,  we have $a_{i_0}^T x=0$, which tells us that $g_{i_0}(x)=(a_{i_0}^T x)^2-b_i^2 =-b_i^2<0$. Since $\varphi'(0)=0$ and $\varphi$ is strictly convex (and hence $\varphi'$ is strictly increasing), it follows that $\varphi'(g_{i_0}(x))< \varphi'(0)=0$. For $d=a_{i_0}$, we then have
$d^T\nabla^2 f(x)d=  {\frac{2}{\gamma_i} }\varphi'(g_{i_0}(x)) \|a_{i_0}\|^{4}<0$, which contradicts the assumption that $x \in \Theta$ and thus justifies \eqref{theta3}. Observe further that
\begin{eqnarray*}
\Theta =\big\{x\;\big|\;\varphi' \big(g_i(x)\big)=0,\;i=1,\ldots,n\big\}
=\big\{x\;\big|\;g_i(x)=0,\;i=1,\ldots,n\big\} 
=\big\{x\;\big|\; |a_i^Tx|= |b_i|,\;i=1,\ldots,n\big\}
\end{eqnarray*}
telling us that $\Theta$ reduces to the set of global minimizers. Therefore, the strong strict saddle point property holds for $f$ if the linear independence condition is satisfied. Then it follows from Theorem~\ref{P3.3} that generalized metric subregularity is fulfilled for $\nabla f$ with respect to $(\psi,\Theta)$, where $\psi(t)=t^\gamma$ for some $\gamma>0$.

Let us further specify the exponent $\gamma$ in both bullet cases above by using Corollary~\ref{coro3.1}. In the {\em polynomial case} $\varphi(t)=t^{2p}$ with $p \ge 1$, the direct verification shows that $|\varphi'(t)| \ge c \, \varphi(t)^{\frac{2p-1}{2p}}$ for some $c>0$. Thus $\varphi$ satisfies the KL property with the KL exponent $1-\frac{1}{2p}$. Let $g(x)=(g_1(x),\ldots,g_n(x))$ for all $x \in \mathbb{R}^m$, where the components $g_i$ are defined above. We deduce from the previous considerations that the inclusion $x \in \Theta$ implies that $a_i^Tx \neq 0$ whenever $i=1,\ldots,n$. Hence for any $\overline{x}\in \Theta$, the derivative operator $\nabla g(\overline{x}): \mathbb{R}^m \rightarrow \mathbb{R}^n$ is surjective by the linear independence assumption. Corollary~\ref{coro3.1} tells us in this case that the
generalized metric subregularity holds for $\nabla f$ with respect to $(\psi,\Theta)$, where $\psi(t)=t^{2p-1}$.

In the case where $\varphi$ is the {\em pseudo-Huber loss}, we have $\varphi''(t)>0$ for all $t \in \mathbb{R}$, and it is not hard to verify that the function $\overline{\varphi}(y):=\sum_{i=1}^n \varphi (y_i)$ satisfies the KL property at $\overline y=(g_1(\overline{x}),\ldots,g_n(\overline{x}))$ with the KL exponent $1/2$. Note also that $f(x):=(\overline \varphi \circ g)(x)$ with $g(x)=(g_1(x),\ldots,g_n(x))$ for all $x \in \mathbb{R}^m$, and so Corollary~\ref{coro3.1} implies that the standard metric subregularity
holds for $\nabla f$ at any $\overline{x} \in \Theta$ with respect to $\Theta$.

Next we consider the second setting (II).  As shown in \cite{Cai2022} in the typical case of $\gamma_i=|b_i|=|a_i^T u|$, there exist positive constants $c,C$ such that if $n \ge  C m$, where $n$ is the number of samples and $m$ is the dimension of the underlying space, then with probability at least $1-e^{-c \,  n}$ the only local minimizers of $f$ are $\pm u$ (which are also global minimizers),  $f$ is strongly convex in a neighborhood of $\pm u$, and all the other critical points are either strict saddle points or local maximizers with strictly negative definite Hessians. Thus in this setting, when the number of samples $n$ is sufficiently large, we get with high probability that $\Theta =\{\pm u\}$, and that for any $\overline{x} \in \Theta$ the strong saddle point property holds at $\overline{x}$. It follows therefore from Corollary~\ref{coro3.1} that the standard metric subregularity is satisfied for $\nabla f$ at $\overline{x}$  with respect to $\Theta$. Note finally that in the case where $\gamma_i=1$, a similar result on the  saddle point property is obtained  in \cite{Cai2023} under the stronger assumption that $n \ge C m \log(m)$ for some positive number $C$.}
\end{example}\vspace*{-0.2in}

\section{High-order Regularized Newton Method with Momentum}\label{sec:alg}
\setcounter{equation}{0}\vspace*{-0.05in}

In this section, we design a new high-order Newtonian method with momentum and justify its fast convergence by using the above results on generalized metric regularity and related investigations.\vspace*{0.05in}

For the optimization problem $\min_{x \in \mathbb{R}^m} f(x)$, the following assumptions are imposed:
\begin{assumption}[\bf major assumptions]\label{A3.1}
The objective function $f$ satisfies the conditions:

\item{\bf(1)} $f$ is ${\cal C}^2$-smooth and bounded below, i.e., $f^*=\inf_{x\in \mathbb{R}^m}f(x)>-\infty$.

\item{\bf(2)} There exists a closed convex set $\mathcal{F} \subset \mathbb{R}^m$ with nonempty interior such that $\mathcal{L}(f(x_0))\subset {\rm int}(\mathcal{F})$, where $x_0\in {\rm int}(\mathcal{F})$ is a starting point of the iterative process. 

\item{\bf(3)} The gradient $\nabla f$ is Lipschitz continuous with modulus $L_1>0$ on $\mathcal{F}$, and
the Hessian of $f$ is H\"{o}lder-continuous on $\mathcal{F}$ with exponent $q$, i.e.,  there exist numbers $L_2>0$ and $q\in (0,1]$ such that
 $\|\nabla^2f(x)-\nabla^2f(y)\|\leq L_2 \|x-y\|^q$ for all $x, y\in \mathcal{F}$.
\end{assumption}

Given $x\in \mathbb{R}^m$ and $q\in (0, 1]$, define the {\em $(q+2)$th-order regularized quadratic approximation} for $f$ at $x$ by
\begin{equation}\label{A88}
f_{\sigma}(y;x):=f(x)+\left\langle\nabla f(x), y-x\right\rangle+\frac{1}{2}\langle \nabla^2 f(x)(y-x), y-x\rangle+\frac{\sigma}{(q+1)(q+2)}\|y-x\|^{q+2},
\end{equation}
where $\sigma>0$ serves as a regularization parameter. Consider also
\begin{equation}\label{A89}
\overline{f}_{\sigma}(x):= \min_{y \in \mathbb{R}^m} f_{\sigma}(y; x) \quad \text{and} \quad y_{\sigma}(x) \in \mathop{\arg\min}_{y \in \mathbb{R}^m} f_{\sigma}(y; x).
\end{equation}

Here is our high-order Newtonian method with momentum steps. 

\begin{algorithm}[H]\label{ALg}
\caption{High-order regularization method with momentum} 
\hspace*{0.02in}
\begin{algorithmic}[htpb]
\State~1: {\bf Input:} initial point\; $x_0=\widehat x_0\in \mathbb{R}^m$,\;scalars\; $\overline{\sigma}\in \left(\frac{2L_2}{q+2}, L_2\right]$\; and\; $\zeta\in [0,1)$.
\State~2: {\bf for} {$k=0, 1, \dots$}\;{\bf do}
\State~3: {\bf Regularization step:} Choose $\sigma_k\in [\overline{\sigma}, 2L_2]$ 
and find
\begin{equation}\label{116}
\widehat x_{k+1}:=y_{\sigma_k}(x_k) \in \mathop{\arg\min}_{y \in \mathbb{R}^m} f_{\sigma_k}(y; x_k).
\end{equation}
\State~4: {\bf Momentum step:}
 \begin{equation}\label{5.025}
\beta_{k+1}=\min\big\{\zeta, \|\nabla f(\widehat x_{k+1})\|, \|\widehat x_{k+1}-x_k\|\big\},
\end{equation}
\begin{equation}\label{5.026}
\widetilde x_{k+1}=\widehat x_{k+1}+\beta_{k+1}(\widehat x_{k+1}-\widehat x_{k}).
\end{equation}
\State~5: {\bf Monotone step:}
\begin{equation}\label{5.27}
x_{k+1}=\mathop{\arg\min}_{x\in \{\widehat x_{k+1}, \widetilde x_{k+1}\}} f(x).
\end{equation}
\State~6: {\bf end for}
\end{algorithmic}
\end{algorithm}

Note that when $q=1$ and $\zeta=0$ (i.e., the Hessian is locally Lipschitzian and there is no momentum step), this algorithm reduces to the classical cubic regularized Newton method proposed in \cite{np06}. In the case where $\zeta=0$ (i.e., no momentum steps are considered), this method has also been considered in \cite{gne} with a comprehensive analysis of its complexity. The superlinear/quadratic convergence for the cubic Newton method was first established in \cite{np06} under the {\em nondegeneracy} condition, and then was relaxed to the {\em uniform metric subregularity/error bound} condition on $\nabla f$ in \cite{yzs}. We also note that, if $L_2$ can be obtained easily, then the parameter $\sigma_k$ in Algorithm 1 can be chosen as $L_2$. Otherwise, the parameter $\sigma_k$ therein can be obtained by using line search strategies outlined in \cite{np06}.  Incorporating momentum steps is a well-adopted technique in first-order optimization to accelerate numerical performances. Therefore, it is conceivable to also consider incorporating the {\em momentum technique} into {\em high-order regularization methods}.

To illustrate this, we see from the following plots (Figures~1 and 2) that incorporating the momentum steps into the {\em cubic Newton methods} in solving phase retrieval and matrix completion problems can lead to faster convergence with comparable quality of numerical solutions. Here we follow the setup as in \cite{yzs} for both problems with dimension $128$, and use their code (available at https://github.com/ZiruiZhou/cubicreg app.git) for implementing the cubic regularized Newton method. For the cubic regularized Newton method with momentum steps, we choose $\xi=0.1$ and set $q=1$ in Algorithm~1 and follow the same approach as in \cite{yzs} to solve the subproblems \eqref{116}. Then we use the same notion of relative error as in \cite{yzs} and plot the relative error (in the log scale) in terms of the number of iterations. \footnote{While it is clear from the presented numerical illustrations that incorporating momentum steps can have numerical advantages, further extensive numerical experiments are definitely needed. This goes beyond the scope of this paper and will be carried out in a separate study.}\vspace*{0.05in}
\begin{figure}[htpb!]
\centering
\begin{minipage}{.53\textwidth}
\centering
\includegraphics[scale=0.17]{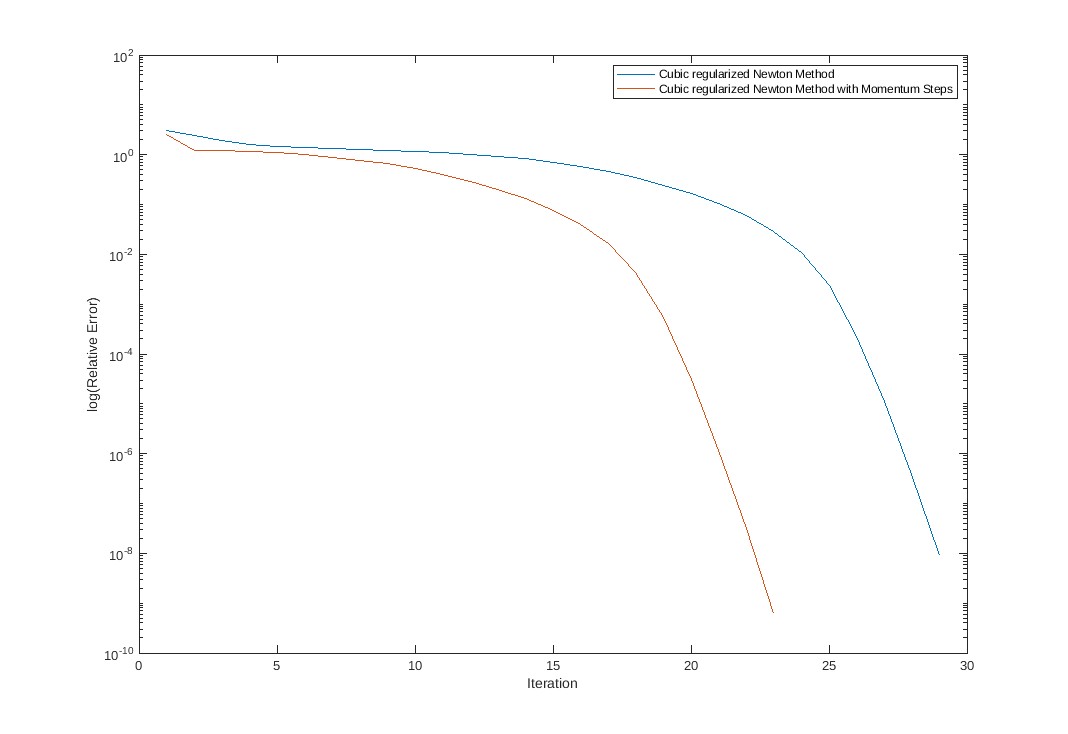}
\captionof{figure}{Phase retrieval problem}
\label{fig:test1}
\end{minipage}%
\begin{minipage}{.53\textwidth}
\centering
\includegraphics[scale=0.3]{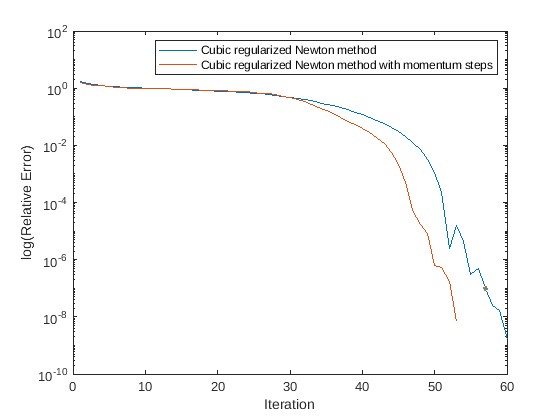}
\captionof{figure}{Matrix completion problem}
\label{fig:test2}
\end{minipage}
\end{figure}

Let us emphasize that, to the best of our knowledge, {\em superlinear/quadratic convergence} for high-order regularized methods with momentum steps {\em has not been considered} in the literature. We also mention that, even when it is specialized to the celebrated cubic regularized Newton method without momentum steps, our assumptions for quadratic convergence are strictly weaker than the ones imposed in \cite{yzs}.

Note that in the regularization step, Algorithm~1 requires computing $y_{\sigma_k}(x_k)$, which is a global minimizer of $f_{\sigma_k}(\cdot;x_k)$. This can be done by solving either an equivalent convex optimization problem, or an eigenvalue problem; see, e.g, \cite{gne,np06,yzs}. The necessary and sufficient optimality conditions for global minimizers of the regularized quadratic approximation \eqref{A88} presented in the next lemma are taken from \cite{hsy}.

\begin{lemma}[\bf optimality conditions for regularized quadratic approximations]\label{L5.2} Given $x\in \mathcal{F}$, we have that $y_{\sigma}(x) $ is a global minimizer of \eqref{A88} if and only if the following conditions hold:
\begin{equation}\label{4.5}
\nabla f(x)+ \nabla^2 f(x)(y_{\sigma}(x)-x)+\frac{\sigma}{q+1}\|y_{\sigma}(x)-x\|^q(y_{\sigma}(x)-x)=0,
\end{equation}
\begin{equation}\label{4.6}
\nabla^2 f(x)+\frac{\sigma}{q+1}\|y_{\sigma}(x)-x\|^qI_m\succeq 0.
\end{equation}
\end{lemma}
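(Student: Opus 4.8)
The plan is to pass to the increment variable $h:=y-x$ and set $g:=\nabla f(x)$, $H:=\nabla^2 f(x)$, so that minimizing $f_\sigma(\cdot\,;x)$ over $\mathbb{R}^m$ is the same as minimizing
\[
m(h):=\langle g,h\rangle+\tfrac12\langle Hh,h\rangle+\tfrac{\sigma}{(q+1)(q+2)}\|h\|^{q+2}
\]
over $h\in\mathbb{R}^m$. Since $q\in(0,1]$, the function $h\mapsto\|h\|^{q+2}$ is ${\cal C}^2$ on all of $\mathbb{R}^m$ with $\nabla\|h\|^{q+2}=(q+2)\|h\|^q h$, so $m$ is ${\cal C}^2$, and every global (hence local) minimizer $h^\ast:=y_\sigma(x)-x$ satisfies the stationarity equation $\nabla m(h^\ast)=g+Hh^\ast+\tfrac{\sigma}{q+1}\|h^\ast\|^q h^\ast=0$, which is exactly \eqref{4.5}. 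Writing $r^\ast:=\|h^\ast\|$, $\lambda:=\tfrac{\sigma}{q+1}(r^\ast)^q$ and $\widehat H:=H+\lambda I_m$, condition \eqref{4.5} reads $\widehat H h^\ast=-g$.

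The key algebraic step, valid whenever \eqref{4.5} holds, is to substitute $g=-\widehat H h^\ast$ into $m(h)-m(h^\ast)$ and complete the square, which gives the identity
\[
m(h)-m(h^\ast)=\tfrac12\big\langle\widehat H(h-h^\ast),\,h-h^\ast\big\rangle+\big(\omega(\|h\|)-\omega(r^\ast)\big),\qquad \omega(r):=\tfrac{\sigma}{(q+1)(q+2)}r^{q+2}-\tfrac{\lambda}{2}r^2 .
\]
A one-line computation yields $\omega'(r)=\tfrac{\sigma}{q+1}r\big(r^{q}-(r^\ast)^{q}\big)$, so $\omega$ strictly decreases on $(0,r^\ast)$ and strictly increases on $(r^\ast,\infty)$; hence $\omega(\|h\|)\ge\omega(r^\ast)$ for every $h$. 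This settles \emph{sufficiency} at once: if moreover \eqref{4.6} holds, i.e. $\widehat H\succeq 0$, then both summands on the right-hand side are nonnegative, so $m(h)\ge m(h^\ast)$ for all $h$ and $h^\ast$ is a global minimizer.

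For \emph{necessity} it remains to derive \eqref{4.6} from global optimality of $h^\ast$. If $h^\ast=0$ this is just the second-order necessary condition $\nabla^2 m(0)=H\succeq 0$. If $h^\ast\neq0$, taking $h=-h^\ast$ in the identity gives $\langle\widehat H h^\ast,h^\ast\rangle\ge0$, and taking $h=h^\ast+tw$ with $w\perp h^\ast$ and $t\to0$ (so that $\|h\|-r^\ast=O(t^2)$ and the $\omega$-contribution is $O(t^4)$) gives $\langle\widehat H w,w\rangle\ge0$ for all $w\perp h^\ast$. To upgrade this to $\widehat H\succeq0$ one cannot rely on local perturbations alone — they only force $\widehat H$ plus a rank-one correction along $h^\ast$ to be positive semidefinite — so one reduces to a trust-region–type subproblem: for $r\ge0$ set $p(r):=\min_{\|h\|=r}\big(\langle g,h\rangle+\tfrac12\langle\widehat H h,h\rangle\big)$; then the identity shows that $r^\ast$ minimizes $r\mapsto p(r)+\omega(r)$ on $[0,\infty)$ and that $h^\ast$ is a global minimizer of $h\mapsto\langle g,h\rangle+\tfrac12\langle\widehat H h,h\rangle$ on the sphere $\|h\|=r^\ast$. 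The classical characterization of global minimizers of a quadratic over a Euclidean ball/sphere (the ``hard-case'' analysis; cf.\ \cite{np06} for the cubic instance and \cite{hsy} in general) then forces the optimal multiplier to equal $\lambda$ and $\widehat H=H+\lambda I_m\succeq0$, which is \eqref{4.6}.

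I expect this last paragraph — the necessity of the curvature condition \eqref{4.6} — to be the only genuine obstacle: it is precisely where global (rather than merely local) optimality of $h^\ast$ must be exploited, and where one has to invoke, or reprove, the trust-region hard-case characterization; everything else reduces to the completion-of-square identity and the elementary monotonicity of $\omega$. Structurally the argument is a verbatim adaptation of the cubic-regularization proof, the only property of the regularizer that is actually used being that $r\mapsto\tfrac{\sigma}{(q+1)(q+2)}r^{q+2}$ is strictly convex and increasing with effective shift $r\mapsto\tfrac{\sigma}{q+1}r^{q}$ increasing in $r$.
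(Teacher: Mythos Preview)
The paper does not prove this lemma at all; it simply records it as ``taken from \cite{hsy}.'' Your proposal is therefore strictly more detailed than what the paper offers, and it is essentially correct: the change of variables, the completion-of-square identity
\[
m(h)-m(h^\ast)=\tfrac12\langle\widehat H(h-h^\ast),h-h^\ast\rangle+\omega(\|h\|)-\omega(r^\ast),
\]
and the monotonicity analysis of $\omega$ are all valid, and together they dispose of sufficiency cleanly. For necessity you correctly identify that the local perturbations $h=-h^\ast$ and $h=h^\ast+tw$ with $w\perp h^\ast$ do not by themselves force $\widehat H\succeq 0$, and your reduction to the sphere-constrained quadratic (with the multiplier pinned to $\mu=0$ by the stationarity equation) followed by an appeal to \cite{hsy,np06} is legitimate.

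One small remark that would let you finish necessity in a self-contained way, without invoking the trust-region hard case: restrict your identity to the sphere $\|h\|=r^\ast$, where the $\omega$-term vanishes exactly, so global optimality gives $\langle\widehat H(h-h^\ast),h-h^\ast\rangle\ge 0$ for \emph{every} $h$ on that sphere. If $\widehat H$ had a unit eigenvector $u$ with negative eigenvalue and $\langle h^\ast,u\rangle\ne 0$, the reflection $h=h^\ast-2\langle h^\ast,u\rangle u$ lies on the sphere and yields $h-h^\ast$ parallel to $u$, a contradiction; if $\langle h^\ast,u\rangle=0$, your own orthogonal-perturbation argument already gives the contradiction. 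This replaces the external citation by two lines of elementary linear algebra.
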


For the reader's convenience, we also formulate straightforward consequences of our major requirements in Assumption~\ref{A3.1} that can be found in \cite[(2.7) and (2.8)]{gne}.

\begin{lemma}[\bf consequences of major assumptions]\label{L3.2}
Suppose that all the requirements in Assumption~{\rm\ref{A3.1}} are satisfied. Then for any $x, y\in \mathcal{F}$ we get the estimates
\begin{equation}\label{4.27}
\|\nabla f(y)-\nabla f(x)-\nabla^2f(x)(y-x)\|\leq \frac{L_2}{q+1}\|y-x\|^{q+1},
\end{equation}
\begin{equation}\label{4.33}
\left|f(y)-f(x)-\langle \nabla f(x), y-x\rangle-\frac{1}{2}\langle \nabla^2 f(x)(y-x), y-x\rangle\right|\leq \frac{L_2}{(q+1)(q+2)}\|y-x\|^{q+2}.
\end{equation}
\end{lemma}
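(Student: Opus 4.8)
The plan is to obtain both estimates from Taylor's theorem in integral form, using the convexity of $\mathcal{F}$ (so that the whole segment $[x,y]$ stays in $\mathcal{F}$, where $f$ is $\mathcal{C}^2$ and $\nabla^2 f$ is H\"older continuous by Assumption~\ref{A3.1}(3)).

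First I would prove \eqref{4.27}. Since $\mathcal{F}$ is convex and $x,y\in\mathcal{F}$, the point $x+t(y-x)$ lies in $\mathcal{F}$ for every $t\in[0,1]$, so $\mathcal{C}^2$-smoothness gives $\nabla f(y)-\nabla f(x)=\int_0^1\nabla^2 f\big(x+t(y-x)\big)(y-x)\,dt$. Subtracting $\nabla^2 f(x)(y-x)=\int_0^1\nabla^2 f(x)(y-x)\,dt$ and estimating under the integral sign yields $\|\nabla f(y)-\nabla f(x)-\nabla^2 f(x)(y-x)\|\le\int_0^1\|\nabla^2 f(x+t(y-x))-\nabla^2 f(x)\|\,\|y-x\|\,dt$. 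Inserting the H\"older bound $\|\nabla^2 f(x+t(y-x))-\nabla^2 f(x)\|\le L_2\,t^q\|y-x\|^q$ and using the elementary identity $\int_0^1 t^q\,dt=\tfrac{1}{q+1}$ produces \eqref{4.27}.

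Next I would establish \eqref{4.33} from the second-order Taylor expansion with integral remainder: for $x,y\in\mathcal{F}$, convexity of $\mathcal{F}$ and $\mathcal{C}^2$-smoothness give $f(y)=f(x)+\langle\nabla f(x),y-x\rangle+\int_0^1(1-t)\langle\nabla^2 f(x+t(y-x))(y-x),y-x\rangle\,dt$. Since $\int_0^1(1-t)\,dt=\tfrac12$, the quantity inside the absolute value in \eqref{4.33} equals $\int_0^1(1-t)\big\langle\big[\nabla^2 f(x+t(y-x))-\nabla^2 f(x)\big](y-x),\,y-x\big\rangle\,dt$. Bounding the integrand by $(1-t)\,L_2\,t^q\|y-x\|^{q+2}$ via the H\"older estimate and the Cauchy--Schwarz inequality, and using $\int_0^1(1-t)t^q\,dt=\tfrac1{q+1}-\tfrac1{q+2}=\tfrac1{(q+1)(q+2)}$, yields \eqref{4.33}. (Alternatively, \eqref{4.33} can be reduced to \eqref{4.27} by writing $f(y)-f(x)-\langle\nabla f(x),y-x\rangle=\int_0^1\langle\nabla f(x+t(y-x))-\nabla f(x),y-x\rangle\,dt$, subtracting $\tfrac12\langle\nabla^2 f(x)(y-x),y-x\rangle=\int_0^1 t\langle\nabla^2 f(x)(y-x),y-x\rangle\,dt$, and applying \eqref{4.27} to the pair $\big(x,\,x+t(y-x)\big)$.)

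The argument is entirely routine and the statement is quoted from \cite{gne}; there is no genuine obstacle. The only points needing (mild) care are the justification that the segment $[x,y]$ lies in $\mathcal{F}$, so that the Taylor integrals and the H\"older estimate are legitimately applicable along it, and the bookkeeping of the elementary $t$-integrals that produce the constants $\tfrac{1}{q+1}$ and $\tfrac{1}{(q+1)(q+2)}$. This is exactly the higher-order analogue of the classical descent lemma.
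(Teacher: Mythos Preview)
Your argument is correct and is exactly the standard integral-Taylor proof of these estimates; the paper itself does not give a proof but simply cites \cite[(2.7) and (2.8)]{gne}, where precisely this derivation appears. There is nothing to add.
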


The next technical lemma is needed for the convergence justification below. 

\begin{lemma}[\bf relationships for H\"older exponents]\label{L4.4}
Given $ b\in (0, \infty)$ and  $q\in (0,1]$, the following hold. 

{\bf(i)} For each $\alpha>0$, there is a unique solution $t(\alpha) > 2$ to the equation
\begin{equation*}
\alpha t^{q+1}-2\alpha t^q-b(q+1)t-b=0.
\end{equation*}
Moreover, the function $\alpha \mapsto t(\alpha)$  is decreasing on $(0,\infty)$. 

{\bf(ii)} Letting $a\in (0,\infty)$ and $\ell:=t(a) >0$, suppose that
\begin{equation}\label{L98}
a x^{q+1}\leq by^{q+1}+2a x^qy+ b(q+1)x y^q\;\mbox{ for }\;x, y\in \mathbb{R}_+.
\end{equation}
Then for such numbers $x$ and $y$, we have the inequality $x\leq\ell y$.
\end{lemma}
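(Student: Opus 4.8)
\textbf{Proof proposal for Lemma~\ref{L4.4}.}

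The plan is to treat the two parts separately but with a shared algebraic device: studying the single-variable polynomial $P_\alpha(t):=\alpha t^{q+1}-2\alpha t^q-b(q+1)t-b$ on $(2,\infty)$. For part (i), I would first observe that $P_\alpha(2)=-2b(q+1)-b<0$, so the root (if it exists) lies strictly above $2$, and that $P_\alpha(t)\to+\infty$ as $t\to\infty$ since the leading term $\alpha t^{q+1}$ dominates (here $q+1>q$ and $q+1>1$). Continuity then gives at least one root $t(\alpha)>2$ by the intermediate value theorem. For \emph{uniqueness}, I would show $P_\alpha$ is eventually increasing past the first root; the cleanest way is to factor out $t^q$ and write $P_\alpha(t)=t^q\big(\alpha t-2\alpha-b(q+1)t^{1-q}\big)-b$, then argue that the bracketed function $t\mapsto \alpha t-2\alpha-b(q+1)t^{1-q}$ is strictly increasing on $(2,\infty)$ because its derivative $\alpha-b(q+1)(1-q)t^{-q}$ is increasing in $t$ and positive for $t$ large — but since I need it on all of $(2,\infty)$, a more robust route is to show directly that $t\mapsto P_\alpha(t)/t^q = \alpha t-2\alpha -b(q+1)t^{1-q}-bt^{-q}$ is \emph{strictly increasing} on $(0,\infty)$: its derivative is $\alpha-b(q+1)(1-q)t^{-q}+bq\,t^{-q-1}$, and while the middle term is problematic near $0$, on the relevant range $t>2$ one checks it is positive, or alternatively one simply notes that $P_\alpha(t)/t^q$ is the sum of the strictly increasing function $\alpha t$ and the nondecreasing functions $-b(q+1)t^{1-q}$ is actually decreasing — so I will instead argue sign-change uniqueness via the observation that $P_\alpha(t)=0 \iff \alpha t^q(t-2)=b(q+1)t+b$, where the left side $t\mapsto \alpha t^q(t-2)$ is strictly increasing on $(2,\infty)$ (product of two positive increasing factors on that interval) and the right side $t\mapsto b(q+1)t+b$ is strictly increasing but with strictly smaller growth rate, so the two curves cross exactly once on $(2,\infty)$. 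This rewriting is the key trick and makes uniqueness transparent.

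For the \emph{monotonicity} of $\alpha\mapsto t(\alpha)$, I would use the same rewriting: at $t=t(\alpha)$ we have $\alpha = \dfrac{b(q+1)t(\alpha)+b}{t(\alpha)^q(t(\alpha)-2)}$. The right-hand side, viewed as a function $h(t):=\dfrac{b(q+1)t+b}{t^q(t-2)}$ on $(2,\infty)$, is strictly \emph{decreasing} (numerator grows linearly, denominator grows like $t^{q+1}$, and one can confirm $h'(t)<0$ by a routine quotient-rule sign check, or just by noting numerator/denominator is a ratio of an increasing function to a faster-increasing one after clearing). Since $h$ is a strictly decreasing bijection from $(2,\infty)$ onto $(0,\infty)$ and $\alpha = h(t(\alpha))$, the inverse relation $t(\alpha)=h^{-1}(\alpha)$ is also strictly decreasing, which is exactly the claim.

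For part (ii), I would argue by contradiction. Fix $x,y\in\mathbb R_+$ satisfying \eqref{L98}; if $y=0$ the inequality forces $ax^{q+1}\le 2ax^q\cdot 0 = 0$, hence $x=0$ and $x\le \ell y$ trivially, so assume $y>0$ and set $s:=x/y\ge 0$. Dividing \eqref{L98} by $y^{q+1}>0$ gives $a s^{q+1}\le b+2a s^q+b(q+1)s$, i.e. $P_a(s)=a s^{q+1}-2a s^q-b(q+1)s-b\le 0$. We must show $s\le \ell=t(a)$. Suppose instead $s>\ell$. Since $\ell>2$, we have $s>2$, and on $(2,\infty)$ we established above that $t\mapsto P_a(t)$ is strictly increasing (equivalently, $a t^q(t-2)-b(q+1)t-b$ changes sign exactly once, from negative to positive, at $t=\ell$); therefore $P_a(s)>P_a(\ell)=0$, contradicting $P_a(s)\le 0$. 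Hence $s\le\ell$, i.e. $x\le\ell y$, as desired.

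The main obstacle I anticipate is the monotonicity/uniqueness bookkeeping: $P_\alpha$ is \emph{not} globally monotone on $(0,\infty)$ (indeed $P_\alpha(0)=-b<0$, $P_\alpha$ dips further negative, then rises), so one must be careful to confine all increasing-function arguments to the interval $(2,\infty)$ where the factorization $P_\alpha(t)=a t^q(t-2)-(b(q+1)t+b)$ exhibits a clean ``increasing minus slower-increasing'' structure. Getting the quotient-rule sign of $h'(t)<0$ (for the $\alpha\mapsto t(\alpha)$ decreasing claim) is a short but slightly fiddly computation — it amounts to checking $b(q+1)\,t^q(t-2) < \big(b(q+1)t+b\big)\cdot\frac{d}{dt}\big(t^q(t-2)\big)$, where $\frac{d}{dt}(t^q(t-2))=(q+1)t^q-2qt^{q-1}$; this holds on $(2,\infty)$ and is the one place where I would actually carry out the derivative computation rather than wave at it.
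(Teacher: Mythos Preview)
Your overall architecture is close to the paper's --- both hinge on the inverse function $h(t)=\dfrac{b(q+1)t+b}{t^{q}(t-2)}$ on $(2,\infty)$ and its strict monotonicity. But there is a genuine gap in your argument that you should fix.

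\medskip
\textbf{The error.} In part~(ii) you assert that $t\mapsto P_a(t)$ is \emph{strictly increasing} on $(2,\infty)$, and you treat this as equivalent to ``$P_a$ changes sign exactly once.'' Neither claim is justified as written, and the first one is false. Compute
\[
P_a'(t)=a(q+1)t^{q}-2aq\,t^{q-1}-b(q+1),
\qquad P_a'(2)=a\,2^{q}-b(q+1),
\]
so for any $a<b(q+1)2^{-q}$ we have $P_a'(2)<0$ and $P_a$ is \emph{decreasing} near $t=2$. Your contradiction step ``$s>\ell\Rightarrow P_a(s)>P_a(\ell)=0$'' therefore does not follow from strict monotonicity. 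The same issue infects your uniqueness argument in part~(i): ``left side increasing, right side increasing with smaller growth rate'' is not a valid proof that two increasing functions cross exactly once.

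\medskip
\textbf{How to repair it.} You already have the right tool. Once you prove that $h$ is a strictly decreasing bijection $(2,\infty)\to(0,\infty)$, you immediately get: (a) for each $\alpha>0$ there is exactly one $t\in(2,\infty)$ with $P_\alpha(t)=0$ (uniqueness), and (b) for $t>2$, $P_a(t)\le 0\iff a\le h(t)\iff t\le h^{-1}(a)=\ell$. Coupled with the trivial observation that $P_a(t)<0$ for $t\in(0,2]$ (since $a\,t^{q}(t-2)\le 0$ there), this gives $P_a(s)\le 0\Rightarrow s\le\ell$ directly, with no need for monotonicity of $P_a$. The paper takes a slightly different route to the same implication: it shows $P_\alpha$ is \emph{convex} on $(0,\infty)$ (via $\partial^2 P_\alpha/\partial t^2>0$), hence unimodal (decreasing then increasing), which yields $P_\alpha(t)\le 0\Rightarrow t\le t(\alpha)$ globally in one stroke. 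Either approach works; just drop the incorrect ``strictly increasing on $(2,\infty)$'' claim and route the argument through $h$ (or through convexity).
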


\begin{proof} To verify (i), we define the function
$f(\alpha, t):=\alpha t^{q+1}-2\alpha t^q-b(q+1)t-b$ for all $(\alpha, t)\in (0,\infty)\times \mathbb{R}_+$ and denote $t(\alpha)=\{t\;|\;f(\alpha, t)=0\}$ with $\alpha\in (0,\infty)$, which occurs to be single-valued as shown below. Direct calculations give us the expressions
\begin{equation*}
\frac{\partial f}{\partial t} (\alpha, t)=\alpha(q+1)t^q-2\alpha qt^{q-1}-b(q+1),
\end{equation*}
\begin{equation*}
\frac{\partial^2 f}{\partial t^2}(\alpha, t)=\alpha q(q+1)t^{q-1}-2\alpha q(q-1)t^{q-2}=\alpha qt^{q-2}\big((q+1)t-2(q-1)\big)> 0,\quad t\in (0,\infty),
\end{equation*}
and hence $ \frac{\partial f}{\partial t}(\alpha, \cdot)$ is increasing on $\mathbb{R}_+$. Combining this with the facts that $\lim_{t \to 0^+} \frac{\partial f}{\partial t} (\alpha, t)<0$ and $\lim_{t \to \infty}  \frac{\partial f}{\partial t} (\alpha, t)=\infty$  for any $\alpha\in (0,\infty)$ guarantees the existence of $\tilde t(\alpha)>0$ such that $f(\alpha, \cdot)$ is decreasing on $(0, \tilde t(\alpha)]$ and increasing on $(\tilde t(\alpha), \infty)$. Observing that $f(\alpha, 0)=-b<0$ and $\lim_{t\to \infty} f(\alpha, t)=\infty$  for any $\alpha\in (0,\infty)$, we see that the mapping $\alpha \mapsto t(\alpha)$ with $\alpha\in (0,\infty)$  is single-valued and satisfies the implication
\begin{equation}\label{L02}
f(\alpha, t)\leq 0\Longrightarrow t\leq t(\alpha).
\end{equation}
 Defining further the univariate function
\begin{equation*}
\alpha(t):=\frac{b+b(q+1)t}{t^{q+1}-2t^q}\quad\mbox{for all }\;t\in (2, \infty),
\end{equation*}
we claim that $t\mapsto\alpha(t)$ is decreasing on $(2,\infty)$ with the range $(0,\infty)$. This clearly ensures that its inverse function $\alpha \mapsto t(\alpha)$ is decreasing on $(0,\infty)$ with a range of $(2,\infty)$, and so (i) follows.

Thus to prove (i), it remains to verify the formulated claim. Direct calculations show that 
\begin{equation}\label{L201}
\begin{aligned}
\alpha'(t)=&\frac{b(q+1)(t^{q+1}-2t^q)-(b+b(q+1)t)\Big((q+1)t^q-2qt^{q-1}\Big)}{(t^{q+1}-2t^q)^2}\\
=&\frac{bt^{q-1}}{(t^{q+1}-2t^q)^2}\Big[(q+1)(t^{2}-2t)-(1+(q+1)t)\Big((q+1)t-2q\Big)\Big]\\
=&\frac{bt^{q-1}}{(t^{q+1}-2t^q)^2}\Big[-q(q+1)t^2+(2q^2-q-3)t+2q\Big].
\end{aligned}
\end{equation}
For all $t\in\mathbb R$, we consider the function
$h(t):=-q(q+1)t^2+(2q^2-q-3)t+2q$ and observe that
$\lim_{t\to -\infty}h(t)=-\infty$ and $h(0)=2q>0$ and $h(2)=-4q-6<0$, which allows us to deduce from \eqref{L201} that $\alpha(t)$ is decreasing on $(2,\infty)$. Combining this with the facts that $\lim_{t\to 2}\alpha(t)=\infty$ and $\lim_{t\to \infty}\alpha(t)=0$ justifies the above claim and hence the entire assertion (i).

To verify now assertion (ii), we deduce from \eqref{L98} that $a x^{q+1}- by^{q+1}-2a x^q y- b(q+1)x y^q\leq 0$.
Letting $x:=ty$ for some $t\in \mathbb{R}_+$ and substituting it into the above inequality tells us that
\begin{equation}\label{L26}
\left(a t^{q+1}-2a t^q- b(q+1)t- b \right)y^{q+1}\leq 0.
\end{equation}
If $y=0$, then \eqref{L98} implies that $x=0$, and so (ii) trivially holds. To justify the inequality $x\le\ell y$, it suffices to consider the case where $y>0$.  Dividing both sides of \eqref{L26} by $y^{q+1}$, we get
\begin{equation*}
f(a, t)=a t^{q+1}-2a t^q- b(q+1)t- b\leq 0,
\end{equation*}
and therefore the claimed inequality follows from \eqref{L02} and the fact that $t(\alpha) > 2$. The proof is complete.
\end{proof}

The following lemma is an important step to justify the desired performance of Algorithm~1. 

\begin{lemma}[\bf estimates for regularized quadratic approximations]\label{L3.3} Under Assumption {\rm\ref{A3.1}}, let $x\in \mathcal{F}$, and let $\widecheck x$ be a projection point of $x$ to $\Theta$. If $\widecheck x\in \mathcal{F}$, then for any $\sigma>0$ we have the estimate
\begin{equation}\label{A98}
 \|y_{\sigma}(x)-x\|\leq \ell_\sigma \,  d(x, \Theta),
\end{equation}
where $y_{\sigma}(x)$ is a minimizer of the regularized quadratic approximation from \eqref{A88} and \eqref{A89} with $\ell_\sigma:=t^*$ defined as the unique positive solution $t^*$ of the equation $H_{\sigma}(t)=\sigma t^{q+1}-2\sigma t^q-L_2(q+1)t-L_2=0$ with $L_2$ taken from part \rm(3) of Assumption~{\rm\ref{A3.1}}.
\end{lemma}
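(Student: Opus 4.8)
The plan is to reduce the desired bound \eqref{A98} to the scalar inequality
\begin{equation*}
\sigma\, r^{q+1}\le L_2\, d^{q+1}+2\sigma\, r^{q} d+L_2(q+1)\, r\, d^{q},\tag{$\star$}
\end{equation*}
where $r:=\|y_\sigma(x)-x\|$ and $d:=d(x,\Theta)=\|x-\widecheck x\|$, and then to apply Lemma~\ref{L4.4}(ii) with $a=\sigma$, $b=L_2$, and the pair $(r,d)$ in the roles of $(x,y)$: this gives $r\le t(\sigma)\, d$, and since $t(\sigma)$ is exactly the unique positive root $\ell_\sigma$ of $H_\sigma$ furnished by Lemma~\ref{L4.4}(i) (with $\alpha=\sigma$, $b=L_2$), the estimate \eqref{A98} follows. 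So the entire task is to prove $(\star)$. A pleasant feature of this route is that it never uses $y_\sigma(x)\in\mathcal F$; only $x,\widecheck x\in\mathcal F$ is needed.

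First I would record three facts about the projection point $\widecheck x$. Since $\widecheck x\in\Theta$, we have $\nabla f(\widecheck x)=0$ and $\nabla^2 f(\widecheck x)\succeq0$. Since $x,\widecheck x\in\mathcal F$, H\"older continuity of the Hessian (part~(3) of Assumption~\ref{A3.1}) gives $\|\nabla^2 f(x)-\nabla^2 f(\widecheck x)\|\le L_2 d^{q}$, hence $\langle\nabla^2 f(x)v,v\rangle\ge-L_2 d^{q}\|v\|^2$ for every $v$; and \eqref{4.27} together with $\nabla f(\widecheck x)=0$ gives $\|\theta\|\le\tfrac{L_2}{q+1}d^{q+1}$ for $\theta:=\nabla f(x)+\nabla^2 f(x)(\widecheck x-x)$. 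The crucial step is to take the first-order optimality condition \eqref{4.5} for $y_\sigma(x)$ (with $r^{q}$ written for $\|y_\sigma(x)-x\|^{q}$) and pair it with the vector $y_\sigma(x)-\widecheck x$. Writing $y_\sigma(x)-x=(y_\sigma(x)-\widecheck x)-(x-\widecheck x)$ and regrouping so that $\theta$ appears, then using $\langle\nabla^2 f(x)(y_\sigma(x)-\widecheck x),y_\sigma(x)-\widecheck x\rangle\ge-L_2 d^{q}\|y_\sigma(x)-\widecheck x\|^2$ and Cauchy--Schwarz on the remaining terms, one arrives --- after dividing through by $s:=\|y_\sigma(x)-\widecheck x\|$ when $s>0$ --- at
\begin{equation*}
\Bigl(\tfrac{\sigma}{q+1}r^{q}-L_2 d^{q}\Bigr)\, s\le\tfrac{L_2}{q+1}d^{q+1}+\tfrac{\sigma}{q+1}r^{q}d.\tag{$\ast\ast$}
\end{equation*}

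It then remains to deduce $(\star)$ from $(\ast\ast)$ by a short case split. If $r=0$ there is nothing to prove. If $\sigma r^{q}\le L_2(q+1)d^{q}$, then $\sigma r^{q+1}=\sigma r^{q}\cdot r\le L_2(q+1)\, r\, d^{q}$, so $(\star)$ holds. If $r\le d$, then $\sigma r^{q+1}\le\sigma r^{q}d\le 2\sigma r^{q}d$, so $(\star)$ holds. In the remaining case $r>d$ and $\sigma r^{q}>L_2(q+1)d^{q}$: here $s>0$ (else $y_\sigma(x)=\widecheck x$ and $r=d$), the coefficient in $(\ast\ast)$ is strictly positive, and $s\ge r-d>0$ by the triangle inequality, so replacing $s$ by $r-d$ in $(\ast\ast)$, expanding, and rearranging yields $\sigma r^{q+1}\le-qL_2 d^{q+1}+2\sigma r^{q}d+L_2(q+1)\, r\, d^{q}$, which is dominated by the right-hand side of $(\star)$ since $-q\le1$. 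Lemma~\ref{L4.4}(ii) then completes the proof.

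The step I expect to be the main obstacle is the derivation of $(\ast\ast)$ in a form that does not collapse to a tautology. Pairing \eqref{4.5} with $y_\sigma(x)-x$ itself, or bounding $\|\nabla f(x)+\nabla^2 f(x)(y_\sigma(x)-x)\|$ directly using $\nabla f(\widecheck x)=0$, merely reproduces the identity $\|\nabla f(x)+\nabla^2 f(x)(y_\sigma(x)-x)\|=\tfrac{\sigma}{q+1}r^{q+1}$ coming from \eqref{4.5} --- the regularization term cancels and no usable inequality survives; and the function-value comparison $f_{\sigma}(y_\sigma(x);x)\le f_{\sigma}(\widecheck x;x)$ would require both that $\widecheck x$ be a local minimizer and that $\sigma>L_2$, neither of which is available. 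It is the pairing with $y_\sigma(x)-\widecheck x$, combined with the use of $\nabla^2 f(\widecheck x)\succeq0$ (interestingly, the second-order optimality condition \eqref{4.6} is not needed), that breaks the circularity.
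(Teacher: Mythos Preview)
Your proposal is correct and follows essentially the same strategy as the paper: both reduce the problem to the scalar inequality $(\star)$ and invoke Lemma~\ref{L4.4}(ii) with $a=\sigma$, $b=L_2$.

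There is one technical difference worth noting. The paper does not pair \eqref{4.5} with $y_\sigma(x)-\widecheck x$ to produce a quadratic form; instead it rewrites the optimality condition so as to isolate the vector $\bigl(\nabla^2 f(\widecheck x)+\tfrac{\sigma}{q+1}r^q I\bigr)(y_\sigma(x)-\widecheck x)$, and then uses $\nabla^2 f(\widecheck x)\succeq 0$ to bound its \emph{norm} from below by $\tfrac{\sigma}{q+1}r^q s$. This yields directly the inequality $\tfrac{\sigma}{q+1}r^q s\le \tfrac{L_2}{q+1}d^{q+1}+\tfrac{\sigma}{q+1}r^q d+L_2\, r\, d^{q}$, whose left-hand coefficient is automatically nonnegative, so one substitution $s\ge r-d$ gives $(\star)$ with no case split. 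Your inner-product route produces the weaker intermediate inequality $(\ast\ast)$ with the possibly negative coefficient $\tfrac{\sigma}{q+1}r^q-L_2 d^q$, which is why you need the (correct) case analysis. Both arguments use only $\nabla^2 f(\widecheck x)\succeq 0$ and not \eqref{4.6}, so your closing remark applies equally to the paper's proof.
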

\begin{proof}
Denote $z:=y_{\sigma}(x)$ and deduce from condition \eqref{A89} and 
Lemma~\ref{L5.2} that
\begin{equation}\label{A92}
0=\nabla f(x)+\nabla^2f(x)(z-x)+\frac{\sigma}{q+1} \|z-x\|^{q}(z-x).
\end{equation}
By $\widecheck x\in \Theta$, we have $\nabla f(\widecheck x)=0$ and $\nabla^2f(\widecheck x)\succeq 0$. Then it follows from \eqref{A92} that
\begin{equation*}
\begin{aligned}
&\left(\nabla^2 f(\widecheck x)+\frac{\sigma}{q+1} \|z-x\|^{q}I_m\right)(z-\widecheck x)
=\nabla^2f(\widecheck x)\Big((z-x)-(\widecheck x-x)\Big)+\frac{\sigma}{q+1} \|z-x\|^{q}\Big((z-x)-(\widecheck x-x)\Big)\\
&=\nabla f(\widecheck x)-\nabla f(x)-\nabla^2 f(\widecheck x)(\widecheck x-x)-\frac{\sigma}{q+1} \|z-x\|^{q}(\widecheck x-x)
-(\nabla^2f(x)-\nabla^2f(\widecheck x))(z-x).
\end{aligned}
\end{equation*}
Combining the latter with $\nabla^2f(\widecheck x)\succeq 0$ gives us
\begin{equation*}
\left\|\left(\nabla^2 f(\widecheck x)+\frac{\sigma}{q+1} \|z-x\|^{q}I_m\right)(z-\widecheck x)\right\|\geq \frac{\sigma}{q+1} \|z-x\|^{q}\|z-\widecheck x\|,
\end{equation*}
and then employing $\widecheck x\in \mathcal{F}$, Lemma~\ref{L3.2}, and Assumption~\ref{A3.1}(3), we arrive at
\begin{equation}\label{A96}
\begin{aligned}
\frac{\sigma}{q+1} \|z-x\|^{q}\|z-\widecheck x\|\leq & \|\nabla f(x)-\nabla f(\widecheck x)-\nabla^2 f(\widecheck x)(x-\widecheck x)\|+\frac{\sigma}{q+1} \|z-x\|^{q}\|x-\widecheck x\|\\
&+\|\nabla^2f(x)-\nabla^2f(\widecheck x)\|\cdot \|z-x\|\\
\leq& \frac{L_2}{q+1} \|x-\widecheck x\|^{q+1}+\frac{\sigma}{q+1} \|z-x\|^{q}\|x-\widecheck x\|+L_2\|z-x\| \cdot \|x-\widecheck x\|^q.
\end{aligned}
\end{equation}
Applying further the triangle inequality $\|z-\widecheck x\|\geq \|z-x\|-\|x-\widecheck x\|$ ensures that
\begin{equation*}
\frac{\sigma}{q+1} \|z- x\|^{q+1}-\frac{\sigma}{q+1} \|z-x\|^{q}\|x-\widecheck x\|\leq \frac{\sigma}{q+1} \|z-x\|^{q}\|z-\widecheck x\|,
\end{equation*}
which being substituted into \eqref{A96} yields
\begin{equation}\nonumber
\frac{\sigma}{q+1} \|z- x\|^{q+1}\leq \frac{L_2}{q+1} \|x-\widecheck x\|^{q+1}+\frac{2\sigma}{q+1} \|z-x\|^{q}\|x-\widecheck x\|+L_2\|z-x\| \cdot \|x-\widecheck x\|^q.
\end{equation}
By Lemma~\ref{L4.4}(ii), this allows us to find a positive scalar  $\ell_\sigma$ (depending only on $\sigma$) such that
$\|z-x\|\leq \ell_\sigma \|x-\widecheck x\|$. Recalling that $z=y_{\sigma}(x)$ and $d(x, \Theta)=\|x-\widecheck x\|$ verifies the desired inequality \eqref{A98}.
\end{proof}

The last lemma here plays a significant role in the convergent analysis of our algorithm. Its proof is rather technical and mainly follows the lines of justifying fast convergence of the cubic regularization method in \cite{yzs}  with  additional adaptations to accommodate the momentum steps and the H\"{o}lderian continuity of Hessians. For completeness and the  reader's convenience, we present details in the Appendix (Section~\ref{appe}).

\begin{lemma}[\bf estimates for iterates of the high-order regularized method with momentum]\label{L3.1} Let $\{x_k\}$ be a sequence of iterates in Algorithm~{\rm 1} under Assumption~{\rm\ref{A3.1}}. If the set $\mathcal{L}(f(x_{k_0}))$ is  bounded for some $k_0\in\mathbb N$, then the following assertions hold. 

{\bf(i)} For all $k\in\mathbb N$ with $k\ge k_0$, we have the inequalities
\begin{equation}\label{5.38}
f(x_{k+1})+\frac{(q+2)\overline{\sigma}-2L_2}{2(q+1)(q+2)}\|\widehat x_{k+1}-x_k\|^{q+2}\leq  f(x_k),
\end{equation}
and the limit $v:=\lim_{k\to \infty} f(x_k)$ exists.

{\bf(ii)} For all $k\in\mathbb N$ with $k\ge k_0$, the relationships 
\begin{equation}\label{632}
\begin{aligned}
\|x_{k+1}-x_k\|\leq (1+\gamma)\|\widehat x_{k+1}-x_k\|, \quad \lim_{k\to \infty}\|\widehat x_{k+1}-x_k\|=0,
\end{aligned}
\end{equation}
\begin{equation}\label{0125}
\|\nabla f(x_{k+1})\|\leq \frac{3(1+\gamma L_1)L_2}{q+1}\|\widehat x_{k+1}-x_k\|^{q+1}
\end{equation}
are fulfilled, where the number $\gamma>0$ in \eqref{632} is defined by
\begin{equation}\label{5.54}
\gamma:=\frac{1}{1-\zeta} \left[\frac{2(q+1)(q+2)}{(q+2)\overline{\sigma}-2L_2} (f(x_0)-f^*)\right]^{\frac{1}{q+2}}.
\end{equation}

{\bf(iii)} The set $\Omega$ of accumulation points of $\{x_k\}$ is nonempty. Moreover, for any $\overline{x}\in \Omega$ we have
\begin{equation}\label{5.53}
f(\overline{x})=v, \quad \nabla f(\overline{x})=0,\; \mbox{ and }\; \nabla^2f(\overline{x})\succeq 0.
\end{equation}
\end{lemma}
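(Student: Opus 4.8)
\textbf{Proof proposal for Lemma~\ref{L3.1}.}

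The plan is to establish the three assertions in order, as each feeds into the next. For assertion~(i), I would start from the optimality of $\widehat x_{k+1}=y_{\sigma_k}(x_k)$ in the subproblem \eqref{116}, which yields $f_{\sigma_k}(\widehat x_{k+1};x_k)\le f_{\sigma_k}(x_k;x_k)=f(x_k)$. Expanding $f_{\sigma_k}(\widehat x_{k+1};x_k)$ via \eqref{A88} and combining with the second-order estimate \eqref{4.33} from Lemma~\ref{L3.2} to replace the quadratic model by $f(\widehat x_{k+1})$ up to an $O(\|\widehat x_{k+1}-x_k\|^{q+2})$ error, one obtains $f(\widehat x_{k+1})+\tfrac{(q+2)\sigma_k-2L_2}{2(q+1)(q+2)}\|\widehat x_{k+1}-x_k\|^{q+2}\le f(x_k)$; since $\sigma_k\ge\overline\sigma$ and $x_{k+1}$ is chosen by the monotone step \eqref{5.27} to be no worse than $\widehat x_{k+1}$, inequality \eqref{5.38} follows. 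The coefficient is strictly positive precisely because $\overline\sigma>\tfrac{2L_2}{q+2}$. Telescoping and using boundedness below of $f$ (Assumption~\ref{A3.1}(1)) gives convergence of $\{f(x_k)\}$ and summability of $\|\widehat x_{k+1}-x_k\|^{q+2}$, hence $\lim_k\|\widehat x_{k+1}-x_k\|=0$. Of course one must first check that all iterates stay in $\mathcal F$: the descent property keeps $x_k\in\mathcal L(f(x_0))\subset{\rm int}(\mathcal F)$, and a short argument using the explicit bound on $\|\widehat x_{k+1}-x_k\|$ (from the summable series, controlled by $f(x_0)-f^*$) shows $\widehat x_{k+1},\widetilde x_{k+1}\in\mathcal F$ as well --- this is where the constant $\gamma$ in \eqref{5.54} enters.

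For assertion~(ii), the bound $\|x_{k+1}-x_k\|\le(1+\gamma)\|\widehat x_{k+1}-x_k\|$ comes from analyzing the momentum step: by \eqref{5.026}, $\|\widetilde x_{k+1}-x_k\|\le\|\widehat x_{k+1}-x_k\|+\beta_{k+1}\|\widehat x_{k+1}-\widehat x_k\|$, and since $\beta_{k+1}\le\zeta<1$ while $\|\widehat x_{k+1}-\widehat x_k\|\le\|\widehat x_{k+1}-x_k\|+\|x_k-\widehat x_k\|$ telescopes back through the iterates, one bounds everything by a geometric-type factor; the explicit form of $\gamma$ in \eqref{5.54} is exactly what makes this estimate close, using $\|\widehat x_{j+1}-x_j\|\le\big[\tfrac{2(q+1)(q+2)}{(q+2)\overline\sigma-2L_2}(f(x_0)-f^*)\big]^{1/(q+2)}$ from \eqref{5.38}. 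Then $\lim_k\|\widehat x_{k+1}-x_k\|=0$ is already in hand from (i). For the gradient bound \eqref{0125}, I would use the first-order optimality condition \eqref{A92} to write $\nabla f(\widehat x_{k+1})=\nabla f(\widehat x_{k+1})-\nabla f(x_k)-\nabla^2 f(x_k)(\widehat x_{k+1}-x_k)-\tfrac{\sigma_k}{q+1}\|\widehat x_{k+1}-x_k\|^q(\widehat x_{k+1}-x_k)$, bound the first group by \eqref{4.27} and the second by $\tfrac{2L_2}{q+1}\|\widehat x_{k+1}-x_k\|^{q+1}$ (using $\sigma_k\le 2L_2$), getting $\|\nabla f(\widehat x_{k+1})\|\le\tfrac{3L_2}{q+1}\|\widehat x_{k+1}-x_k\|^{q+1}$; then pass from $\widehat x_{k+1}$ to $x_{k+1}$ (either $x_{k+1}=\widehat x_{k+1}$, or $x_{k+1}=\widetilde x_{k+1}$ and one uses Lipschitzness of $\nabla f$ with modulus $L_1$ over a step of size $\beta_{k+1}\|\widehat x_{k+1}-\widehat x_k\|\le\gamma\|\widehat x_{k+1}-x_k\|$), which produces the factor $(1+\gamma L_1)$.

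Assertion~(iii) is then almost immediate: boundedness of $\mathcal L(f(x_{k_0}))$ together with the descent property \eqref{5.38} confines $\{x_k\}_{k\ge k_0}$ to a compact set, so $\Omega\ne\emptyset$. For $\overline x\in\Omega$, take a subsequence $x_{k_j}\to\overline x$; continuity of $f$ and the existence of $v=\lim_k f(x_k)$ give $f(\overline x)=v$; continuity of $\nabla f$ combined with \eqref{0125} and $\|\widehat x_{k+1}-x_k\|\to 0$ gives $\nabla f(\overline x)=0$; and for the Hessian, I would pass to the limit in the second-order optimality condition \eqref{4.6}, namely $\nabla^2 f(x_k)+\tfrac{\sigma_k}{q+1}\|\widehat x_{k+1}-x_k\|^q I_m\succeq 0$, using $\sigma_k\le 2L_2$ bounded, $\|\widehat x_{k+1}-x_k\|\to 0$, and continuity of $\nabla^2 f$ to conclude $\nabla^2 f(\overline x)\succeq 0$. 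The main obstacle --- and the reason the proof is deferred to the appendix --- is the bookkeeping needed to (a) guarantee the iterates never leave $\mathcal F$ and (b) control the momentum contribution through the backward telescoping of $\|\widehat x_{j+1}-\widehat x_j\|$ terms so that the constant $\gamma$ in \eqref{5.54} genuinely closes the recursion; everything else is a routine assembly of Lemmas~\ref{L5.2}, \ref{L3.2}, and the structure of Algorithm~1.
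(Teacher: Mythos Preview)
There is a genuine gap in your argument for \eqref{5.38}. Starting only from the minimality $f_{\sigma_k}(\widehat x_{k+1};x_k)\le f_{\sigma_k}(x_k;x_k)=f(x_k)$ and replacing the quadratic model by $f(\widehat x_{k+1})$ via \eqref{4.33} gives
\[
f(\widehat x_{k+1})+\frac{\sigma_k-L_2}{(q+1)(q+2)}\|\widehat x_{k+1}-x_k\|^{q+2}\le f(x_k),
\]
\emph{not} the coefficient $\tfrac{(q+2)\sigma_k-2L_2}{2(q+1)(q+2)}$ you claim. Since Algorithm~1 allows $\sigma_k\in[\overline\sigma,2L_2]$ with $\overline\sigma\in\big(\tfrac{2L_2}{q+2},L_2\big]$, the coefficient $\sigma_k-L_2$ can be \emph{strictly negative} (e.g.\ $q=1$, $\overline\sigma=\tfrac{3L_2}{4}$), so your bound fails to give descent at all. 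The paper closes this gap by using, in addition to minimality, the first-order optimality condition \eqref{4.5}: taking the inner product of \eqref{4.5} with $\widehat x_{k+1}-x_k$ and combining with $\langle\nabla f(x_k),x_k-\widehat x_{k+1}\rangle\ge0$ (from the second-order condition \eqref{4.6}) yields the sharper estimate $f(x_k)-\overline f_{\sigma_k}(x_k)\ge\tfrac{q\sigma_k}{2(q+1)(q+2)}\|\widehat x_{k+1}-x_k\|^{q+2}$, and this extra $\tfrac{q\sigma_k}{2(q+1)(q+2)}$ is exactly what upgrades the coefficient to the one in \eqref{5.38}, which is positive precisely because $\overline\sigma>\tfrac{2L_2}{q+2}$.

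A smaller issue occurs in your derivation of \eqref{0125}. Bounding the momentum step via $\beta_{k+1}\le\|\widehat x_{k+1}-x_k\|$ and $\|\widehat x_{k+1}-\widehat x_k\|\le\gamma$ gives $\|\nabla f(x_{k+1})\|\le\tfrac{3L_2}{q+1}\|\widehat x_{k+1}-x_k\|^{q+1}+L_1\gamma\|\widehat x_{k+1}-x_k\|$, and since $q+1>1$ the linear term dominates as $\|\widehat x_{k+1}-x_k\|\to0$; this does \emph{not} produce the form $C\|\widehat x_{k+1}-x_k\|^{q+1}$ required in \eqref{0125}. The paper instead exploits the other clamp $\beta_{k+1}\le\|\nabla f(\widehat x_{k+1})\|$ in \eqref{5.025}, which yields $\|\nabla f(x_{k+1})\|\le\|\nabla f(\widehat x_{k+1})\|(1+L_1\|\widehat x_{k+1}-\widehat x_k\|)\le(1+\gamma L_1)\|\nabla f(\widehat x_{k+1})\|$, giving the correct multiplicative factor. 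Your approach to (iii)---passing to the limit directly in \eqref{4.6} at $x_k$---is fine and in fact a bit cleaner than the paper's route through $\widehat x_{k+1}$ and then $x_{k+1}$.
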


Now we are in a position to establish the main result on the performance of Algorithm~1.

\begin{theorem}[\bf convergence rate of the high-order algorithm under generalized metric subregularity]\label{T5.1} Let the iterative sequence $\{x_k\}$ be generated by Algorithm~{\rm 1}, and let $\ox$ be its accumulation point provided that the set $\mathcal{L}(f(x_{k_0}))$ is bounded for some $k_0\in\mathbb N$. In addition to Assumption~{\rm\ref{A3.1}}, suppose that there exists $\eta>0$ such that the generalized metric subregularity condition
\begin{equation}\label{3.118}
\psi\big(d(x,\Theta))\le\|\nabla f(x)\|\;\mbox{ for all }\; x\in B_{\mathbb{R}^m}(\overline{x}, \eta)
\end{equation}
holds with respect to $(\psi,\Theta)$, where $\psi\colon\mathbb{R}_+\to\mathbb{R}_+$ is an admissible function, and where $\Theta$ is the set of second-order stationary points \eqref{2stat}. Define the function $\tau:\mathbb{R}_+\to\mathbb{R}_+$ by 
\begin{equation}\label{558}
\tau(t):=\psi^{-1}\Big(\frac{3(1+\gamma L_1)L_2\ell^{q+1}}{q+1} t^{q+1}\Big),\quad t\in \mathbb{R}_+,
\end{equation}
where $\ell:=\ell_{\overline{\sigma}}$ with $\ell_{\sigma}$ taken from Lemma~{\rm\ref{L3.3}}, and where $\gamma$ is given in \eqref{5.54}. If $\limsup_{t\to 0^+}\frac{\tau(t)}{t}<1$, then $\overline{x}\in \Theta$ and the sequence $\{x_k\}$ converges to $\overline{x}$ as $k\to \infty$ with the convergence rate
\begin{equation}\nonumber
\limsup_{k\to \infty}\frac{\|x_{k}-\overline{x}\|}{{\tau}(\|x_{k-1}-\overline{x}\|)}<\infty.
\end{equation}
\end{theorem}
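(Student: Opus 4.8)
The plan is to recognize the iterates of Algorithm~1, paired with the surrogate sequence $e_k:=\|\widehat x_{k+1}-x_k\|$, as a concrete instance of the abstract algorithmic template governed by Theorem~\ref{P5.2} with target set $\Xi=\Theta$, and then to read off the stated convergence rate verbatim. Thus the whole argument reduces to checking that $\{(x_k,e_k)\}$ obeys the basic assumptions \eqref{H1}--\eqref{H3} together with the three structural hypotheses (i)--(iii) of Theorem~\ref{P5.2}.

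First I would verify \eqref{H1}--\eqref{H3}. The descent condition \eqref{H1} is the monotonicity contained in \eqref{5.38} of Lemma~\ref{L3.1}(i). The surrogate condition \eqref{H4} holds with $c=1+\gamma$ by the first inequality in \eqref{632} combined with $\lim_k\|\widehat x_{k+1}-x_k\|=0$, also from \eqref{632}. For the relative error condition \eqref{H2} I take $w_{k+1}:=\nabla f(x_{k+1})\in\partial f(x_{k+1})$ (legitimate since $f$ is ${\cal C}^2$); then \eqref{0125} gives $\|w_{k+1}\|\le b\,\beta(e_k)$ with $\beta(t)=t^{q+1}$ and $b:=\frac{3(1+\gamma L_1)L_2}{q+1}$. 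The continuity condition \eqref{H3} is immediate from continuity of $f$. Since $\mathcal{L}(f(x_{k_0}))$ is bounded, Lemma~\ref{P10} applies; moreover Lemma~\ref{L3.1}(iii) gives $\Omega\subset\Theta$, and as $\Theta$ is closed with $\Theta\subset\Gamma$, the accumulation point $\overline x$ satisfies $\overline x\in\Omega\subset\Theta\subset\Gamma$. In particular $\overline x\in\Theta$, and by Lemma~\ref{L3.1} we have $f(\overline x)=v=\lim_k f(x_k)\le f(x_0)$, so $\overline x\in\mathcal{L}(f(x_0))\subset{\rm int}(\mathcal{F})$ by Assumption~\ref{A3.1}(2).

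Next I would instantiate Theorem~\ref{P5.2} with $\psi$ as in \eqref{3.118} (understood increasing, so that $\psi^{-1}$ in \eqref{558} is well defined), with $\beta(t)=t^{q+1}$, with Theorem~\ref{P5.2}'s regularity constant equal to $1$, with $b=\frac{3(1+\gamma L_1)L_2}{q+1}$, and with $\ell:=\ell_{\overline\sigma}$ from Lemma~\ref{L3.3}. Hypothesis (i) is then exactly \eqref{3.118}, since $d(0,\partial f(x))=\|\nabla f(x)\|$ for ${\cal C}^2$ functions. Hypothesis (iii) is the standing assumption $\limsup_{t\to0^+}\tau(t)/t<1$, because substituting the above data into $\psi^{-1}\big(1\cdot b\,\beta(\ell t)\big)$ reproduces precisely the function $\tau$ of \eqref{558}. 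The only hypothesis requiring genuine work is (ii), namely $e_k\le\ell\,d(x_k,\Theta)$ for all large $k$. Here $e_k=\|y_{\sigma_k}(x_k)-x_k\|$; for $x_k$ close to $\overline x\in\Theta\cap{\rm int}(\mathcal{F})$, the projection $\widecheck x_k$ of $x_k$ onto $\Theta$ satisfies $\|\widecheck x_k-\overline x\|\le\|\widecheck x_k-x_k\|+\|x_k-\overline x\|=d(x_k,\Theta)+\|x_k-\overline x\|\le 2\|x_k-\overline x\|$, hence $\widecheck x_k\in\mathcal{F}$, so Lemma~\ref{L3.3} yields $\|y_{\sigma_k}(x_k)-x_k\|\le\ell_{\sigma_k}\,d(x_k,\Theta)$; and since $\sigma_k\in[\overline\sigma,2L_2]$ while $\sigma\mapsto\ell_\sigma$ is decreasing by Lemma~\ref{L4.4}(i), we get $\ell_{\sigma_k}\le\ell_{\overline\sigma}=\ell$, which gives (ii). Applying Theorem~\ref{P5.2} now delivers $x_k\to\overline x\in\Theta$ together with $\limsup_{k\to\infty}\|x_k-\overline x\|/\tau(\|x_{k-1}-\overline x\|)<\infty$, which is the claim.

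I expect the main obstacle to be the localization forced by Lemma~\ref{L3.3}: one must keep the projections $\widecheck x_k$ inside $\mathcal{F}$, so all the inequalities above are only available once $x_k$ lies in a small ball about $\overline x$, i.e. for $k$ past some index $k_1$, and one must confirm that this is exactly the situation accommodated by the ``$k\ge k_1$ with $x_k\in B_{\mathbb{R}^m}(\overline x,\eta)$'' proviso of Theorems~\ref{T5.3} and~\ref{P5.2}. Once the correspondence between the abstract constants $(\psi,\beta,b,\ell,c,\tau)$ and the concrete quantities furnished by Lemmas~\ref{L3.1}--\ref{L4.4} is pinned down, no further computation is needed.
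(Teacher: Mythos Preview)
Your proposal follows exactly the paper's route: verify that $\{(x_k,e_k)\}$ with $e_k=\|\widehat x_{k+1}-x_k\|$ satisfies \eqref{H1}--\eqref{H3} via Lemma~\ref{L3.1}, then plug into Theorem~\ref{P5.2} with $\beta(t)=t^{q+1}$, $b=\frac{3(1+\gamma L_1)L_2}{q+1}$, and $\ell=\ell_{\overline\sigma}$, using Lemma~\ref{L3.3} and the monotonicity of $\sigma\mapsto\ell_\sigma$ from Lemma~\ref{L4.4}(i). This is precisely what the paper does.

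There is one wrinkle worth flagging. Hypothesis (ii) of Theorem~\ref{P5.2} is stated \emph{unconditionally}: $e_k\le\ell\,d(x_k,\Theta)$ for \emph{all} $k\ge k_1$, and its proof uses this bound at the index $k-1$ (to pass from $\beta(e_{k-1})$ to $\beta(\ell\,d(x_{k-1},\Theta))$), where no membership of $x_{k-1}$ in $B_{\mathbb{R}^m}(\overline x,\eta)$ is assumed. Your localization argument---$\widecheck x_k\in\mathcal F$ because $\|\widecheck x_k-\overline x\|\le 2\|x_k-\overline x\|$---only delivers the bound for $x_k$ near $\overline x$, and at this stage you do not yet know $x_k\to\overline x$. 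The paper closes this gap differently: since $\{x_k\}\subset\mathcal L(f(x_0))$ is bounded, it lies in a compact set $\mathcal C\subset{\rm int}(\mathcal F)$; then $d(x_k,\Theta)\to 0$ (Lemma~\ref{P10}(iii)) forces $d(\widecheck x_k,\mathcal C)\to 0$, so $\widecheck x_k\in{\rm int}(\mathcal F)$ for all large $k$ \emph{unconditionally}, and Lemma~\ref{L3.3} applies. Your approach can be repaired (e.g., use $\|x_k-x_{k-1}\|\le c\,e_{k-1}\to 0$ to pull $x_{k-1}$ into a slightly larger ball and then invoke Theorem~\ref{T5.3} directly), but the compactness argument is cleaner and matches the stated form of Theorem~\ref{P5.2}(ii).
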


\begin{proof} By Lemma~\ref{L3.1}, we have the inclusion $\Omega\subset \Theta$ together with  all the conditions in \eqref{5.38}--\eqref{0125}.  Let us now verify the estimate
\begin{equation}\label{1900}
\|\widehat x_{k+1}-x_k\|\leq \ell d(x_k, \Theta)
\end{equation}
for any $k\in\mathbb N$ sufficiently large. Having this, we arrive at all of the claimed conclusions of the theorem by applying the abstract convergence results of Theorem~\ref{P5.2} with $\beta(t):=t^{q+1}$ and $e_k:=\|\widehat x_{k+1}-x_k\|$.

To justify \eqref{1900}, let $\widecheck x_k$ be a projection point of $x_k$ to $\Theta$. Thus we deduce from Lemma~\ref{P10}(iii) that
\begin{equation}\label{123}
\lim_{k\to \infty}d(x_k, \Theta)=\lim_{k\to \infty}\|x_k-\widecheck x_k\|=0.
\end{equation}
Moreover, observing that $x_k\in \mathcal{L}(f(x_0))\subset {\rm int}(\mathcal{F})$ for all $k\in\mathbb N$ and  that $\{x_k\}$ remains bounded allows us to find a compact set $\mathcal{C}\subset {\rm int}(\mathcal{F})$ such that $\{x_k\}\subset \mathcal{C}$. Based on $\{x_k\}\subset \mathcal{C}$ and the conditions given by \eqref{123}, we get that $\lim_{k\to \infty}d(\widecheck x_k, \mathcal{C})=0$. Coupling this with the inclusion $\mathcal{C}\subset {\rm int}(\mathcal {F})$ and the compactness of $\mathcal{C}$ indicates that $\widecheck x_k\in {\rm int}(\mathcal{F})$ for all large  $k$. Consequently, there exists $k_1\geq 1$ such that
$x_k, \widecheck x_k\in \mathcal{F}$ whenever $k\geq k_1$. Employing
Lemmas~\ref{L4.4} and \ref{L3.3} together with $\sigma_k\geq \overline{\sigma}>0$ for all $k$ gives us the number $\ell:=\ell_{\overline{\sigma}}$ from Lemma~\ref{L3.3} ensuring the estimate
\begin{equation*}
\|\widehat x_{k+1}-x_k\|\leq \ell d(x_k, \Theta)\;\mbox{ for all large }\;k\in\mathbb N,
\end{equation*}
which justifies \eqref{1900} and thus completes the proof of the theorem.
\end{proof}

As a consequence of Theorem~\ref{T5.1}, we show now that if $\nabla f$ enjoys the (pointwise) exponent metric subregularity property with respect to the second-order stationary set $\Theta$, then the sequence generated by our algorithm converges to a second-order stationary point at least {\em superlinearly} with an explicit convergence order under a suitable condition on the exponent.

\begin{corollary}[\bf superlinear convergence under exponent metric subregularity]\label{corT5.1} Let $f$ satisfy the Assumption~{\rm\ref{A3.1}} with the $q$-th order H\"{o}lder continuous Hessian for some $q \in (0,1]$, let $\{x_k\}$ be generated by 
Algorithm~{\rm 1}, and let the set  $\mathcal{L}(f(x_{k_0}))$  be bounded for some $k_0\in\mathbb N$. Taking $\overline{x}\in \Omega$ and $p>0$, suppose that there exist $c,\eta>0$ such that
\begin{equation*}
d(x, \Theta)^{p}\leq   c\, \|\nabla f(x)\|\;\mbox{ for all }\;x\in B_{\mathbb{R}^m}(\overline{x}, \eta).
\end{equation*}
If $p<q+1$, then $\{x_k\}$ converges at least superlinearly to $\overline{x}\in \Theta$ as $k\to \infty$ with the rate $\frac{q+1}{p}$, i.e.,
\begin{equation*}
\limsup_{k\to \infty}\frac{\|x_{k}-\overline{x}\|}{\|x_{k-1}-\overline{x}\|^{\frac{q+1}{p}}}<\infty.
\end{equation*}
\end{corollary}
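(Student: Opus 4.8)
The plan is to deduce this corollary as a direct specialization of Theorem~\ref{T5.1}. The exponent metric subregularity hypothesis $d(x,\Theta)^p\le c\,\|\nabla f(x)\|$ on $B_{\mathbb{R}^m}(\overline{x},\eta)$ can be rewritten as $\tfrac1c\,d(x,\Theta)^p\le\|\nabla f(x)\|$, so I would set $\psi(t):=\tfrac1c\,t^p$. Since $p>0$, this $\psi$ is an increasing admissible function with $\psi(0)=0$ and $\psi(t)\to0\Rightarrow t\to0$, and by construction the generalized metric subregularity condition \eqref{3.118} holds with respect to $(\psi,\Theta)$ on $B_{\mathbb{R}^m}(\overline{x},\eta)$.

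Next I would compute the associated function $\tau$ from \eqref{558}. Since $\psi^{-1}(s)=(cs)^{1/p}$, one gets
\begin{equation*}
\tau(t)=\psi^{-1}\!\Big(\tfrac{3(1+\gamma L_1)L_2\ell^{q+1}}{q+1}\,t^{q+1}\Big)=C\,t^{\frac{q+1}{p}},\qquad C:=\Big(\tfrac{3c(1+\gamma L_1)L_2\ell^{q+1}}{q+1}\Big)^{1/p},
\end{equation*}
with $\ell=\ell_{\overline{\sigma}}$ and $\gamma$ as in Theorem~\ref{T5.1}. The crucial point is the hypothesis $p<q+1$, which forces the exponent $\tfrac{q+1}{p}>1$; hence $\tfrac{\tau(t)}{t}=C\,t^{\frac{q+1}{p}-1}\to0$ as $t\to0^+$, so $\limsup_{t\to0^+}\tfrac{\tau(t)}{t}=0<1$, and the growth-rate control requirement of Theorem~\ref{T5.1} is met.

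Applying Theorem~\ref{T5.1} then yields $\overline{x}\in\Theta$ together with $\limsup_{k\to\infty}\tfrac{\|x_k-\overline{x}\|}{\tau(\|x_{k-1}-\overline{x}\|)}<\infty$. Substituting $\tau(\|x_{k-1}-\overline{x}\|)=C\,\|x_{k-1}-\overline{x}\|^{\frac{q+1}{p}}$ and absorbing the constant $C$ into the limsup gives exactly
\begin{equation*}
\limsup_{k\to\infty}\frac{\|x_k-\overline{x}\|}{\|x_{k-1}-\overline{x}\|^{\frac{q+1}{p}}}<\infty,
\end{equation*}
which is the claimed superlinear convergence of order $\tfrac{q+1}{p}$. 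There is no genuine obstacle here beyond bookkeeping: the only things to verify carefully are that $\psi(t)=\tfrac1c t^p$ qualifies as an admissible (and increasing) function so that Theorem~\ref{T5.1} applies, and that the strict inequality $p<q+1$ is used precisely to guarantee $\limsup_{t\to0^+}\tau(t)/t<1$; everything else is an immediate substitution into the already-established abstract rate.
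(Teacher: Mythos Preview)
Your proposal is correct and follows essentially the same approach as the paper: set $\psi(t)=c^{-1}t^p$, compute $\tau(t)=O(t^{(q+1)/p})$, and invoke Theorem~\ref{T5.1}. The paper's proof is a one-line version of exactly what you wrote out in detail.
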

\begin{proof}
Choosing $\psi(t):= c^{-1} t^p$, we have $\tau(t)=O(t^{\frac{q+1}{p}})$ and deduce the conclusion from Theorem~\ref{T5.1}.
\end{proof}

Following the discussions in Remark~\ref{remark:4.4}, we can establish global convergence of the sequence $\{x_k\}$ under the KL property and its convergence rate depending on the KL exponent. For brevity, the proof is skipped. 

\begin{proposition}[\bf convergence under the  KL property]\label{corT6.2}
Let $f$ satisfy Assumption~{\rm\ref{A3.1}} with the $q$-th order H\"{o}lder continuous Hessian for some $q \in (0,1]$. Let $\{x_k\}$ be generated by Algorithm~{\rm 1}, and  let the set $\mathcal{L}(f(x_{k_0}))$  be bounded for some $k_0\in\mathbb N$. Take $\overline{x}\in \Omega$ and suppose that $f$ satisfies the KL property at $\overline{x}$. Then the sequence $\{x_k\}$ converges to $\overline{x}$, which is a second-order stationary point of $f$. If furthermore $f$ satisfies the KL property at $\overline{x}$ with the KL exponent $\theta$, then we have the following convergence rate:

$\bullet$ If $\theta=\frac{q+1}{q+2}$, then the  sequence $\{x_k\}$ converges linearly.

$\bullet$ If $\theta>\frac{q+1}{q+2}$, then the  sequence $\{x_k\}$ converges sublinearly with the rate $O(k^{-\frac{(q+1)(1-\theta)}{(q+2)\theta-(q+1)}})$.

$\bullet$ If $\theta<\frac{q+1}{q+2}$, then the  sequence $\{x_k\}$ converges superlinearly with the rate $\frac{q+1}{(q+2)\theta}$. 
\end{proposition}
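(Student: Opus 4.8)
The plan is to recognize that Algorithm~1 is an instance of the abstract iterative scheme studied in Sections~\ref{sec:abst}--\ref{sec:kl} and then invoke the results there; no new machinery is needed. First I would set $e_k:=\|\widehat x_{k+1}-x_k\|$, $\varphi(t):=t^{q+2}$, $\beta(t):=t^{q+1}$, $c:=1+\gamma$ with $\gamma$ given by \eqref{5.54}, $a:=\frac{(q+2)\overline\sigma-2L_2}{2(q+1)(q+2)}$ (positive by the prescribed range of $\overline\sigma$), and $b:=\frac{3(1+\gamma L_1)L_2}{q+1}$. Then Lemma~\ref{L3.1}(i)--(ii) supplies exactly what is required: inequality \eqref{5.38} is the first half of \eqref{H001}, relation \eqref{632} is the second half together with $\lim_k e_k=0$, and \eqref{0125} with $w_{k+1}:=\nabla f(x_{k+1})\in\partial f(x_{k+1})$ is \eqref{H2}; condition \eqref{H3} is automatic since $f$ is continuous, and $\mathcal{L}(f(x_{k_0}))$ is bounded by hypothesis. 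If $e_k=0$ for some $k$, then by Lemma~\ref{L5.2} one has $x_k\in\Theta$, the momentum coefficient vanishes, and the iterates stabilize at $x_k$, so all assertions hold trivially; hence I may assume $e_k\neq0$ for all $k$.

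Next I would run Theorem~\ref{P5.1}. Its regularity hypothesis \eqref{regularity} becomes here $\varphi^{-1}(\beta(s)t)=s^{\frac{q+1}{q+2}}t^{\frac{1}{q+2}}\le\frac{q+1}{q+2}\,s+\frac{1}{q+2}\,t$, i.e.\ the weighted AM--GM inequality, so \eqref{regularity} holds with $(\mu_1,\mu_2)=\bigl(\tfrac{q+1}{q+2},\tfrac{1}{q+2}\bigr)\in(0,1)\times\mathbb{R}_+$. Theorem~\ref{P5.1} then yields $\sum_k\|x_{k+1}-x_k\|<\infty$ and $x_k\to\overline x$, and Lemma~\ref{L3.1}(iii) identifies $\overline x$ as a second-order stationary point; this proves the first assertion of the proposition.

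For the convergence rate I would take $\vartheta(t)=c\,t^{1-\theta}$, placing us exactly in the setting of Remark~\ref{remark:4.4} with $\alpha=q+1$, where the auxiliary functions are $\rho(t)=\kappa\,t^{\frac{(q+1)(1-\theta)}{\theta}}$ and $\xi(t)=\mu\,t^{\frac{q+1}{(q+2)\theta}}$. When $\theta<\frac{q+1}{q+2}$ the exponent of $\xi$ exceeds $1$, so $\xi$ is increasing with $\xi(t)/t\to0$; the one extra ingredient needed for Theorem~\ref{kl-super}(ii) is the estimate $e_k\le\ell\,d(x_k,\Theta)$ for large $k$ with $\ell=\ell_{\overline\sigma}$ of Lemma~\ref{L3.3}, which is precisely inequality \eqref{1900} (whose proof, inside the proof of Theorem~\ref{T5.1}, uses only Assumption~\ref{A3.1} and Lemmas~\ref{L3.1}, \ref{L3.3}, \ref{L4.4} together with the fact that the projections $\widecheck x_k$ eventually lie in $\mathcal{F}$). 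Applying Theorem~\ref{kl-super}(ii) with $\Xi=\Theta$ (admissible since $\Theta$ is closed and $\Omega\subset\Theta\subset\Gamma$) gives $\|x_k-\overline x\|=O\bigl(\xi(\ell\|x_{k-1}-\overline x\|)\bigr)=O\bigl(\|x_{k-1}-\overline x\|^{\frac{q+1}{(q+2)\theta}}\bigr)$, the claimed superlinear rate $\frac{q+1}{(q+2)\theta}$. When $\theta\ge\frac{q+1}{q+2}$ the exponent of $\xi$ is at most $1$, so Theorem~\ref{kl-super} does not apply, and I would reproduce the classical tail-sum argument of \cite[Theorem~2]{ab09}. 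Writing $\Delta_k:=\sum_{i\ge k}e_i$, estimate \eqref{5242} (applied as in the proof of Theorem~\ref{P5.1} with $s_k\equiv0$ and $\vartheta$ in the role of $\xi$) gives, after $p\to\infty$, $\Delta_k\le C_1e_{k-1}+C_2\,\vartheta(f(x_k)-f(\overline x))$, while \eqref{5237} and \eqref{301} give $\vartheta(f(x_k)-f(\overline x))\le\rho(e_{k-1})=\kappa\,e_{k-1}^{\,p}$ with $p:=\frac{(q+1)(1-\theta)}{\theta}$. Since $e_{k-1}=\Delta_{k-1}-\Delta_k\to0$, this yields $\Delta_k\le C(\Delta_{k-1}-\Delta_k)$ when $\theta=\frac{q+1}{q+2}$ (so $p=1$), hence $\Delta_k$ decays geometrically and $\{x_k\}$ converges linearly; and $\Delta_k^{1/p}\le C(\Delta_{k-1}-\Delta_k)$ when $\theta>\frac{q+1}{q+2}$ (so $p<1$), hence by the standard recursion lemma $\Delta_k=O(k^{-p/(1-p)})$ with $\frac{p}{1-p}=\frac{(q+1)(1-\theta)}{(q+2)\theta-(q+1)}$. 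In either case $\|x_k-\overline x\|\le c\,\Delta_k$ by \eqref{256}, which transfers the rate to the iterates.

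The bookkeeping is routine once this identification is in place, so the only genuinely delicate point is transferring information from the surrogate $e_k=\|\widehat x_{k+1}-x_k\|$ to $\|x_k-\overline x\|$ in the presence of the momentum and monotone steps: one must use only the upper bound $\|x_{k+1}-x_k\|\le(1+\gamma)e_k$ together with summability, never a reverse inequality (which the selection $x_{k+1}\in\arg\min\{f(\widehat x_{k+1}),f(\widetilde x_{k+1})\}$ may violate). This is exactly why, in the superlinear regime, the rate is obtained through Theorem~\ref{kl-super}(ii) with target set $\Xi=\Theta$ rather than through part~(i), and why the bound \eqref{1900} relating $e_k$ to $d(x_k,\Theta)$ is the crucial auxiliary estimate.
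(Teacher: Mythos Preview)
Your proposal is correct and follows exactly the route the paper intends: the paper itself omits the proof with the remark ``For brevity, the proof is skipped,'' deferring to Remark~\ref{remark:4.4}, and what you have written is precisely a faithful expansion of that remark in the case $\alpha=q+1$, using Lemma~\ref{L3.1} to verify \eqref{H001}--\eqref{H3}, Theorem~\ref{P5.1} for global convergence, Theorem~\ref{kl-super}(ii) with $\Xi=\Theta$ (via \eqref{1900}) for the superlinear regime, and the tail-sum argument of \cite[Theorem~2]{ab09} for the remaining two regimes. Your remark at the end about why part~(ii) rather than part~(i) of Theorem~\ref{kl-super} is needed is exactly on point.
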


\begin{remark}[\bf comparison with known results]\label{remark:5.1} {\rm To the best of our knowledge, the derived (superlinear/linear/sublinear) convergence rates are new in the literature for high-order regularization methods with momentum steps {\em without any convexity assumptions}. To compare in more detail, consider the case where $p=1$, i.e., the metric subregularity of $\nabla f$ at $\overline{x}$ holds with respect to the second-order stationary set $\Theta$. In this case, when applying the high-order regularization methods with momentum steps to a ${\cal C}^2$-smooth function $f$ with the $q$th-order H\"{o}lder continuous Hessian, we obtain the {\em superlinear convergence} of $\{x_k\}$ with rate $q+1$ to a second-order stationary point. Suppose  further that $q=1$ and no momentum step is used. In that case, we recover the {\em quadratic convergence} rate to a second-order stationary point for the cubic regularized Newton method, but under the weaker assumption of the {\em pointwise metric subregularity} at $\overline{x}$ instead of the stronger error bound condition used in \cite{yzs}}.
\end{remark}

Next we provide two examples with {\em explicit degeneracy} illustrating that our convergence results go beyond the current literature on regularized Newton-type methods of optimization.

\begin{example}[\bf fast and slow convergence without the error bound condition]\label{exa:noEB}
{\rm Consider the function $f(x):=(w^Tx-a)^2(w^Tx-b)^{2p}$ with $x \in \mathbb{R}^m$, $w \in \mathbb{R}^m \backslash \{0\}$, $0 \le a <b$, and $m,p \ge 2$. Direct calculations give us the exact expressions for the first-order stationary set $\Gamma=\bigcup_{r \in \{a,b,\frac{b+pa}{1+p}\}}\big\{x\;\big|\;w^Tx=r\big\}$ and the second-order stationary set $\Theta=\bigcup_{r \in \{a,b\}}\big\{x\;\big|\;w^Tx=r\}$. For any $\overline{x} \in \Theta$, we see that $\nabla^2 f(\overline{x})$ has a zero eigenvalue. Similarly to Example~\ref{example:3.3}, it is easy to check that the {\em uniform metric subregularity/error bound condition} for $\nabla f$ with respect to $\Theta$ {\em fails} in this case.  

Let $\overline{x} \in \Theta=\Theta_1 \cup \Theta_2$, where $\Theta_1=\{x\;|\;w^Tx=a\}$ and $\Theta_2=\{x\;|\; w^Tx=b\}$. If $\overline{x} \in \Theta_1$, then there exists $C>0$ such that $d(x,\Theta)=d(x,\Theta_1) \le C \, \|\nabla f(x)\|$ for all $x \in \mathbb{B}_{\mathbb{R}^m}(\overline{x},(b-a)/4)$, and so the metric subregularity condition of $\nabla f$ holds at $\overline{x}$ with respect to $\Theta$. On the other hand, direct verification shows that $f$ satisfies the KL property at $\overline{x}$ with the KL exponent $1-\frac{1}{2p}$ for $\overline{x} \in \Theta_2$. 

Let $\mathcal{F}$ be a compact set, and let $x_0 \in \mathcal{F}$ be such that Assumption~\ref{A3.1} is satisfied. Then we see that the Hessian of $f$ is Lipschitz continuous on $\mathcal{F}$, i.e., $q=1$. This yields the convergence $x_k\to\overline{x} \in \Theta$ of the iterative sequence in Algorithm~1. If $\overline{x} \in \Theta_1$, then Corollary~\ref{corT5.1} implies that the convergence rate is {\em quadratic}. If $\overline{x} \in \Theta_2$, then it follows from Proposition~\ref{corT6.2} that $\{x_k\}$ converges to $\overline{x}$ {\em sublinearly} with the rate $O(k^{-\frac{2}{2p-3}})$.}
\end{example} 

\begin{example}[\bf linear and sublinear convergence under the KL property]\label{exa:linKL} {\rm For $\gamma >2$ and $M>0$, define the ${\cal C}^2$-smooth function $f: \mathbb{R} \rightarrow \mathbb{R}$ by 
\begin{equation*}
f(x):=|\min\{\max\{x,0\},x+M\}|^{\gamma}=\left\{\begin{array}{ccl}
x^{\gamma} & \mbox{ if } & x>0, \\
0& \mbox{ if } & x \in [-M,0], \\
|x+M|^{\gamma} & \mbox{ if } & x<-M.
\end{array}
\right.
\end{equation*}
We have that $\Gamma=\Theta=[-M,0]$ and that $f$ satisfies the KL property at any $\overline{x} \in \Theta$ with the KL exponent $\theta=1-\frac{1}{\gamma}$. On the other hand, the error bound property of $\nabla f$ with respect to $\Theta$ {\em fails} here. 

Let $\mathcal{F}$ be a compact set, and let $x_0 \in \mathcal{F}$ be such that Assumption~\ref{A3.1} is satisfied. It can be easily verified that $f''$ is $q$th-order H\"{o}lder continuous on $\mathcal{F}$ with $q=\min\{\gamma-2,1\}$. If $\gamma \in (2,3]$, then $\theta=\frac{q+1}{q+2}$, and the sequence of iterates $\{x_k\}$ of Algorithm~1 converges to $\overline{x} \in \Theta$ {\em linearly} by Proposition~\ref{corT6.2}. If $\gamma>3$, then $\theta>\frac{q+1}{q+2}$, and so $\{x_k\}$ converges to $\overline{x} \in \Theta$ {\em sublinearly} with the rate $O(k^{-\frac{2}{\gamma-3}})$. Observe that for $\gamma=3$, the {\em linear convergence} can be indeed {\em achieved}. In this case, we have $q=1$ and $L_2=6$, and then take $\sigma_k:=\sigma \in\big(\frac{2L_2}{q+2}, 2L_2\big] =(4,12]$ for all $k$ and $x_0>0$. Assuming without loss of generality that $\{x_k\}$ is an infinite sequence, we get by the construction of Algorithm~1 that
\begin{equation*}
x_{k+1} \in {\rm argmin}_{y}\big\{ f'(x_k)(y-x_k)+ \frac{1}{2}f''(x_k)(y-x_k)^2+ \frac{\sigma}{6}|y-x_k|^3\big\}.
\end{equation*}
Letting $t:=x_{k+1}-x_k$ gives us $f'(x_k)  + f''(x_k) t + \frac{\sigma}{2} t |t| =0$. Suppose now that $x_k>0$, and then $f'(x_k)>0$ and $f''(x_k)>0$. This implies that $t<0$ and hence $3x_k^2  + 6x_k t - \frac{\sigma}{2} t^2 =0$. Therefore, $t=(\frac{6 }{\sigma}- \frac{\sqrt{6 \sigma+36}}{\sigma}) x_k$ and so $x_{k+1}= \rho x_k>0$ with {$\rho=1-\frac{\sqrt{6 \sigma+36}-6}{\sigma} \in (0,1)$}. Arguing by induction tells us that 
$x_k>0$ and $x_{k+1}=\rho x_k$ for all $k\in\mathbb N$. This yields $x_k \rightarrow \overline{x}=0 \in \Theta$, which confirms that the convergence rate is linear.} 
\end{example}

Finally in this section, we specify the convergence results for the case where the {\em strict saddle point property} holds, which allows us to establish stronger convergence results. Observe to this end that if the objective function $f$ satisfies the strict saddle point property (resp.\ strong strict saddle point property), then Algorithm~1 indeed converges to a local minimizer (resp.\ global minimizer) of $f$ with the corresponding {\em superlinear rate}. In particular, this covers the situations discussed in the illustrative examples of Section~\ref{sec:2nd}.\vspace*{0.05in}

The next corollary addresses a concrete case of the over-parameterized compressed sensing model. In this case, Algorithm~1 converges to a {\em global minimizer} with a {\em quadratic convergence rate} under the strict complementary condition. 

\begin{corollary}[\bf quadratic convergence for over-parameterized compressed sensing models]\label{quadratic}
Consider the $\ell_1$-regularization model \eqref{eq:L1} and the associated over-parameterized smooth optimization problem $\min_{x \in \mathbb{R}^{2m}} f(x)$, where $f:=f_{OP}$ as in \eqref{eq:OP}. Then the iterative sequence $\{x_k\}$ of Algorithm~{\rm 1} converges to a global minimizer $\ox$ of \eqref{eq:OP} as $k\to \infty$, and the following assertions hold: 
\begin{itemize}
\item[{\bf(i)}] Under the validity of the strict complementary condition \eqref{eq:strict_CP}, $\{x_k\}$ converges to $\overline{x}$ quadratically, i.e.,
\begin{equation*}
\limsup_{k\to \infty}\frac{\|x_{k}-\overline{x}\|}{\|x_{k-1}-\overline{x}\|^{2}}<\infty.
\end{equation*}
 
\item[{\bf(ii)}] If the strict complementary condition \eqref{eq:strict_CP} fails, then $\{x_k\}$ converges to $\overline{x}$ with the order $O(k^{-2})$.
\end{itemize}
\end{corollary}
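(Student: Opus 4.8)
The plan is to combine the structural facts about the over-parameterized model $f_{OP}$ established in Example~\ref{ex:sensing} with the convergence machinery of Theorem~\ref{T5.1}, Corollary~\ref{corT5.1}, and Proposition~\ref{corT6.2}. First I would record what is already known: $f_{OP}$ in \eqref{eq:OP} is a polynomial, hence ${\cal C}^2$-smooth with locally Lipschitz Hessian, so Assumption~\ref{A3.1} can be arranged with $q=1$ on any compact set $\mathcal{F}$ with $\mathcal{L}(f(x_0))\subset{\rm int}(\mathcal{F})$, and the level set $\mathcal{L}(f(x_0))$ is bounded because $f_{OP}(u,v)\ge \nu(\|u\|^2+\|v\|^2)$ is coercive; thus $\mathcal{L}(f(x_{k_0}))$ is bounded for every $k_0$ and, in particular, the iterates $\{x_k\}$ of Algorithm~1 are well-defined and bounded. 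Next, as noted in Example~\ref{ex:sensing}, $f_{OP}$ satisfies the \emph{strong} strict saddle point property, so by Lemma~\ref{L3.1}(iii) every accumulation point $\ox\in\Omega$ satisfies $\nabla f_{OP}(\ox)=0$ and $\nabla^2 f_{OP}(\ox)\succeq 0$, i.e.\ $\ox\in\Theta$, and the strong strict saddle property forces $\ox$ to be a global minimizer of \eqref{eq:OP}.

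The heart of the argument is to feed the correct KL exponent into the exponent-metric-subregularity conclusion of Corollary~\ref{coro3.1} and then invoke Corollary~\ref{corT5.1}. For assertion (i): under the strict complementarity condition \eqref{eq:strict_CP}, Example~\ref{ex:sensing} (citing \cite{tk24}) gives that $f_{OP}$ has the KL property at every $\ox\in\Gamma$ with exponent $\theta=1/2$; since $\ox\in\Theta$ and $f_{OP}$ satisfies the strict saddle point property, Corollary~\ref{coro3.1}(i) yields that $\nabla f_{OP}$ has the generalized metric subregularity property at $\ox$ with respect to $(\psi,\Theta)$ for $\psi(t)=t^{\theta/(1-\theta)}=t$, i.e.\ the \emph{standard} (pointwise) metric subregularity. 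Applying Corollary~\ref{corT5.1} with $p=1$ and $q=1$ (so $p<q+1$), we obtain convergence of $\{x_k\}$ to $\ox\in\Theta$ at the superlinear rate $(q+1)/p=2$, which is precisely the quadratic estimate claimed. For assertion (ii): when \eqref{eq:strict_CP} fails, Example~\ref{ex:sensing} (again via \cite{tk24}) gives the KL property at $\ox$ with exponent $\theta=3/4$; here $q=1$, so $\theta=3/4>(q+1)/(q+2)=2/3$, and Proposition~\ref{corT6.2} (second bullet) yields sublinear convergence with rate $O(k^{-(q+1)(1-\theta)/((q+2)\theta-(q+1))})=O(k^{-(2\cdot\frac14)/(3\cdot\frac34-2)})=O(k^{-2})$, which is exactly the stated order.

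A small technical point to handle carefully is that Corollaries~\ref{coro3.1} and \ref{corT5.1} (and Theorem~\ref{T5.1}) are stated \emph{at a given accumulation point} $\ox$, with a prescribed neighborhood $B_{\mathbb{R}^m}(\ox,\eta)$, so I must first fix an accumulation point $\ox$ (which exists by Lemma~\ref{L3.1}(iii)), verify $\ox\in\Theta$ and that the relevant KL/subregularity hypothesis holds there, and only then conclude — at that stage the convergence theorems guarantee that the \emph{whole} sequence converges to that particular $\ox$, so there is no ambiguity in the limit. I also need to check the threshold condition $\limsup_{t\to0^+}\tau(t)/t<1$ required in Theorem~\ref{T5.1}: in case (i) with $\psi(t)=c^{-1}t$ and $q=1$ one gets $\tau(t)=O(t^2)$, so $\tau(t)/t\to 0<1$, and this is automatic; in case (ii) Proposition~\ref{corT6.2} is invoked instead and handles the argument internally. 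The main obstacle is therefore not any deep new estimate but rather the bookkeeping of \emph{which} earlier result supplies \emph{which} exponent: one must be careful that the KL exponents $1/2$ and $3/4$ quoted from \cite{tk24} for $f_{OP}$ are exactly the inputs that make Corollary~\ref{coro3.1} produce $\psi(t)=t$ (resp.\ the Proposition~\ref{corT6.2} threshold comparison tip to the $O(k^{-2})$ branch), and that the strict/strong strict saddle structure of $f_{OP}$ is genuinely needed both to pass from $\Gamma$ to $\Theta$ in the subregularity step and to guarantee that the limit $\ox$ is a global minimizer of \eqref{eq:OP}.
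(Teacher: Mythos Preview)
Your proposal is correct and follows essentially the same route as the paper's proof: verify Assumption~\ref{A3.1} with $q=1$ (polynomial objective, coercive level sets, Lipschitz Hessian on a suitable $\mathcal{F}$), invoke the strong strict saddle property from Example~\ref{ex:sensing} to identify accumulation points as global minimizers, and then apply Corollary~\ref{corT5.1} with $p=q=1$ in case~(i) and Proposition~\ref{corT6.2} with $\theta=3/4$ in case~(ii). The only difference is that you spell out explicitly the passage through Corollary~\ref{coro3.1}(i) to obtain $\psi(t)=t$ from the KL exponent $1/2$, whereas the paper simply quotes the metric subregularity conclusion already recorded in Example~\ref{ex:sensing}.
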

\begin{proof} We know from Section~\ref{sec:2nd} that the strong strict saddle point property is fulfilled for $f=f_{OP}$. To verify (i), recall that if the strict complementary condition holds, then $\nabla f$ satisfies the metric subregularity property at $\overline{x}$ with respect to $\Theta$. Direct verifications show that $f$ is bounded from below by zero, that the set $\mathcal{L}(f(x_0))$ is bounded, and that the Hessian of $f$ is Lipschitz continuous on $\mathcal{F}=\mathcal{L}(f(x_0)) + \mathbb{B}_{\mathbb{R}^{2m}}(0,1)$. By setting $p=q=1$, we see that $\{x_k\}$ converges to $\overline{x} \in \Theta$ in a quadratic rate. Note also that $f$ satisfies the strong strict saddle point property, and thus $\overline{x}$ is indeed a global minimizer.

To prove (ii), observe that if the strict complementary condition fails, then $f$ satisfies the  KL property with the exponent $\theta=\frac{3}{4}$.  Therefore,  by setting $q=1$ in 
Proposition~\ref{corT6.2} we see that $\{x_k\}$ converges to $\overline{x} \in \Theta$ with the rate $O(k^{-\frac{(q+1)(1-\theta)}{(q+2)\theta-(q+1)}})=O(k^{-2})$. Similarly to (i), it follows that $\overline{x}$ is a global minimizer of $f$.
\end{proof}

\begin{remark}[\bf further comparison]\label{rem:comp} {\rm 
During the completion of this paper, we became aware of the preprint \cite{lpq} in which the authors examined the convergence rate of the high-order regularized method without momentum steps for composite optimization problems under the assumption that the Hessian of the smooth part is Lipschitz continuous labeled there as ``strict continuity." Recall that the main focus of our paper is on the development of generalized metric subregularity with applications to fast convergence analysis for general descent methods. This include, in particular, the high-order regularized methods with momentum steps. We also mention that the results obtained in \cite{lpq} cannot cover the setting of Section~\ref{sec:alg}, where we incorporate momentum steps and can also handle the case of H\"{o}lder continuous Hessians.}
\end{remark}\vspace*{-0.2in}

\section{Concluding Remarks}\label{conc}\vspace*{-0.05in}
 
The main contributions of this paper are summarized as follows. 

$\bullet$ We introduce a generalized metric subregularity condition, which serves as an extension of the standard metric subregularity and its H\"{o}lderian version while being strictly weaker than the uniform version (error bound condition) recently employed in the convergence analysis of the cubic regularized Newton method.  We then develop an abstract extended convergence framework that enables us to derive superlinear convergence towards specific target sets (such as the first-order and second-order stationary points) under the introduced generalized metric subregularity condition. Interestingly, this framework also extends the widely used KL convergence analysis, and the derived superlinear convergence rates of this new framework are sharp in the sense that the rate can be attained. 

$\bullet$  We provide easily verifiable sufficient conditions for the generalized metric subregularity with respect to the second-order stationary set under the strict saddle point property. We also demonstrate that several modern practical models appearing in machine learning, statistics, etc., enjoy the generalized metric subregularity property with respect to the second-order stationary set in explicit forms. This includes, e.g., objective functions that arise in the over-parameterized compressed sensing model, best rank-one matrix approximation, and generalized phase retrieval problems. 

$\bullet$ As a major application of generalized metric subregularity and abstract convergence analysis, we derive superlinear convergence results for the new high-order regularized Newton method with momentum. This improves the existing literature by weakening the regularity conditions and extending the analysis to include momentum steps. Furthermore, when applying the cubic regularized Newton methods with momentum steps to solve the over-parameterized compressed sensing model, we obtain global convergence with a quadratic local convergence rate towards a global minimizer under the strict complementarity condition.\vspace{0.05in} 

One important potential future research topic is to extend the abstract convergence framework to cover nonmonotone numerical algorithms. Another major goal of our future research is to extend the obtained second-order results from ${\cal C}^2$-smooth functions to the class of ${\cal C}^{1,1}$ functions \cite{m24}, i.e., continuously differentiable functions with Lipschitzian gradients, or to functions with a composite structure; see, e.g., \cite{NN24}. \vspace*{-0.2in}

\vspace*{-0.2in}

\section{Proofs of Technical Lemmas}\label{appe}
\setcounter{equation}{0}\vspace*{-0.05in}

The main goal of this appendix is to give a detailed proof of 
Lemma~\ref{L3.1}. We split this proof into several technical steps (also called lemmas), which are of their own interest.

\begin{lemma}\label{L4.2} Under Assumption~{\rm\ref{A3.1}}, for any $x\in \mathcal{F}$ we have
\begin{equation}\label{4.9}
\langle \nabla f(x),x- y_{\sigma}(x)\rangle\geq 0.
\end{equation}
If furthermore $\sigma\geq \frac{2L_2}{q+2}$ and $f(x)\leq f(x_0)$, then $y_{\sigma}(x)\in \mathcal{L}(f(x))\subset \mathcal{F}$.
\end{lemma}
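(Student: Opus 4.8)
The plan is to prove the two assertions separately. The inequality \eqref{4.9} is a direct consequence of the optimality characterization in Lemma~\ref{L5.2}, while the inclusion $y_\sigma(x)\in\mathcal{L}(f(x))\subset\mathcal{F}$ needs, in addition, a short bootstrapping argument along the segment joining $x$ and $y_\sigma(x)$ in order to legitimately invoke the descent estimate \eqref{4.33}.

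\textbf{Step 1 (proof of \eqref{4.9}).} Write $z:=y_\sigma(x)$ and $d:=z-x$; here $z$ exists since $f_\sigma(\cdot;x)$ is coercive by $q+2>2$. From the stationarity equation \eqref{4.5} we have $\nabla f(x)=-\nabla^2 f(x)d-\frac{\sigma}{q+1}\|d\|^q d$, so that
\[
\langle\nabla f(x),x-z\rangle=\langle\nabla^2 f(x)d,d\rangle+\frac{\sigma}{q+1}\|d\|^{q+2}.
\]
The second optimality condition \eqref{4.6}, paired with $d$, says precisely that the right-hand side is nonnegative, which gives \eqref{4.9}. This step is routine.

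\textbf{Step 2 (the inclusion).} The inclusion $\mathcal{L}(f(x))\subset\mathcal{F}$ is immediate: since $f(x)\le f(x_0)$, Assumption~\ref{A3.1}(2) yields $\mathcal{L}(f(x))\subset\mathcal{L}(f(x_0))\subset{\rm int}(\mathcal{F})\subset\mathcal{F}$. The substance is the inequality $f(y_\sigma(x))\le f(x)$. The plan is to combine \eqref{4.33} with the relation obtained by pairing \eqref{4.5} with $d$, namely $\langle\nabla^2 f(x)d,d\rangle=-\langle\nabla f(x),d\rangle-\frac{\sigma}{q+1}\|d\|^{q+2}$, and with Step~1 (which gives $\langle\nabla f(x),d\rangle\le0$). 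Concretely, for the scaled point $z_t:=x+td$ one obtains, after substituting these identities into \eqref{4.33},
\[
f(z_t)\le f(x)+\langle\nabla f(x),d\rangle\Big(t-\tfrac{t^2}{2}\Big)+\frac{t^2\|d\|^{q+2}}{q+1}\Big(\frac{L_2 t^q}{q+2}-\frac{\sigma}{2}\Big)\le f(x)\quad\text{for }t\in[0,1],
\]
using $t-t^2/2\ge0$, $\langle\nabla f(x),d\rangle\le0$, $t^q\le1$, and the standing hypothesis $\sigma\ge\frac{2L_2}{q+2}$. Taking $t=1$ would then give $f(y_\sigma(x))\le f(x)$, hence $y_\sigma(x)\in\mathcal{L}(f(x))$.

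\textbf{The main obstacle.} The estimate \eqref{4.33} is available only when \emph{both} endpoints lie in $\mathcal{F}$, so a priori one cannot apply it at $z_t$ before knowing $z_t\in\mathcal{F}$; this circularity is the delicate point. I would resolve it by a connectedness argument: set $T:=\sup\{t\in[0,1]\;|\;z_s\in\mathcal{F}\text{ for all }s\in[0,t]\}$, which is positive because $x=z_0\in{\rm int}(\mathcal{F})$ (as $f(x)\le f(x_0)$). For each $t\in[0,T]$ the whole segment $[x,z_t]$ lies in the convex set $\mathcal{F}$, so the displayed estimate applies and gives $f(z_t)\le f(x)\le f(x_0)$, i.e.\ $z_t\in\mathcal{L}(f(x_0))\subset{\rm int}(\mathcal{F})$. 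Since ${\rm int}(\mathcal{F})$ is open and $t\mapsto z_t$ is continuous, $T<1$ would contradict the maximality of $T$; hence $T=1$, and the conclusion of Step~2 holds at $z_1=y_\sigma(x)$, completing the proof.
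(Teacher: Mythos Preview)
Your proof is correct and follows essentially the same approach as the paper. Step~1 is identical (pair \eqref{4.5} with $d$ and use \eqref{4.6}), and for Step~2 both you and the paper parametrize the segment $z_t=x+t(y_\sigma(x)-x)$, substitute the identity $\langle\nabla^2 f(x)d,d\rangle=-\langle\nabla f(x),d\rangle-\frac{\sigma}{q+1}\|d\|^{q+2}$ into \eqref{4.33}, and bootstrap along the segment using $\mathcal{L}(f(x_0))\subset{\rm int}(\mathcal{F})$. The only cosmetic difference is that the paper argues by contradiction via the first boundary-hitting time $\bar\alpha$, whereas you phrase it as a supremum/connectedness argument; your version has the minor advantage of yielding $y_\sigma(x)\in\mathcal{F}$ and $f(y_\sigma(x))\le f(x)$ simultaneously, while the paper treats the second inequality as a separate ``similarly'' remark.
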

\begin{proof}
It readily follows from \eqref{4.5} that
\begin{equation}\label{4.8}
\langle \nabla f(x), y_{\sigma}(x)-x\rangle+ \langle \nabla^2 f(x)(y_{\sigma}(x)-x), y_{\sigma}(x)-x\rangle +\frac{\sigma}{q+1}\|y_{\sigma}(x)-x\|^{q+2}=0.
\end{equation}
Employing \eqref{4.6}, we obtain
\begin{equation*}
\langle \nabla^2 f(x)(y_{\sigma}(x)-x), y_{\sigma}(x)-x\rangle +\frac{\sigma}{q+1}\|y_{\sigma}(x)-x\|^{q+2}\geq 0,
\end{equation*}
and hence \eqref{4.9} is a direct consequence of \eqref{4.8}.
  
Let now $\sigma\geq \frac{2L_2}{q+2}$ and $f(x) \le f(x_0)$. To see the conclusion, we argue by contradiction and suppose that $y_{\sigma}(x)\notin \mathcal{F}$, which yields $\|x-y_{\sigma}(x)\|>0$. Denote
\begin{equation}\label{4.11}
y_\alpha:=x+\alpha(y_{\sigma}(x)-x), \quad \alpha\in [0,1],
\end{equation} 
and deduce from $\mathcal{L}(f(x_0) \subset {\rm int} (\mathcal{F})$ (by (2) in Assumption~\ref{A3.1}) and $f(x) \le f(x_0)$ that $y_0=x\in {\rm int}(\mathcal{F})$. So the number $\overline{\alpha}:=\min\{\alpha> 0\;|\;y_{\alpha}\in {\rm bd}( \mathcal{F})\}$ is well-defined. It follows that $\overline{\alpha}<1$ and that $y_{\alpha}\in \mathcal{F}$ for all $\alpha\in [0, \overline{\alpha}]$. Consequently, combining Lemma~\ref{L3.2} with  \eqref{4.9}--\eqref{4.11} gives us the estimates
\begin{equation*}
\begin{aligned}
f(y_\alpha)\leq &f(x)+\langle \nabla f(x), y_{\alpha}-x\rangle+ \frac{1}{2}\langle \nabla^2 f(x)(y_\alpha-x), y_\alpha-x\rangle +\frac{L_2}{(q+1)(q+2)}\|y_\alpha-x\|^{q+2}\\
\leq &f(x)+\alpha\langle \nabla f(x), y_{\sigma}(x)-x\rangle+ \frac{\alpha^2}{2}\langle \nabla^2 f(x)(y_{\sigma}(x)-x), y_{\sigma}(x)-x\rangle +\frac{\alpha^{q+2}\sigma}{2(q+1)}\|y_{\sigma}(x)-x\|^{q+2}\\
\leq &f(x)+\left(\alpha-\frac{\alpha^2}{2}\right)\langle \nabla f(x), y_{\sigma}(x)-x\rangle-\left(\frac{\sigma\alpha^2}{2(q+1)}-\frac{\alpha^{q+2}\sigma}{2(q+1)}\right)\|y_{\sigma}(x)-x\|^{q+2}\\
\leq &f(x)-\frac{\sigma\alpha^2(1-\alpha^q)}{2(q+1)}\|y_{\sigma}(x)-x\|^{q+2},
\end{aligned}
\end{equation*}
which imply in turn that $f(y_{\overline{\alpha}})<f(x)$. Therefore, $y_{\overline{\alpha}}\in{\rm int}(\mathcal{F})$, a contradiction  ensuring that $y_{\sigma}(x)\in \mathcal{F}$. In a similar way, we can check that $f(y_{\sigma}(x))\leq f(x)$.
\end{proof}

\begin{lemma}\label{L4.3}
Under Assumption~{\rm\ref{A3.1}}, for any $x\in \mathcal{F}$ with $y_{\sigma}(x)\in \mathcal{F}$ we have
\begin{equation}\label{4.14}
\|\nabla f(y_{\sigma}(x))\|\leq \frac{L_2+\sigma}{q+1}\|y_{\sigma}(x)-x\|^{q+1}.
\end{equation}
\end{lemma}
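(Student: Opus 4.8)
The plan is to combine the optimality condition \eqref{4.5} for the minimizer $y_\sigma(x)$ with the H\"older estimate \eqref{4.27} from Lemma~\ref{L3.2}. The key observation is that \eqref{4.5} lets us rewrite $\nabla f(y_\sigma(x))$ entirely in terms of the displacement $y_\sigma(x)-x$ and the residual of the second-order Taylor expansion of $\nabla f$ at $x$, both of which are controlled by $\|y_\sigma(x)-x\|^{q+1}$.

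First I would write, using \eqref{4.5},
\begin{equation*}
\nabla^2 f(x)(y_\sigma(x)-x) = -\nabla f(x) - \frac{\sigma}{q+1}\|y_\sigma(x)-x\|^q(y_\sigma(x)-x).
\end{equation*}
Then I would start from the identity
\begin{equation*}
\nabla f(y_\sigma(x)) = \big[\nabla f(y_\sigma(x)) - \nabla f(x) - \nabla^2 f(x)(y_\sigma(x)-x)\big] + \big[\nabla f(x) + \nabla^2 f(x)(y_\sigma(x)-x)\big],
\end{equation*}
substitute the expression above for the second bracket so that it collapses to $-\frac{\sigma}{q+1}\|y_\sigma(x)-x\|^q(y_\sigma(x)-x)$, and apply the triangle inequality. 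The first bracket is bounded by $\frac{L_2}{q+1}\|y_\sigma(x)-x\|^{q+1}$ by \eqref{4.27} (this is where $x,y_\sigma(x)\in\mathcal F$ is used), and the norm of the second collapsed term is exactly $\frac{\sigma}{q+1}\|y_\sigma(x)-x\|^{q+1}$. Adding the two contributions gives $\frac{L_2+\sigma}{q+1}\|y_\sigma(x)-x\|^{q+1}$, which is \eqref{4.14}.

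There is essentially no hard step here; the only thing to be careful about is that \eqref{4.27} requires both endpoints $x$ and $y_\sigma(x)$ to lie in $\mathcal F$, which is precisely the hypothesis of the lemma (and which, under the extra conditions $\sigma\geq\frac{2L_2}{q+2}$ and $f(x)\leq f(x_0)$, is guaranteed by Lemma~\ref{L4.2}). So the proof is a short two-line computation once the optimality condition is invoked.
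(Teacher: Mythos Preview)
Your proof is correct and follows exactly the same approach as the paper's: decompose $\nabla f(y_\sigma(x))$ as the Taylor-remainder term plus $\nabla f(x)+\nabla^2 f(x)(y_\sigma(x)-x)$, bound the first via \eqref{4.27} and identify the second via \eqref{4.5}, then apply the triangle inequality. The paper's write-up is slightly more terse but the argument is identical.
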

\begin{proof}
It follows from equation \eqref{4.5} that
\begin{equation*}
\| \nabla f(x)+  \nabla^2 f(x)(y_{\sigma}(x)-x)\|=\frac{\sigma}{q+1}\|y_{\sigma}(x)-x\|^{q+1}.
\end{equation*}
On the other hand, we deduce from \eqref{4.27} that
\begin{equation*}
\|\nabla f(y_{\sigma}(x))-\nabla f(x)-\nabla^2f(x)(y_{\sigma}(x)-x)\|\leq \frac{L_2}{q+1}\|y_{\sigma}(x)-x\|^{q+1}.
\end{equation*}
Combining these two relations verifies the desired inequality \eqref{4.14}.
\end{proof}

\begin{lemma}
Under Assumption~{\rm\ref{A3.1}}, for any $x\in \mathcal{F}$ we have the estimates
\begin{equation}\label{4.18}
\overline{f}_{\sigma}(x)\leq \min_{y\in \mathcal{F}}\left[f(y)+\frac{L_2+\sigma}{(q+1)(q+2)}\|y-x\|^{q+2}\right],
\end{equation}
\begin{equation}\label{4.19}
f(x)-\overline{f}_{\sigma}(x)\ge\frac{q\sigma}{2(q+1)(q+2)}\|y_{\sigma}(x)-x\|^{q+2}.
\end{equation}
Moreover, if $\sigma\geq L_2$  and $f(x)\leq f(x_0)$, then $y_{\sigma}(x)\in \mathcal{F}$ and
\begin{equation}\label{4.20}
f(y_{\sigma}(x))\leq \overline{f}_{\sigma}(x).
\end{equation}
\end{lemma}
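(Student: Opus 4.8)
The plan is to derive all three estimates from the definitions of $f_\sigma(\cdot;x)$ in \eqref{A88} and $\overline f_\sigma(x)$ in \eqref{A89}, combined with two ingredients already available: the H\"olderian second-order estimate \eqref{4.33} from Lemma~\ref{L3.2}, and the optimality characterizations \eqref{4.5}--\eqref{4.6} of $y_\sigma(x)$ from Lemma~\ref{L5.2} together with their algebraic consequence \eqref{4.8}. I would establish the three claims separately and in the stated order.

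For \eqref{4.18}, I would fix an arbitrary $y\in\mathcal F$, use $\overline f_\sigma(x)\le f_\sigma(y;x)$ by definition, and invoke \eqref{4.33} in the form $f(x)+\langle\nabla f(x),y-x\rangle+\tfrac12\langle\nabla^2 f(x)(y-x),y-x\rangle\le f(y)+\tfrac{L_2}{(q+1)(q+2)}\|y-x\|^{q+2}$; adding the regularization term of $f_\sigma(y;x)$ yields $\overline f_\sigma(x)\le f(y)+\tfrac{L_2+\sigma}{(q+1)(q+2)}\|y-x\|^{q+2}$, and taking the minimum over $y\in\mathcal F$ finishes it. For \eqref{4.19}, writing $z:=y_\sigma(x)$ and expanding $\overline f_\sigma(x)=f_\sigma(z;x)$, I would use \eqref{4.8} to eliminate the linear term $\langle\nabla f(x),z-x\rangle$, which reduces $f(x)-\overline f_\sigma(x)$ to $\tfrac12\langle\nabla^2 f(x)(z-x),z-x\rangle+\tfrac{\sigma}{q+2}\|z-x\|^{q+2}$; then \eqref{4.6} gives $\langle\nabla^2 f(x)(z-x),z-x\rangle\ge-\tfrac{\sigma}{q+1}\|z-x\|^{q+2}$, and the elementary identity $\tfrac{1}{q+2}-\tfrac{1}{2(q+1)}=\tfrac{q}{2(q+1)(q+2)}$ produces exactly the asserted constant.

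For the last assertion, observe that $q\in(0,1]$ forces $\tfrac{2L_2}{q+2}<L_2\le\sigma$, so Lemma~\ref{L4.2} applies and already yields $y_\sigma(x)\in\mathcal L(f(x))\subset\mathcal F$ whenever $f(x)\le f(x_0)$. With $z:=y_\sigma(x)\in\mathcal F$ now legitimate, I would invoke \eqref{4.33} in the form $f(z)\le f(x)+\langle\nabla f(x),z-x\rangle+\tfrac12\langle\nabla^2 f(x)(z-x),z-x\rangle+\tfrac{L_2}{(q+1)(q+2)}\|z-x\|^{q+2}$ and subtract from it the expansion $\overline f_\sigma(x)=f_\sigma(z;x)$ to get $f(z)-\overline f_\sigma(x)\le\tfrac{L_2-\sigma}{(q+1)(q+2)}\|z-x\|^{q+2}\le 0$ since $\sigma\ge L_2$. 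The argument as a whole is short; the only points that need care are the bookkeeping in \eqref{4.19} — keeping the exact coefficients through the cancellation so that the final constant is $\tfrac{q\sigma}{2(q+1)(q+2)}$ and not something off by a factor — and the trivial but necessary check $\tfrac{2L_2}{q+2}<L_2$ that licenses the appeal to Lemma~\ref{L4.2}. I do not anticipate any genuine obstacle.
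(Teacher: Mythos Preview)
Your proposal is correct and follows essentially the same approach as the paper. The only minor difference is in the derivation of \eqref{4.19}: the paper uses \eqref{4.8} to eliminate the Hessian quadratic form, arriving at $\tfrac12\langle\nabla f(x),x-y_\sigma(x)\rangle+\tfrac{q\sigma}{2(q+1)(q+2)}\|y_\sigma(x)-x\|^{q+2}$ and then invokes \eqref{4.9} from Lemma~\ref{L4.2}, whereas you eliminate the linear term instead and appeal to \eqref{4.6} directly; since \eqref{4.9} is itself derived from \eqref{4.6} and \eqref{4.8}, the two routes are equivalent and yours is marginally more self-contained.
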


\begin{proof}
Employing the lower bound in \eqref{4.33}, for any $x,y\in \mathcal{F}$, we get
\begin{equation*}
f(x)+\langle \nabla f(x), y-x\rangle+ \frac{1}{2}\langle \nabla^2 f(x)(y-x),y-x\rangle\leq f(y)+\frac{L_2}{(q+1)(q+2)}\|y-x\|^{q+2},
\end{equation*}
and so inequality \eqref{4.18} follows from the construction of $\overline{f}_{\sigma}(x)$. 
Furthermore, we deduce from $y_{\sigma}(x)\in \mathop{\arg\min}_{y \in \mathbb{R}^m}\overline{f}_{\sigma}(x)$,   \eqref{4.8}, and Lemma~\ref{L4.2} that
\begin{equation*}
\begin{aligned}
f(x)-\overline{f}_{\sigma}(x)
&=\langle \nabla f(x), x-y_{\sigma}(x)\rangle- \frac{1}{2}\langle \nabla^2 f(x)(y_{\sigma}(x)-x),y_{\sigma}(x)-x\rangle-\frac{\sigma}{(q+1)(q+2)}\|y_{\sigma}(x)-x\|^{q+2}\\
&=\frac{1}{2}\langle \nabla f(x), x-y_{\sigma}(x)\rangle +\frac{q\sigma}{2(q+1)(q+2)}\|y_{\sigma}(x)-x\|^{q+2}\ge\frac{q\sigma}{2(q+1)(q+2)}\|y_{\sigma}(x)-x\|^{q+2}.
\end{aligned}
\end{equation*}
Finally, assuming $\sigma\geq L_2$ and $f(x)\le f(x_0)$ gives us by 
Lemma~\ref{L4.2} that $y_{\sigma}(x)\in \mathcal{F}$, and thus \eqref{4.20} follows from the upper bound in \eqref{4.33}.
\end{proof}

\begin{lemma}\label{L4.5}
Under Assumption~{\rm\ref{A3.1}}, for any $x\in \mathcal{F}$ with $y_{\sigma}(x)\in \mathcal{F}$ we have
\begin{equation}\label{618}
\lambda_{\min}\Big( \nabla^2f(y_{\sigma}(x))\Big)\geq -\left(\frac{\sigma}{q+1}+L_2\right)\|y_{\sigma}(x)-x\|^{q}.
\end{equation}
\end{lemma}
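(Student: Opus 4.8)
The plan is to estimate the smallest eigenvalue of $\nabla^2 f(y_\sigma(x))$ by comparing it with the second-order optimality condition \eqref{4.6} for $y_\sigma(x)$. Writing $z := y_\sigma(x)$, condition \eqref{4.6} reads $\nabla^2 f(x) + \frac{\sigma}{q+1}\|z-x\|^q I_m \succeq 0$, i.e. for every unit vector $d \in \mathbb{R}^m$ we have $\langle \nabla^2 f(x) d, d\rangle \ge -\frac{\sigma}{q+1}\|z-x\|^q$. The idea is then to bound $\langle \nabla^2 f(z) d, d\rangle$ from below by $\langle \nabla^2 f(x) d, d\rangle$ minus the error term coming from the H\"older continuity of the Hessian.

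Concretely, first I would fix an arbitrary unit vector $d$ and write
\begin{equation*}
\langle \nabla^2 f(z) d, d\rangle = \langle \nabla^2 f(x) d, d\rangle + \langle (\nabla^2 f(z) - \nabla^2 f(x)) d, d\rangle.
\end{equation*}
Next I would estimate the second term using part (3) of Assumption~\ref{A3.1}, which (since $x, z \in \mathcal{F}$) gives $\|\nabla^2 f(z) - \nabla^2 f(x)\| \le L_2 \|z-x\|^q$, hence $|\langle (\nabla^2 f(z) - \nabla^2 f(x)) d, d\rangle| \le L_2 \|z-x\|^q$. Combining this with the eigenvalue bound from \eqref{4.6} yields
\begin{equation*}
\langle \nabla^2 f(z) d, d\rangle \ge -\frac{\sigma}{q+1}\|z-x\|^q - L_2 \|z-x\|^q = -\Big(\frac{\sigma}{q+1} + L_2\Big)\|z-x\|^q.
\end{equation*}
Since $d$ is arbitrary, taking the infimum over unit vectors $d$ of the left-hand side gives exactly $\lambda_{\min}(\nabla^2 f(z)) \ge -(\frac{\sigma}{q+1}+L_2)\|z-x\|^q$, which is \eqref{618}.

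There is essentially no serious obstacle here: the proof is a one-line combination of the second-order optimality condition \eqref{4.6} (already recorded in Lemma~\ref{L5.2}) and the H\"older continuity of the Hessian (Assumption~\ref{A3.1}(3)). The only mild points to be careful about are that $z = y_\sigma(x) \in \mathcal{F}$ is part of the hypothesis (so the H\"older estimate is applicable on the segment joining $x$ and $z$, or at least at the two endpoints, which is all that is needed since the bound on $\|\nabla^2 f(z) - \nabla^2 f(x)\|$ only uses the two points), and that the Rayleigh quotient characterization $\lambda_{\min}(M) = \min_{\|d\|=1} \langle M d, d\rangle$ for symmetric $M$ lets us pass freely between the quadratic-form inequality and the eigenvalue inequality.
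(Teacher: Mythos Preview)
Your proof is correct and follows essentially the same approach as the paper: combine the second-order optimality condition \eqref{4.6} with the H\"older continuity of $\nabla^2 f$ from Assumption~\ref{A3.1}(3). The paper phrases the argument in terms of the Loewner ordering $\nabla^2 f(z) \succeq \nabla^2 f(x) - L_2\|z-x\|^q I_m \succeq -(\frac{\sigma}{q+1}+L_2)\|z-x\|^q I_m$, which is equivalent to your Rayleigh-quotient version.
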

\begin{proof}
The H\"{o}lder continuity of $\nabla^2f(x)$ and inequality \eqref{4.6} tell us that
\begin{equation*}
\begin{aligned}
\nabla^2f(y_{\sigma}(x))\succeq \nabla^2f(x)-L_2\|y_{\sigma}(x)-x\|^{q}I_m\succeq -\left(\frac{\sigma}{q+1}+L_2\right)\|y_{\sigma}(x)-x\|^{q}I_m,
\end{aligned}
\end{equation*}
which clearly yields the estimate in \eqref{618}.
\end{proof}

\begin{lemma}
Let Assumption~{\rm\ref{A3.1}} hold, and let the sequences $\{x_k\}$ and $\{\widehat x_k\}$ be taken from Algorithm~{\rm 1} with $\overline{\sigma} >\frac{2L_2}{q+2}$. Then for all $k\in\mathbb N$ we have the estimates
\begin{equation}\label{6.18}
f(\widehat x_{k+1})-f(x_k)\leq -\frac{(q+2)\overline{\sigma}-2L_2}{2(q+1)(q+2)}\|\widehat x_{k+1}-x_k\|^{q+2},
\end{equation}
\begin{equation}\label{6.19}
\sum_{i=0}^{k}\|\widehat x_{k+1}-x_k\|^{q+2}\leq \frac{2(q+1)(q+2)}{(q+2)\overline{\sigma}-2L_2} (f(x_0)-f^*),
\end{equation}
\begin{equation}\label{6.20}
\|\nabla f(\widehat x_{k+1})\|\leq \frac{3L_2}{q+1}\|\widehat x_{k+1}-x_k\|^{q+1},
\end{equation}
\begin{equation}\label{6.21}
\lambda_{\min}(\nabla^2f(\widehat x_{k+1}))\geq -\frac{(q+3)L_2}{q+1}\|\widehat x_{k+1}-x_k\|^{q}.
\end{equation}
\end{lemma}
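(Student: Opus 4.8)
The plan is to establish the four estimates \eqref{6.18}--\eqref{6.21} by specializing the general lemmas proved earlier in the appendix to the iterates produced in the regularization step \eqref{116} of Algorithm~1, and then to chain them together for \eqref{6.19}. The first order of business is to record that $\widehat x_{k+1}=y_{\sigma_k}(x_k)$ with $\sigma_k\in[\overline\sigma,2L_2]$, and that by induction on $k$ (using the monotone step \eqref{5.27} which can only decrease the objective) we have $f(x_k)\le f(x_0)$ for all $k$; hence $x_k\in\mathcal L(f(x_0))\subset\mathrm{int}(\mathcal F)\subset\mathcal F$, and Lemma~\ref{L4.2} applies to give $y_{\sigma_k}(x_k)\in\mathcal L(f(x_k))\subset\mathcal F$ once we check $\sigma_k\ge\frac{2L_2}{q+2}$, which holds since $\overline\sigma>\frac{2L_2}{q+2}$.

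First I would prove \eqref{6.18}. Combine \eqref{4.19} (with $x=x_k$, $\sigma=\sigma_k$), which gives $f(x_k)-\overline f_{\sigma_k}(x_k)\ge\frac{q\sigma_k}{2(q+1)(q+2)}\|\widehat x_{k+1}-x_k\|^{q+2}$, with \eqref{4.20}, which gives $f(\widehat x_{k+1})\le\overline f_{\sigma_k}(x_k)$ once $\sigma_k\ge L_2$. However $\sigma_k$ need only be $\ge\overline\sigma$, and $\overline\sigma$ may be strictly below $L_2$, so \eqref{4.20} is not directly available; instead I would bound $f(\widehat x_{k+1})$ directly from the upper estimate in \eqref{4.33} with $y=\widehat x_{k+1}$, $x=x_k$, then use the stationarity identity \eqref{4.8} and the inequality $\langle\nabla f(x_k),x_k-\widehat x_{k+1}\rangle\ge0$ from \eqref{4.9} to cancel the gradient and Hessian terms, arriving at $f(\widehat x_{k+1})-f(x_k)\le\big(\frac{L_2}{(q+1)(q+2)}-\frac{\sigma_k}{2(q+1)}\big)\|\widehat x_{k+1}-x_k\|^{q+2}$; since $\sigma_k\ge\overline\sigma$ the bracket is at most $-\frac{(q+2)\overline\sigma-2L_2}{2(q+1)(q+2)}$, which is \eqref{6.18}. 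Then \eqref{6.19} follows by telescoping \eqref{6.18} from $i=0$ to $k$ and using $f(x_{i+1})\le f(\widehat x_{i+1})$ (monotone step) together with the lower bound $f(x_{k+1})\ge f^*$.

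For \eqref{6.20}, I would apply Lemma~\ref{L4.3} with $x=x_k$: it gives $\|\nabla f(\widehat x_{k+1})\|\le\frac{L_2+\sigma_k}{q+1}\|\widehat x_{k+1}-x_k\|^{q+1}$, and since $\sigma_k\le 2L_2$ the coefficient is at most $\frac{3L_2}{q+1}$, yielding \eqref{6.20}. For \eqref{6.21}, I would apply Lemma~\ref{L4.5} with $x=x_k$: it gives $\lambda_{\min}(\nabla^2 f(\widehat x_{k+1}))\ge-\big(\frac{\sigma_k}{q+1}+L_2\big)\|\widehat x_{k+1}-x_k\|^q$, and again using $\sigma_k\le 2L_2$ the coefficient is bounded by $\frac{2L_2}{q+1}+L_2=\frac{(q+3)L_2}{q+1}$, which is \eqref{6.21}. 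The only mild subtlety — and the step I would be most careful about — is the very first one, namely verifying that the inductive hypothesis $f(x_k)\le f(x_0)$ legitimately keeps every relevant point inside $\mathcal F$ so that Lemmas~\ref{L4.2}--\ref{L4.5} are applicable at each iteration; once this bookkeeping is in place, the rest is a direct substitution of $\sigma_k\in[\overline\sigma,2L_2]$ into the already-proven inequalities plus one telescoping sum.
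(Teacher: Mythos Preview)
Your proposal is correct and follows essentially the same route as the paper. The only cosmetic difference is in the derivation of \eqref{6.18}: the paper adds and subtracts $\frac{\sigma_k}{(q+1)(q+2)}\|\widehat x_{k+1}-x_k\|^{q+2}$ so as to invoke \eqref{4.19} directly, whereas you unpack \eqref{4.19} into its ingredients \eqref{4.8} and \eqref{4.9}; both computations land on the same coefficient $-\frac{(q+2)\sigma_k-2L_2}{2(q+1)(q+2)}$, and the remaining three estimates plus the telescoping for \eqref{6.19} are handled identically.
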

\begin{proof}
First we verify \eqref{6.18}. It follows from Lemma~\ref{L3.2}, \eqref{A89}, and \eqref{4.19} that
\begin{equation*}
\begin{aligned}
&f(\widehat x_{k+1})\le f(x_k)+\langle \nabla f(x_k), \widehat x_{k+1}-x_k\rangle\\
&+ \frac{1}{2}\langle \nabla^2 f(x_k)(\widehat x_{k+1}-x_k), \widehat x_{k+1}-x_k\rangle+\frac{L_2}{(q+1)(q+2)}\|\widehat x_{k+1}-x_k\|^{q+2}\\
&=f(x_k)+\langle \nabla f(x_k), \widehat x_{k+1}-x_k\rangle+ \frac{1}{2}\langle \nabla^2 f(x_k)(\widehat x_{k+1}-x_k), \widehat x_{k+1}-x_k\rangle\\
&+\frac{\sigma_k}{(q+1)(q+2)}\|\widehat x_{k+1}-x_k\|^{q+2}+\frac{L_2-\sigma_k}{(q+1)(q+2)}\|\widehat x_{k+1}-x_k\|^{q+2}\\
&\leq f(x_k)-\frac{q\sigma_k}{2(q+1)(q+2)}\|\widehat x_{k+1}-x_k\|^{q+2}+\frac{L_2-\sigma_k}{(q+1)(q+2)}\|\widehat x_{k+1}-x_k\|^{q+2}\\
&=f(x_k)-\frac{(q+2)\sigma_k-2L_2}{2(q+1)(q+2)}\|\widehat x_{k+1}-x_k\|^{q+2}\le f(x_k)-\frac{(q+2)\overline{\sigma}-2L_2}{2(q+1)(q+2)}\|\widehat x_{k+1}-x_k\|^{q+2},
\end{aligned}
\end{equation*}
which justifies \eqref{6.18}. To prove \eqref{6.19}, we use \eqref{5.27} and \eqref{6.18} to get
\begin{equation*}
\begin{aligned}
f(x_0)-f^*\geq &\sum_{i=0}^{k}[f(x_i)-f(x_{i+1})]\geq\sum_{i=0}^{k}[f(x_i)-f(\widehat x_{i+1})]\\
\geq & \sum_{i=0}^{k}\frac{(q+2)\overline{\sigma}-2L_2}{2(q+1)(q+2)}\|\widehat x_{i+1}-x_i\|^{q+2},
\end{aligned}
\end{equation*}
which yields \eqref{6.19}. The inequalities in \eqref{6.20} and \eqref{6.21} follow from Lemmas~\ref{L4.3} and \ref{L4.5}, respectively.
\end{proof}

The next estimates follow from \eqref{5.27} and \eqref{5.026} being useful for our subsequent analysis:
\begin{equation}\label{6.26}
\begin{aligned}
\|x_{k+1}-\widehat x_{k+1}\|\leq &\max\big\{ \|\widehat x_{k+1}-\widehat x_{k+1}\|, \|\widetilde x_{k+1}-\widehat x_{k+1}\|\big\}\\
=&\|\widetilde x_{k+1}-\widehat x_{k+1}\|\leq \beta_{k+1} \|\widehat x_{k+1}-\widehat x_{k}\|.
\end{aligned}
\end{equation}

\begin{lemma}\label{L6.6} Under Assumption~{\rm\ref{A3.1}}, the sequence $\{\widehat x_k\}$ generated by Algorithm~{\rm 1} satisfies
\begin{equation*}
\|\widehat x_{k+1}-\widehat x_{k}\|\leq \frac{1}{1-\zeta} \left[\frac{2(q+1)(q+2)}{(q+2)\overline{\sigma}-2L_2} (f(x_0)-f^*)\right]^{\frac{1}{q+2}}=:\gamma.
\end{equation*}
\end{lemma}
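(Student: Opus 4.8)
The plan is to bound $\|\widehat x_{k+1}-\widehat x_k\|$ by splitting the displacement at the two endpoints of the regularization step. First I would write
\[
\|\widehat x_{k+1}-\widehat x_{k}\|\le\|\widehat x_{k+1}-x_k\|+\|x_k-\widehat x_k\|,
\]
and then estimate each summand. The first term $\|\widehat x_{k+1}-x_k\|$ is controlled directly by \eqref{6.19}: since each individual term in that finite sum of $(q+2)$-th powers is dominated by the whole sum, we get
\[
\|\widehat x_{k+1}-x_k\|\le\Big[\tfrac{2(q+1)(q+2)}{(q+2)\overline\sigma-2L_2}\,(f(x_0)-f^*)\Big]^{\frac{1}{q+2}}.
\]
For the second term I would use the relation between $x_k$ and $\widehat x_k$ coming from the momentum/monotone steps: by \eqref{6.26} (shifted one index) together with \eqref{5.025}, we have $\|x_k-\widehat x_k\|\le\beta_k\|\widehat x_k-\widehat x_{k-1}\|\le\zeta\|\widehat x_k-\widehat x_{k-1}\|$, and again $\|\widehat x_k-\widehat x_{k-1}\|\le\|\widehat x_k-x_{k-1}\|+\|x_{k-1}-\widehat x_{k-1}\|$, so one is naturally led to an inductive/recursive bound on $d_k:=\|\widehat x_{k+1}-\widehat x_k\|$.

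The clean way to organize this is to observe that $\|x_k-\widehat x_k\|\le\zeta\,d_{k-1}$, hence
\[
d_k=\|\widehat x_{k+1}-\widehat x_k\|\le\|\widehat x_{k+1}-x_k\|+\zeta\,d_{k-1}\le C+\zeta\,d_{k-1},
\qquad C:=\Big[\tfrac{2(q+1)(q+2)}{(q+2)\overline\sigma-2L_2}(f(x_0)-f^*)\Big]^{\frac{1}{q+2}}.
\]
For $k=0$ we have $x_0=\widehat x_0$, so $d_0=\|\widehat x_1-x_0\|\le C$. An easy induction then gives $d_k\le C(1+\zeta+\zeta^2+\cdots+\zeta^k)\le\frac{C}{1-\zeta}$ for all $k\in\mathbb N$, since $\zeta\in[0,1)$. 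This is exactly the claimed bound $\|\widehat x_{k+1}-\widehat x_k\|\le\frac{1}{1-\zeta}\big[\frac{2(q+1)(q+2)}{(q+2)\overline\sigma-2L_2}(f(x_0)-f^*)\big]^{1/q+2}=\gamma$.

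The only mildly delicate point is making sure the geometric telescoping is set up with the right base case and that $\|x_k-\widehat x_k\|\le\zeta d_{k-1}$ is valid for every $k\ge1$; this needs $\beta_k\le\zeta$, which is built into \eqref{5.025}, and it needs \eqref{6.26} shifted to index $k-1$, which requires $k-1\ge0$ — consistent since $d_{-1}$ never appears (the base case uses $x_0=\widehat x_0$ instead). There is no real obstacle here: everything reduces to a finite geometric series once \eqref{6.19} is invoked to bound a single summand, so I would not expect this lemma to require more than the few lines sketched above.
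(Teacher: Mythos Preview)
Your proof is correct and follows essentially the same approach as the paper: both set up the recursion $\|\widehat x_{k+1}-\widehat x_k\|\le\|\widehat x_{k+1}-x_k\|+\zeta\|\widehat x_k-\widehat x_{k-1}\|$ via the triangle inequality and the momentum bound $\beta_k\le\zeta$, invoke \eqref{6.19} to control $\|\widehat x_{i+1}-x_i\|$, and sum the resulting geometric series starting from the base case $x_0=\widehat x_0$. The only cosmetic difference is that the paper unrolls the recursion into $\sum_{i=0}^k\zeta^{k-i}\|\widehat x_{i+1}-x_i\|$ and then pulls out the maximum, whereas you bound each step term by $C$ first and induct; the arguments are equivalent.
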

\begin{proof}
For any index $i\geq 1$, we deduce from \eqref{6.26} and the condition 
$\beta_{k+1}\le\zeta$ for all $k\geq 0$ that
\begin{equation}\label{6.28}
\begin{aligned}
\|\widehat x_{i+1}-\widehat x_{i}\|\leq&   \|\widehat x_{i+1}- x_i\|+  \|x_i-\widehat x_i\|
\le \|\widehat x_{i+1}- x_i\|+  \beta_i\|\widehat x_i-\widehat x_{i-1}\|
\le\|\widehat x_{i+1}- x_i\|+  \zeta\|\widehat x_i-\widehat x_{i-1}\|.
\end{aligned}
\end{equation}
Recursively applying \eqref{6.28}, we deduce from $\widehat x_0=x_0$ that
\begin{equation*}
\begin{aligned}
\|\widehat x_{k+1}-\widehat x_k\|\leq &\zeta^k  \|\widehat x_{1}-\widehat x_{0}\|+\sum_{i=1}^k\zeta^{k-i}\|\widehat x_{i+1}- x_i\|
\le \sum_{i=0}^k\zeta^{k-i}\|\widehat x_{i+1}- x_i\|
\end{aligned}
\end{equation*}
whenever $k\in\mathbb N$. Then it follows from \eqref{6.19} that
\begin{equation*}
\begin{aligned}
\|\widehat x_{k+1}-\widehat x_k\|&\leq\max_{i\in\{0, \cdots,k\}}\|\widehat x_{i+1}- x_i\|\sum_{i=0}^k\zeta^{k-i}
\le \frac{1}{1-\zeta}\max_{i\in\{0, \cdots,k\}}\|\widehat x_{i+1}- x_i\|\\
&\le \frac{1}{1-\zeta}\left[\frac{2(q+1)(q+2)}{(q+2)\overline{\sigma}-2L_2} (f(x_0)-f^*)\right]^{\frac{1}{q+2}},
\end{aligned}
\end{equation*}
which justified the claim of the lemma.
\end{proof}

Now we are ready to finalize the {\bf Proof of Lemma \ref{L3.1}}:\\[1ex]
To verify assertion (i) therein, it suffices to show that $\{f(x_k)\}$ is a decreasing sequence having a lower bound. By Assumption~\ref{A3.1}, $f$ is bounded from below and so is $\{f(x_k)\}$. Employing \eqref{5.27} and \eqref{6.18} yields
\begin{equation}\label{6.31}
f(x_{k+1})\leq f(\widehat x_{k+1})\leq f(x_k)-\frac{(q+2)\overline{\sigma}-2L_2}{2(q+1)(q+2)}\|\widehat x_{k+1}-x_k\|^{q+2}
\end{equation}
and allows us to deduce that $\{f(x_k)\}$ converges to some $v$.\\[1ex]
To verify (ii), we begin with using \eqref{6.26}, $\beta_{k+1}\leq \|\widehat x_{k+1}-x_k\|$, and Lemma~\ref{L6.6} to get
\begin{equation*}
\begin{aligned}
&\|x_{k+1}-x_k\|\leq\|x_{k+1}-\widehat x_{k+1}\|+\|\widehat x_{k+1}-x_k\|
\le\beta_{k+1}\|\widehat x_{k+1}-\widehat x_k\|+\|\widehat x_{k+1}-x_k\|\\
&\leq\|\widehat x_{k+1}-x_k\|(\|\widehat x_{k+1}-\widehat x_k\|+1)
\leq(1+\gamma)\|\widehat x_{k+1}-x_k\|,
\end{aligned}
\end{equation*}
which tells us together with \eqref{6.19} that
\begin{equation}\label{6.33}
\lim_{k\to \infty}\|\widehat x_{k+1}-x_k\|=0.
\end{equation}
Combining further Assumption~\ref{A3.1} and Lemma~\ref{L6.6} with \eqref{5.025}, \eqref{6.26}, and \eqref{6.20}, we arrive at
\begin{equation}\label{6.34}
\begin{aligned}
&\|\nabla f(x_{k+1})\|\le\|\nabla f(\widehat x_{k+1})\|+\|\nabla f(\widehat x_{k+1})-\nabla f(x_{k+1})\|\\
&\leq\|\nabla f(\widehat x_{k+1})\|+L_1\|x_{k+1}-\widehat x_{k+1}\|
\leq\|\nabla f(\widehat x_{k+1})\|+L_1\beta_{k+1} \|\widehat x_{k+1}-\widehat x_{k}\|\\
&\le\|\nabla f(\widehat x_{k+1})\|(1+L_1\|\widehat x_{k+1}-\widehat x_{k}\|)\|\le\frac{3(1+\gamma L_1)L_2}{q+1}\|\widehat x_{k+1}-x_k\|^{q+1},
\end{aligned}
\end{equation}
and therefore verify assertion (ii).\\[1ex]
To verify the final assertion (iii), note that \eqref{6.31} yields $\{x_k\}\subset \mathcal{L}(f(x_0))$. Recalling that $\mathcal{L}(f(x_{k_0}))$ is bounded for some $k_0\in\mathbb N$, we conclude that $\{x_k\}$ is bounded as well, which ensures that $\Omega\ne\emptyset$. Pick any $\overline{x}\in \Omega$ and observe that justifying \eqref{5.53} requires to bound the minimum eigenvalue of the Hessian. Recalling that $|\lambda_{\min}(C)-\lambda_{\min}(D)|\le\|C-D\|$ holds for any real symmetric matrices $C$ and $D$, we have
\begin{equation*}
\begin{aligned}
\lambda_{\min}(\nabla^2f( x_{k+1}))\geq \lambda_{\min}(\nabla^2f(\widehat x_{k+1}))-\|\nabla^2 f(x_{k+1})-\nabla^2 f(\widehat x_{k+1})\|.
\end{aligned}
\end{equation*}
This together with Assumption~\ref{A3.1}, Lemma~\ref{L6.6}, \eqref{6.21}, \eqref{6.26}, and $\beta_{k+1}\leq \|\widehat x_{k+1}-x_k\|$ tells us that
\begin{equation*}
\begin{aligned}
&\lambda_{\min}(\nabla^2f( x_{k+1}))\ge\lambda_{\min}(\nabla^2f(\widehat x_{k+1}))-\|\nabla^2 f(x_{k+1})-\nabla^2 f(\widehat x_{k+1})\|\ge\lambda_{\min}(\nabla^2f(\widehat x_{k+1}))-L_2 \|x_{k+1}-\widehat x_{k+1}\|^q\\
&\ge\lambda_{\min}(\nabla^2f(\widehat x_{k+1}))-L_2 \beta_{k+1}^q \|\widehat x_{k+1}-\widehat x_{k}\|^q\ge\lambda_{\min}(\nabla^2f(\widehat x_{k+1}))-L_2\gamma^q \|\widehat x_{k+1}-x_k\|^q\\
&\ge-\frac{(q+3)L_2}{q+1}\|\widehat x_{k+1}-x_k\|^{q}-L_2\gamma^q \|\widehat x_{k+1}-x_k\|^q
=-\left(\frac{q+3}{q+1}+\gamma^q\right)L_2\|\widehat x_{k+1}-x_k\|^{q}.
\end{aligned}
\end{equation*}
Unifying the latter with \eqref{6.33} and \eqref{6.34} leads us to the inequalities
\begin{equation*}
\|\nabla f(\overline{x})\|\leq \limsup_{k\to \infty} \|\nabla f(x_{k+1})\|\leq 0\quad \mbox{and}\quad   \lambda_{\min}(\nabla^2f(\overline{x}))\geq \liminf_{k\to \infty}\lambda_{\min}(\nabla^2f( x_{k+1}))\geq 0,
\end{equation*}
which show that $\nabla f(\overline{x})=0$ and $\nabla^2 f(\overline{x})\succeq 0$. Employing finally Lemma~\ref{L3.1}(i), we obtain $f(\overline{x})=v$ and therefore complete the proof of the lemma.
\end{document}